\numberwithin{equation}{section}
\newtheorem{Theorem}{Theorem}[section]
\newtheorem{Proposition}[Theorem]{Proposition}
\newtheorem{Lemma}[Theorem]{Lemma}
\theoremstyle{definition}
\newtheorem{Definition}[Theorem]{Definition}
\newtheorem{Example}[Theorem]{Example}
\newtheorem{Remark}[Theorem]{Remark}
\newcommand{\wi}{\widetilde}
\newcommand{\wih}{\widehat}
\DeclareMathOperator{\dist}{dist}
\DeclareMathOperator{\Bs}{Bs}
\DeclareMathOperator{\ord}{ord}
\newcommand{\cali}[1]{\mathscr{#1}}
\newcommand{\cO}{\cali{O}} 
\newcommand{\cI}{\cali{I}}
\newcommand{\cC}{\cali{C}}
\newcommand{\field}[1]{\mathbb{#1}}
\newcommand{\R}{\field{R}}
\newcommand{\C}{\field{C}}
\newcommand{\N}{\field{N}}
\newcommand{\Q}{\field{Q}}
\newcommand{\B}{\field{B}}
\renewcommand{\P}{\field{P}}
\newcommand{\X}{\field{X}}
\newcommand{\reg}{\mathrm{reg} }
\newcommand{\sing}{\mathrm{sing} }
\newcommand{\FS}{{\rm FS}}
\newcommand{\eq}{\mathrm{eq} }
\newcommand{\req}{\mathrm{req} }
\newcommand{\PSH}{{\rm PSH}}
\newcommand{\Vol}{{\rm Vol}}
\newcommand{\ddbar}{{\partial\overline\partial}}
\newcommand{\ddc}{{dd^c}}
\newcommand{\comment}[1]{}
\newcommand{\ke}{\nobreak\hspace{.06em plus .03em}}
\newcommand*{\LargerCdot}{\raisebox{-0.25ex}
{\scalebox{1.5}{$\cdot$}}}
\begin{document}

\title[Restricted spaces of holomorphic sections vanishing along subvarieties]
{Restricted spaces of holomorphic sections vanishing along subvarieties}


\author{Dan Coman}
\thanks{D.\ Coman is partially supported by the NSF Grant DMS-2154273 and by the Labex CEMPI (ANR-11-LABX-0007-01)}
\address{Department of Mathematics, Syracuse University, 
Syracuse, NY 13244-1150, USA}\email{dcoman@syr.edu}
\author{George Marinescu}
\address{Universit{\"a}t zu K{\"o}ln, Mathematisches Institut, Weyertal 86-90, 50931 K{\"o}ln, 
Deutschland    \newline
    \mbox{\quad}\,Institute of Mathematics `Simion Stoilow', Romanian Academy, Bucharest, Romania}
\email{gmarines@math.uni-koeln.de}
\thanks{G.\ Marinescu is partially supported 
by the DFG funded projects SFB/TRR 191 
`Symplectic Structures in Geometry, Algebra and Dynamics' 
(Project-ID 281071066-TRR 191), DFG Priority Program 2265 
`Random Geometric Systems' (Project-ID 422743078)
and the ANR-DFG project QuaSiDy (Project-ID 490843120).}
\author{Vi{\^e}t-Anh Nguy{\^e}n}
\address{Universit\'e de Lille, 
Laboratoire de math\'ematiques Paul Painlev\'e, 
CNRS U.M.R. 8524,  \newline
    \mbox{\quad}\,59655 Villeneuve d'Ascq Cedex, 
France
 \newline
    \mbox{\quad}\,Vietnam Institute for Advanced Study in Mathematics (VIASM), 157 Chua Lang Street,
Hanoi, \newline \mbox{\quad}\,Vietnam}
\email{Viet-Anh.Nguyen@univ-lille.fr}
\thanks{V.-A. Nguyen is   supported by 
 the Labex CEMPI (ANR-11-LABX-0007-01), the ANR-DFG project QuaSiDy  
 (ANR-21-CE40-0016), and partially by Vietnam Institute for Advanced Study in Mathematics (VIASM)}

\subjclass[2010]{Primary 32L10; Secondary 32A60, 32C20, 32U05, 32U40, 53C55, 81Q50}
\keywords{(Partial) Bergman kernel function, big line bundle, singular Hermitian metric, holomorphic section, big cohomology class.}
\date{October 7, 2023}

\pagestyle{myheadings}

\begin{abstract}

Let $X$ be a compact normal complex space of dimension $n$ and $L$ be a holomorphic line bundle on $X$. Suppose that $\Sigma=(\Sigma_1,\ldots,\Sigma_\ell)$ is an $\ell$-tuple of distinct irreducible proper analytic subsets of $X$, $\tau=(\tau_1,\ldots,\tau_\ell)$ is an $\ell$-tuple of positive real numbers, and let $H^0_0(X,L^p)$ be the space of holomorphic sections of $L^p:=L^{\otimes p}$ that vanish to order at least $\tau_jp$ along $\Sigma_j$, $1\leq j\leq\ell$. If $Y\subset X$ is an irreducible analytic subset of dimension $m$, we consider the space $H^0_0 (X|Y, L^p)$ of holomorphic sections of $L^p|_Y$ that extend to global holomorphic sections in $H^0_0(X,L^p)$. Assuming that the triplet $(L,\Sigma,\tau)$ is big in the sense that $\dim H^0_0(X,L^p)\sim p^n$, we give a general condition on $Y$ to ensure that $\dim H^0_0(X|Y,L^p)\sim p^m$. When $L$ is endowed with a continuous Hermitian metric, we show that the Fubini-Study currents of the spaces $H^0_0(X|Y,L^p)$ converge to a certain equilibrium current on $Y$. We apply this to the study of the equidistribution of zeros in $Y$ of random holomorphic sections in $H^0_0(X|Y,L^p)$ as $p\to\infty$.
\end{abstract}

\maketitle
\tableofcontents

\section{Introduction} \label{S:Intro}
 
Let $X$ be a compact complex manifold of dimension $n$. If $L$ is a holomorphic line bundle over $X$ we let $L^p := L^{\otimes p}$ and denote by $H^0(X, L^p)$ the space of global holomorphic sections of $L^p$. The line bundle $L$ is called \emph{big} if its Kodaira-Iitaka dimension is equal to the dimension of $X$ (see \cite[Definition 2.2.5]{MM07}). One has that $L$ is big if and only if the volume of $L$ 
\[\Vol_X(L):=\limsup_{p\to\infty} n!\,p^{-n}\dim  H^0 (X, L^p)>0\] 
(see \cite[Theorem 2.2.7]{MM07}). By the Ji-Shiffman/Bonavero/Takayama criterion \cite[Theorem 2.3.30]{MM07}, $L$ is big if and only if it admits a strictly positively curved singular Hermitian metric $h$ (see Section \ref{SS:Preliminaries} for definitions). 

Let $Y\subset X$ be a complex submanifold of dimension $m$. 
To understand ``how many" sections of $L^p|_Y$ 
are restrictions to $Y$ of global sections in $H^0(X,L^p)$, 
Hisamoto considers in \cite{H12} the space 
$H^0(X|Y,L^p):=\{S|_Y:\,S\in H^0(X,L^p)\}$ and the restricted volume 
\[\Vol_{X|Y}(L):=\limsup_{p\to\infty} 
m!\,p^{-m}\dim  H^0 (X|Y, L^p).\]
He studies the asymptotics of the restricted Bergman kernels 
of the spaces $H^0(X|Y,L^p)$ when $L$ is endowed with 
a smooth Hermitian metric $h$, and obtains formulas for 
$\Vol_{X|Y}(L)$ in terms of the Monge-Amp\`ere measure 
related to an equilibrium metric associated to $h$ along $Y$.

Let now $X$ be a compact normal complex space of 
dimension $n$ and $L$ be a holomorphic line bundle over $X$. 
Suppose $\Sigma=(\Sigma_1,\ldots,\Sigma_\ell)$ 
is an $\ell$-tuple of distinct irreducible proper analytic subsets 
of $X$ and $\tau=(\tau_1,\ldots,\tau_\ell)$ is an 
$\ell$-tuple of positive real numbers. 
In \cite{CMN19} we studied the spaces of holomorphic sections 
of $L^p$ that vanish to order at least $\tau_jp$ along
$\Sigma_j$ for all $j=1,\ldots,\ell$. Motivated by \cite{H12}, 
in this paper we consider in addition an analytic subset $Y\subset X$ 
of dimension $m$ and the spaces $H^0_0(X|Y,L^p)$ of sections 
of $L^p|_Y$ which extend to global holomorphic sections of 
$L^p$ on $X$ having the above vanishing properties. 
We study algebraic and analytic objects associated 
to $H^0_0(X|Y,L^p)$, especially the asymptotic growth
of their dimension, and the asymptotics of their Bergman kernels, 
Fubini-Study currents and potentials. We also study the equidistribution 
of zeros of random sequences of sections 
$\{s_p\in H^0_0(X|Y,L^p)\}_{p\geq1}$, as $p\to\infty$.

\par More precisely, in analogy to \cite{CMN19}, 
we consider in this paper the following setting:

\smallskip

(A) $X$ is a compact, irreducible, normal (reduced) 
complex space of dimension $n$, 
$X_\reg$ denotes the set of regular points of $X$, 
and $X_\sing$ denotes the set 
of singular points of $X$.

\smallskip

(B) $L$ is a holomorphic line bundle on $X$.

\smallskip

(C) $\Sigma=(\Sigma_1,\ldots,\Sigma_\ell)$ is an 
$\ell$-tuple of distinct irreducible proper analytic subsets 
of $X$ such that $\Sigma_j\not\subset X_\sing$, 
for every $j\in\{1,\ldots,\ell\}$. We set 
\[\Sigma^\cup=\bigcup_{j=1}^\ell\Sigma_j.\]

\smallskip

(D) $\tau=(\tau_1,\ldots,\tau_\ell)$ is an $\ell$-tuple 
of positive real numbers such that $\tau_j>\tau_k$, 
for every $j,k\in\{1,\ldots,\ell\}$ with $\Sigma_j\subset\Sigma_k$.

\smallskip

(E)  $Y$  is an irreducible proper analytic subset of $X$ 
of dimension $m$  such that 
\[Y\not\subset X_\sing\cup\Sigma^\cup\cup A,\]
where $A=A(L,\Sigma,\tau)$ is the analytic subset 
of $X$ defined in \eqref{e:A}.
\smallskip

For $p\geq1$, let $H^0_0 (X|Y, L^p)$ be the space of sections 
$S\in H^0(Y,L^p|_Y)$ 
which extend to a holomorphic section of $L^p$ on $X$ that vanishes to order at 
least $\tau_{j}p$ along $\Sigma_{j}$, $1\leq j\leq\ell$. Set 
\begin{equation}\label{e:tjp}
t_{j,p}=\begin{cases}\tau_jp  &\text{if $\tau_jp\in\N$} \\
\lfloor\tau_jp\rfloor+1 &\text{if $\tau_jp\not\in\N$}
\end{cases}\;,\;\;1\leq j\leq\ell\,,\,\;p\geq1,
\end{equation}
where $\lfloor r\rfloor$ denotes the greatest integer $\leq r\in\R$. Then 
\begin{equation}\label{e:H00}
H^0_0(X|Y, L^p)=H^0_0(X|Y, L^p,\Sigma,\tau):=\big\{S|_Y:\,S\in H^0_0(X, L^p)\big\}\subset H^0(Y,L^p|_Y),
\end{equation}
where
\begin{equation}\label{e:H00-bis}
H^0_0(X, L^p)=H^0_0(X,L^p,\Sigma,\tau):=\{S\in H^0(X, L^p):\,\ord(S,\Sigma_j)\geq t_{j,p},\;1\leq j\leq\ell\}\,,
\end{equation}
and  $\ord(S,Z)$ denotes the vanishing order of $S$ along 
an irreducible analytic subset $Z$ of $X$, $Z\not\subset X_\sing$. 

To measure the asymptotic growth of the dimension of these spaces, we define the restricted volume of $L$ relative to $Y$ with vanishing along $(\Sigma,\tau)$ by
 \begin{equation}
  \label{e:restricted-vol}
  \Vol_{Y,\Sigma,\tau}(L):=\limsup_{p\to\infty}\frac{\dim H^0_0(X|Y, L^p)}{p^m/m!}\,.
 \end{equation}

Note that, when $Y=X$, $H^0_0(X|X, L^p)=H^0_0(X,L^p)$ are the spaces defined in \eqref{e:H00-bis}, which were introduced and studied in \cite{CMN19}. We recall from \cite{CMN19} the following:
 
\begin{Definition}\label{D:big} 
We say that  the triplet $(L,\Sigma,\tau)$ is {\it big} if 
\[\Vol_{X,\Sigma,\tau}(L)=\limsup\limits_{p\to\infty}\frac{\dim H^0_0 (X, L^p)}{p^n/n!}>0.\]
\end{Definition} 
  
In \cite[Theorem 1.6]{CMN19} we gave a complete characterization of big triplets $(L,\Sigma,\tau)$, in analogy to the Ji-Shiffman/Bonavero/Takayama criterion for big line bundles. We recall this characterization in Section \ref{SS:Div}. Our first main result here is the following: 

\begin{Theorem}\label{T:big}
Let $X,L,\Sigma,\tau$ verify assumptions (A)-(D), and assume that 
$(L,\Sigma,\tau)$ is big and $X$ is a K\"ahler space. 
Then $\Vol_{Y,\Sigma,\tau}(L)>0$ for any analytic subset $Y\subset X$ that verifies (E). 
More precisely, if $Y$ verifies (E) then there exist constants $C>0,p_0\in\N$ such that 
\[\dim H^0_0(X|Y,L^p)\geq Cp^m\,,\,\;\forall\,p>p_0.\]
\end{Theorem}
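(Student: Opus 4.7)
My plan is to combine the analytic characterization of big triplets from \cite[Theorem 1.6]{CMN19} (recalled in Section \ref{SS:Div}) with an Ohsawa-Takegoshi-Manivel $L^2$-extension argument. Concretely, I would exhibit, near a generic smooth point $y_0\in Y$, an explicit family of monomial sections on $Y$ of cardinality $\sim p^m$, and extend each of them to a global section in $H^0_0(X,L^p)$ using the singular Hermitian metric furnished by bigness.

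By \cite[Theorem 1.6]{CMN19}, the bigness of $(L,\Sigma,\tau)$ is equivalent to the existence of a singular Hermitian metric $h$ on $L$ whose curvature current $c_1(L,h)$ dominates $\sum_{j=1}^\ell \tau_j[\Sigma_j]+\varepsilon\omega$ for some K\"ahler form $\omega$ on $X$ and some $\varepsilon>0$, with local potentials locally bounded outside the bad locus $X_\sing\cup\Sigma^\cup\cup A$ from \eqref{e:A}. Hypothesis (E) then furnishes a point $y_0\in Y_\reg\setminus(X_\sing\cup\Sigma^\cup\cup A)$; in a small polydisc neighborhood $V\subset X_\reg$ of $y_0$ and a local trivialization of $L$, write $h=e^{-2\phi}$ with $\phi$ strictly plurisubharmonic on $V$, and choose coordinates $z=(z',z'')$ on $V$ with $Y\cap V=\{z''=0\}$. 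To make $L^2$-methods applicable, pass to a log resolution $\pi\colon\widetilde X\to X$, a smooth K\"ahler manifold on which the strict transforms $\widetilde\Sigma_j$ become smooth divisors and the strict transform $\widetilde Y$ of $Y$ is a smooth K\"ahler submanifold, with $\pi$ a biholomorphism over $V$.

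For each $\alpha\in\N^m$ with $|\alpha|\leq \lfloor\varepsilon p\rfloor$ (after shrinking $\varepsilon>0$ if necessary), the monomial $(z')^\alpha$ defines a local holomorphic section of $L^p|_{Y\cap V}$ with uniformly bounded $L^2$-norm against the weight $e^{-2p\phi|_Y}$. An Ohsawa-Takegoshi-Manivel extension performed on $\widetilde X$ with weight $p\phi$ (plus the singular contribution from $h$ along the $\widetilde\Sigma_j$) extends $(z')^\alpha$ (multiplied by a cutoff) to a global holomorphic section $\widetilde s_\alpha\in H^0(X,L^p)$ satisfying $\int_X|\widetilde s_\alpha|^2_{h^p}<+\infty$. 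Finite integrability against $h^p$, combined with the fact that the Lelong number of $h$ along $\Sigma_j$ is at least $\tau_j$, forces $\ord(\widetilde s_\alpha,\Sigma_j)\geq t_{j,p}$, so $\widetilde s_\alpha\in H^0_0(X,L^p)$. Since $\widetilde s_\alpha|_{Y\cap V}$ agrees with $(z')^\alpha$ up to a lower-order cutoff error, the restrictions $\widetilde s_\alpha|_Y$ are linearly independent in $H^0(Y,L^p|_Y)$, and their total number $\binom{m+\lfloor\varepsilon p\rfloor}{m}\sim Cp^m$ yields the claimed lower bound.

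The main technical obstacle is the rigorous application of $L^2$-extension on the normal K\"ahler space $X$: this is handled by working on the smooth K\"ahler resolution $\widetilde X$ and pushing forward, with the K\"ahler assumption on $X$ used precisely to produce the K\"ahler form (or at least a K\"ahler current) on $\widetilde X$ required by the curvature hypothesis in Ohsawa-Takegoshi. A secondary check is that the exceptional divisor contributes no extra vanishing of $\widetilde s_\alpha$ at $y_0$ that would spoil linear independence, which is automatic since $\pi$ is a biholomorphism over $V$.
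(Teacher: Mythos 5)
Your overall architecture — produce $\sim p^m$ holomorphic sections of $L^p|_{\widetilde Y}$ on a suitable resolution, then extend them to global sections vanishing along $\Sigma$ using Hisamoto's Ohsawa--Takegoshi--Manivel theorem, then descend via the isomorphisms between the restricted spaces on $X$, $\widetilde X$, $\widehat X$ — is precisely the strategy the paper uses. The genuine difference is in how the sections on $\widetilde Y$ are produced: the paper applies Bonavero's singular holomorphic Morse inequalities (Proposition \ref{P:Bonavero}) to a Kähler current with almost algebraic singularities representing the perturbed class $\widehat\Theta_r$, whereas you propose an explicit peak-section construction (monomials $(z')^\alpha$, $|\alpha|\lesssim\varepsilon p$, near a generic point $y_0\in Y_\reg\setminus(X_\sing\cup\Sigma^\cup\cup A)$). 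Both mechanisms are standard and either can yield the $p^m$ count, but Bonavero's result also supplies the sharper integral lower bound $\tfrac{p^m}{m!}\int_{\widehat Y\setminus A}c_1(\widehat L,\widehat h)^m$, which is more in keeping with the restricted-volume point of view.

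There is, however, a genuine gap in the step where you conclude that the extended sections lie in $H^0_0(X,L^p)$. You assert that finite $L^2$-integrability against $h^p$ together with Lelong number $\geq\tau_j$ along $\Sigma_j$ forces $\ord(\widetilde s_\alpha,\Sigma_j)\geq t_{j,p}$. In local coordinates where $\Sigma_j=\{z_1=0\}$ and the weight has a pole of order $a=p\tau_j$ along $\Sigma_j$, writing $\widetilde s_\alpha=z_1^k u$ with $u$ generically nonvanishing, integrability of $|z_1|^{2(k-a)}$ near $z_1=0$ gives $k>a-1$, hence $k\geq\lfloor p\tau_j\rfloor$ when $p\tau_j\notin\N$. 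But the required threshold from \eqref{e:tjp} is $t_{j,p}=\lfloor p\tau_j\rfloor+1$ in that case, so the bound is off by one. The Lelong number being $\geq\tau_j$ on the nose does not suffice. The paper's proof handles this by using the Kähler-current slack to perturb $\tau_j$ to a slightly larger rational $r_j>\tau_j$ (see \eqref{e:divcoh3} and Lemma \ref{L:nKpert}); then $L^2$-integrability gives $\ord\geq\lfloor r_jp\rfloor>\tau_jp$ for all $p$ large, which does meet $t_{j,p}$. Your argument needs the same perturbation; without it the conclusion fails for $p$ with $p\tau_j\notin\N$.

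A second, more technical issue: the $L^2$-extension theorem quoted (Theorem \ref{T:Hisamoto-l2-ext}) takes as input a \emph{global} holomorphic section on the submanifold, whereas $(z')^\alpha$ is only defined locally near $y_0$. Multiplying by a cutoff destroys holomorphicity, so you must first globalize $(z')^\alpha$ on $\widetilde Y$ by solving a $\overline\partial$-equation with an $L^2$-estimate whose weight is singular enough at $y_0$ to preserve the prescribed jet — and you must check the resulting weight still has strictly positive curvature, which consumes part of the $\varepsilon$ from the Kähler current. This is exactly the content that Bonavero's Morse inequalities package for you. It is fixable but not a one-line matter, and deserves to be made explicit. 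Finally, a small correction to the setup: Theorem \ref{T:CMN3} produces the metric on $\pi^\star L$ over a divisorization $\widetilde X$, not on $L$ over $X$; since $\pi$ is a biholomorphism near $y_0$ this is cosmetic for the local construction, but the global $\overline\partial$-estimate and the OTM extension must be carried out on the resolution, not on $X$.
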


Theorem \ref{T:big} shows that if $Y$ verifies (E) the dimension of 
the restricted spaces $H^0_0(X|Y, L^p)$ of sections of $L^p$ vanishing along 
$(\Sigma,\tau)$ has the largest possible asymptotic growth $p^{\dim Y}$, 
as soon as the dimension of the ``global" spaces $H^0_0(X,L^p)$ 
grows like $p^{\dim X}$. The proof of Theorem \ref{T:big} is given in Section \ref{S:Dim}.

\medskip
 
Our next result deals with the asymptotics of the Bergman kernel functions and 
Fubini-Study currents associated to the spaces $H^0_0(X|Y,L^p)$. 
Let $X,Y,L,\Sigma,\tau$ verify assumptions (A)-(E), 
and assume in addition that there exists a K\"ahler form $\omega$ 
on $X$ and that $h$ is a singular Hermitian metric on $L$. 
We fix a smooth Hermitian metric $h_0$ on $L$ and write 
\begin{equation}\label{e:varphi}
\alpha:=c_1(L,h_0)\,,\,\;h=h_0e^{-2\varphi}\,,
\end{equation}
 where $\varphi\in L^1(X,\omega^n)$ is called the 
 {\it (global) weight of  $h$  relative to $h_0$}. The metric $h$ is called bounded, 
 continuous, resp.\ H\"older continuous, if $\varphi$ is a bounded, 
 continuous, resp.\ H\"older continuous, function on $X$. 

Let $H^0_{(2)}(X,L^p)=H^0_{(2)}(X,L^p,h^p,\omega^n)$ 
be the Bergman space of $L^2$-holomorphic sections 
of $L^p$ relative to the metric $h^p:=h^{\otimes p}$ and the volume form 
$\omega^n$ on $X$, endowed with the inner product
\[(S,S')_p:=\int_{X}\langle S,S'\rangle_{h^p}\,\frac{\omega^n}{n!}\,,\]
and set $\|S\|_{p}^2:=(S,S)_{p}$. 


We assume in the sequel that the metric $h$ is {\em bounded}. 
Then $h|_Y=h_0|_Ye^{-2\varphi|_Y}$ is a well defined singular metric on $L|_Y$ and we have 
\[H^0_0 (X|Y, L^p)\subset H^0_{(2)}(Y,L^p|_Y,h^p|_Y,\omega^m|_Y).\] 
We use the notation
\[H^0_{0,(2)}(X|Y, L^p)=H^0_{0,(2)}(X|Y, L^p,\Sigma,\tau,h^p,\omega^m):=H^0_0 (X|Y, L^p)\]
when we consider the space $H^0_0 (X|Y, L^p )$ with the inner product induced by $h^p|_Y$ and $\omega^m|_Y$.
 
Let $P^Y_p,\gamma^Y_p$ be the Bergman kernel function and Fubini-Study current of the Bergman space 
$H^0_{0,(2)}(X|Y,L^p)$ defined in \eqref{e:Bkf} and \eqref{e:FS}-\eqref{e:FS_local}. Then  
\begin{equation}\label{e:FSpot}
\frac{1}{p}\,\gamma^Y_p=c_1(L,h)|_Y+\frac{1}{2p}\,dd^c\log P^Y_p=
\alpha|_Y+dd^c\varphi^Y_p\,,\,\text{ where }\,\varphi^Y_p=
\varphi|_Y+\frac{1}{2p}\,\log P^Y_p\,.
\end{equation}
Here $d^c:= \frac{1}{2\pi i}\,(\partial -\overline\partial)$, so $dd^c=\frac{i}{\pi}\,\partial\overline\partial$. We call the function $\varphi^Y_p$ the {\em global Fubini-Study potential} 
of $\gamma^Y_p$. We obtain here the following result on the convergence of the Fubini-Study currents.

\begin{Theorem}\label{T:FSpot}
Let $X,Y,L,\Sigma,\tau$ verify assumptions (A)-(E), and assume that 
$(L,\Sigma,\tau)$ is big and there exists a K\"ahler form $\omega$ on $X$. 
Let $h$ be a continuous Hermitian metric on $L$ and $\alpha,\varphi^Y_p$ be 
defined in \eqref{e:varphi},\ke\eqref{e:FSpot}. Then there exists a weakly  
$\alpha|_Y$-plurisubharmonic function $\varphi^Y_\eq$ on $Y$ such that 
\begin{equation}\label{e:cFSpot}
\varphi^Y_p\to\varphi^Y_\eq\,,\,\;
\frac{1}{p}\,\gamma^Y_p=\alpha|_Y+dd^c\varphi^Y_p\to T^Y_\eq:=
\alpha|_Y+dd^c\varphi^Y_\eq\,, \text{ as } p\to\infty,
\end{equation}
in $L^1(Y,\omega^m|_Y)$, respectively weakly on $Y$.
Moreover, if $h$ is H\"older continuous then there exist a constant 
$C>0$ and $p_0\in\N$ such that 
\begin{equation}\label{e:eFSpot}
\int_Y|\varphi^Y_p-\varphi^Y_\eq|\,\omega^m\leq C\,\frac{\log p}{p}\,,
\,\;\text{for all $p\geq p_0$}\,.
\end{equation}
\end{Theorem}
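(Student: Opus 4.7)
The strategy is to define the limit $\varphi^Y_\eq$ as the upper semicontinuous regularization of an appropriate extremal envelope on $Y$ and then establish the convergence $\varphi^Y_p\to\varphi^Y_\eq$ by a sandwich argument; the current convergence $\tfrac{1}{p}\gamma^Y_p\to T^Y_\eq$ then follows by applying $dd^c$. Concretely, I would let $\varphi^Y_\eq$ be the upper semicontinuous regularization of the supremum of all weakly $\alpha|_Y$-plurisubharmonic functions $\psi$ on $Y$ satisfying $\psi\leq\varphi|_Y$ and having generic Lelong number $\geq\tau_j$ along each irreducible component of $\Sigma_j\cap Y$, in direct analogy with the global equilibrium potential constructed in \cite{CMN19}. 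Assumption (E) ensures that each $\Sigma_j\cap Y$ is a proper analytic subset of $Y$, so the Lelong-number conditions are meaningful.

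The upper bound exploits the extremal description
\[
\varphi^Y_p(y) \;=\; \sup\Bigl\{\tfrac{1}{p}\log|S(y)|_{h_0^p} : S \in H^0_{0,(2)}(X|Y,L^p),\ \|S\|_p \leq 1\Bigr\}.
\]
Each $\frac{1}{p}\log|S|_{h_0^p|_Y}$ is $\alpha|_Y$-psh with generic Lelong number $\geq t_{j,p}/p\geq\tau_j$ along $\Sigma_j\cap Y$. A standard submean inequality for $|S|^2$ in local trivializations of $L|_Y$, combined with the continuity of $\varphi$, yields $P^Y_p(y)\leq C_K p^{2m}e^{\varepsilon p}$ for every $\varepsilon>0$ on any compact $K\subset Y_\reg\setminus\Sigma^\cup$, hence $\limsup_p \varphi^Y_p\leq\varphi|_Y$ on such $K$. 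The family $\{\varphi^Y_p\}$ is therefore locally bounded above and equi-$\alpha|_Y$-psh, so it is relatively compact in $L^1_\loc(Y_\reg)$, and any cluster value $(\limsup_k\varphi^Y_{p_k})^*$ satisfies the three constraints defining $\varphi^Y_\eq$, hence is $\leq\varphi^Y_\eq$.

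The lower bound $\liminf_p\varphi^Y_p\geq\varphi^Y_\eq$ is the heart of the argument. Given any admissible $\psi$ in the envelope and a basepoint $y_0\in Y_\reg\setminus\Sigma^\cup$, I would construct global sections $S_p\in H^0_0(X,L^p)$ via Ohsawa--Takegoshi-type $L^2$-extension using the twisted weight $p(\varphi+\psi)$, ensuring the vanishing orders $\geq t_{j,p}$ along each $\Sigma_j$ and the lower estimate $\tfrac{1}{p}\log|S_p(y_0)|_{h_0^p}\geq\psi(y_0)-C(\log p)/p$. Theorem \ref{T:big} supplies the abundance of restricted sections needed to seed this construction, and the bigness of $(L,\Sigma,\tau)$ furnishes the global $L^2$-estimates with weights tailored to $(\Sigma,\tau)$. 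Combined with the upper bound and $L^1_\loc$ compactness, one obtains $\varphi^Y_p\to\varphi^Y_\eq$ in $L^1_\loc(Y_\reg)$; uniform integrability of the negative parts (standard for quasi-psh families) upgrades this to convergence in $L^1(Y,\omega^m|_Y)$, and applying $dd^c$ gives the weak current convergence.

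The principal obstacle will be the quantitative lower bound: producing sections with prescribed vanishing along $\Sigma$ whose restrictions to $Y$ are controlled from below by $\psi$, with the explicit rate $O(\log p/p)$ in the H\"older case. For this I would combine the restriction-extension framework of \cite{CMN19} with the optimal choice $r\sim p^{-1/\beta}$ in the submean inequality, which yields the sharp upper estimate $\varphi^Y_p\leq\varphi^Y_\eq+C(\log p)/p$, and Dinh--Ma--Marinescu-type equidistribution estimates adapted to the restricted setting, yielding the matching integral bound $\int_Y|\varphi^Y_p-\varphi^Y_\eq|\,\omega^m\leq C(\log p)/p$.
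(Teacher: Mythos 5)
Your proposal identifies the right sandwich architecture, but the definition of the envelope $\varphi^Y_\eq$ is not the correct one, and this is not a cosmetic point: it is where the proof actually lives.

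You define $\varphi^Y_\eq$ intrinsically on $Y$, as the regularized supremum of $\alpha|_Y$-psh functions $\psi$ on $Y$ with $\psi\le\varphi|_Y$ and Lelong number $\ge\tau_j$ along $\Sigma_j\cap Y$. The paper's envelope (see \eqref{e:calA}, \eqref{e:enveq}) is instead a \emph{restriction envelope}: the supremum at $x\in Y$ of $\psi(x)$ over $\alpha$-psh functions $\psi$ defined on all of $X$ with Lelong number $\ge\tau_j$ along $\Sigma_j$ in $X$, subject to $\psi\le\varphi$ on $Y$. Every such restriction is a competitor in your envelope, so your envelope dominates the paper's, and the inequality is in general strict. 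A concrete way to see that yours cannot be the limit: take $Y\cap\Sigma^\cup=\emptyset$. Then your pole conditions are vacuous and your $\varphi^Y_\eq$ is the ordinary equilibrium potential for $(L|_Y,h|_Y)$ on $Y$, whereas $H^0_0(X|Y,L^p)$ is a \emph{proper} subspace of $H^0(Y,L^p|_Y)$ cut out by the global vanishing along $\Sigma$ (e.g.\ $X=\P^n$, $\Sigma$ a hyperplane, $\tau_1$ close to $d$: the restricted space on a line $Y$ disjoint from $\Sigma$ has much smaller dimension). So $\varphi^Y_p$ cannot converge to an object that has forgotten $\Sigma$ entirely; the envelope must carry the global pole data even when $Y$ misses $\Sigma$.

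This infects both halves of your sandwich. For the upper bound, the paper's key observation is that $\varphi^Y_p$ is the \emph{restriction} of a global $\alpha$-psh function on $X$ with Lelong number $\ge\tau_j$ along $\Sigma_j$ (because each section in $H^0_0(X|Y,L^p)$ is by definition a restriction of a section in $H^0_0(X,L^p)$), so $\varphi^Y_p$ is a competitor in the paper's (smaller) envelope, up to a controllable error; your weaker observation that $\varphi^Y_p$ is merely $\alpha|_Y$-psh with the right Lelong numbers only pins it below your (larger) envelope. For the lower bound, the Ohsawa--Takegoshi/Hisamoto step produces a section on $X$ whose restriction to $Y$ is again bounded above by a global admissible weight, so it can only reach the paper's envelope, not yours. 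Moreover your description of the extension ``using the twisted weight $p(\varphi+\psi)$'' does not parse as written: $\psi$ lives only on $Y$, whereas the $L^2$ extension theorem requires a metric on $L^p$ over the ambient manifold. In the paper the competitor $\psi$ is already a global function on $X$, which is precisely what makes the weight well-defined; there is no extension problem for $\psi$ because it never needed extending.

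Finally, a structural gap you do not address: $X$, $Y$, and the $\Sigma_j$ are allowed to be singular, and the local submean estimates, the Demailly $\bar\partial$-estimates, and Hisamoto's $L^2$-extension theorem all require a smooth model with normal crossings. The paper passes to a divisorization $(\wi X,\pi,\wi\Sigma)$ and then an embedded resolution $\wi\pi:\wih X\to\wi X$ so that $\wih Y$ is smooth and meets the exceptional divisors in normal crossings, proves the convergence for $\wih\varphi^{\wih Y}_p$ on $\wih Y$, and pushes forward. Without this reduction your local analysis on $Y$ is not justified.
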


\begin{Definition}\label{D:TYeq}
The current $T^Y_\eq$ from Theorem \ref{T:FSpot} is called the {\em equilibrium current associated to $(Y,L,h,\Sigma,\tau)$}.
\end{Definition}

Theorem \ref{T:FSpot} is proved in Section \ref{S:FSpot}. The study of the Fubini-Study 
currents associated to various Bergman spaces of holomorphic sections is motivated 
by a foundational result of Tian \cite{Ti90} (see also \cite[Theorem 5.1.4]{MM07}), 
who showed that in the case of a positive line bundle $(L,h)$ on a projective 
manifold $X$ the corresponding Fubini-Study forms $\gamma_p/p\to c_1(L,h)$ as 
$p\to\infty$ in the $\cC^\infty$-topology. This result was generalized in 
\cite[Theorem 5.1]{CM15} to the case of a singular metric $h$ whose curvature 
is a K\"ahler current, by showing that in this case $\gamma_p/p\to c_1(L,h)$ in the 
weak sense of currents. It was further generalized to the case of arbitrary sequences 
of line bundles on compact normal K\"ahler spaces in \cite{CMM17}. The case of 
non-positively curved Hermitian metrics $h$ on big line bundles over projective 
manifolds was treated in \cite{Ber07,Ber09} by considering the equilibrium metric 
associated to $h$, constructed by analogy to extremal plurisubharmonic functions. 
Previously, Bloom \cite{Bl05,Bl09} (cf.\ also Bloom-Levenberg \cite{BL15}) pointed 
out the role of the extremal plurisubharmonic functions in the equidistribution 
theory for random polynomials. More generally, equilibrium metrics with prescribed 
singularities on a line bundle are introduced and studied in \cite{RWN17} 
(see also \cite[Theorem 3]{Dar17}).

We conclude this paper with an application of Theorem \ref{T:FSpot} to the study of the distribution of zeros in $Y$ of random sections in the spaces $H^0_{0,(2)}(X|Y,L^p)$ as $p\to\infty$. To this end, we consider the projective space
\[ \X^Y_p:=\P H^0_{0,(2)}(X|Y,L^p),\; d_p:=\dim H^0_{0,(2)}(X|Y,L^p)-1.\]
We identity $H^0_{0,(2)}(X|Y,L^p)\equiv\C^{d_p+1}$ by using an orthonormal basis, 
and we let $\sigma_p=\omega_\FS^{d_p}$ be the Fubini-Study volume on $\X^Y_p$ induced 
by this identification. Here and in the sequel $\omega_\FS$ denotes the Fubini-Study form 
on a projective space $\P^N$. We also consider the product probability space
\[(\X^Y_\infty,\sigma_\infty):= \prod_{p=1}^\infty (\X^Y_p,\sigma_p)\,.\] 

\begin{Theorem}\label{T:zrhs}
Let $X,Y,L,\Sigma,\tau$ verify assumptions (A)-(E), let $h$ be a singular Hermitian metric on $L$, 
and assume that $(L,\Sigma,\tau)$ is big and there exists a K\"ahler form $\omega$ on $X$.
 
(i) If $h$ is continuous then $\displaystyle\frac{1}{p}\,[s_p=0]\to T^Y_\eq$ 
as $p\to\infty$, in the weak sense of currents on $Y$, 
for $\sigma_\infty$-{\ke}a.{\ke}e.\ $\{s_p\}_{p\geq1}\in\X^Y_\infty$\,.
 
(ii) If $h$ is H\"older continuous then there exists a constant $c>0$ 
with the following property: For any sequence of positive numbers 
$\{\lambda_p\}_{p\geq1}$ such that 
\[\liminf_{p\to\infty} \frac{\lambda_p}{\log p}>(1+m)c\,,\]
there exist subsets $E_p\subset\X^Y_p$ such that, for all $p$ sufficiently large, 

(a) $\sigma_p(E_{p})\leq cp^m\exp(-\lambda_p/c)$\,,

(b) if $s_p\in\X^Y_p\setminus E_p$ we have 
\[\Big|\Big \langle\frac{1}{p}\,[s_p=0]-T^Y_\eq,\phi\Big\rangle\Big|
\leq\frac{c\lambda_p}{p}\,\| \phi\|_{\cC^2}\,,\] 
for any $(m-1,m-1)$-form $\phi$ of class $\cC^2$  on $Y$. 

In particular, the last estimate holds for $\sigma_\infty$-{\ke}a.{\ke}e.\ 
$\{s_p\}_{p\geq1}\in\X^Y_\infty$ provided that $p$ is large enough.
\end{Theorem}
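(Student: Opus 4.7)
My plan is to follow the standard Dinh-Sibony-Coman-Marinescu framework for equidistribution of zeros of $L^2$-random sections (as in the references discussed in the introduction), adapted to the restricted spaces $H^0_{0,(2)}(X|Y,L^p)$. The idea is to decompose the normalized zero current into a ``deterministic'' piece governed by Theorem \ref{T:FSpot} and a ``random'' piece whose fluctuations are bounded by a Beta-tail concentration estimate on $\X^Y_p$.

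For $s_p\in\X^Y_p$ identified with a unit $L^2$-representative such that $s_p\not\equiv 0$ on $Y$ (the excluded set has $\sigma_p$-measure zero), I would apply the Lelong-Poincar\'e formula on $Y$:
\[
  [s_p=0]\;=\;p\,c_1(L,h)|_Y + dd^c\log|s_p|_{h^p}\,.
\]
Using $c_1(L,h)|_Y=\alpha|_Y+dd^c\varphi|_Y$ together with $\tfrac{1}{2p}\log P^Y_p=\varphi^Y_p-\varphi|_Y$ from \eqref{e:varphi}-\eqref{e:FSpot}, and setting $\psi_p:=\tfrac{1}{2p}\log(|s_p|^2_{h^p}/P^Y_p)$, one obtains the decomposition
\[
  \tfrac{1}{p}[s_p=0]-T^Y_\eq\;=\;dd^c\bigl(\varphi^Y_p-\varphi^Y_\eq\bigr)\;+\;dd^c\psi_p\,.
\]
Expanding $s_p$ in an orthonormal basis of $H^0_{0,(2)}(X|Y,L^p)$ and applying Cauchy-Schwarz shows $\psi_p\leq 0$ on $Y$. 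Integration by parts against a test form $\phi\in\cC^2$ then reduces the theorem to controlling
\[
  \bigl\|\varphi^Y_p-\varphi^Y_\eq\bigr\|_{L^1(Y,\omega^m)}\quad\text{and}\quad\|\psi_p\|_{L^1(Y,\omega^m)}\,.
\]
The first term is $o(1)$ in general, and $O(\log p/p)$ for H\"older $h$, by Theorem \ref{T:FSpot}.

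For the random term, the key probabilistic input is that at each fixed $x\in Y$ with $P^Y_p(x)>0$, the ratio $|s_p(x)|^2_{h^p}/P^Y_p(x)$ equals $|\langle a,u\rangle|^2$ for $a$ uniform on the unit sphere of $\C^{d_p+1}$ and a fixed unit vector $u=u(x)$, and hence follows the $\mathrm{Beta}(1,d_p)$ law. Consequently
\[
  \sigma_p\bigl\{\,|\psi_p(x)|>\delta\,\bigr\}\;=\;1-(1-e^{-2p\delta})^{d_p}\;\leq\;d_p\,e^{-2p\delta}\,.
\]
To upgrade this pointwise tail estimate to an $L^1$-bound over $Y$, I would combine the uniform dimension bound $d_p+1=O(p^m)$ (automatic from Theorem \ref{T:big} and the upper estimate $\dim H^0(Y,L^p|_Y)=O(p^m)$) with a Chebyshev-Fubini argument, and use the $L^1$-convergence from Theorem \ref{T:FSpot} to estimate the contribution of the region where $P^Y_p$ is exponentially small, via a Skoda-type exponential integrability estimate, exactly as in the proof of the analogous statement in \cite{CMN19}. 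Setting $E_p:=\{s_p\in\X^Y_p:\|\psi_p\|_{L^1(Y)}>c\lambda_p/p\}$ for $c$ large enough gives (a); combining (a) with the $L^1$-rate of Theorem \ref{T:FSpot} and the integration-by-parts bound above yields (b). For part (i), apply (ii) with $\lambda_p=K\log p$, $K>(1+m)c$: then $\sum_p\sigma_p(E_p)<\infty$, and the Borel-Cantelli lemma yields the claimed $\sigma_\infty$-almost sure weak convergence $\tfrac{1}{p}[s_p=0]\to T^Y_\eq$.

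The main technical obstacle I anticipate is the uniform Skoda-type exponential integrability step for the family $\{\varphi^Y_p\}$ on the possibly singular analytic set $Y$, whose intersection with $X_\sing\cup\Sigma^\cup\cup A$ may be a nontrivial proper analytic subset. The standard remedy is to pass to a desingularization $\pi:\wi Y\to Y$ and work with the pulled-back weights $\pi^*\varphi^Y_p$, which by the proof of Theorem \ref{T:FSpot} form a compact family of quasi-plurisubharmonic functions on $\wi Y$; Zeriahi's uniform version of Skoda's integrability theorem then applies on $\wi Y$ and descends to $Y$ after pushforward, producing the required exponential tail control uniformly in $p$.
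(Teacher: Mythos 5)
The overall architecture of your proposal is sound and mirrors the paper's logic at the level of the decomposition: writing
\[
\frac{1}{p}[s_p=0]-T^Y_\eq
= dd^c\bigl(\varphi^Y_p-\varphi^Y_\eq\bigr) + dd^c\psi_p,
\qquad
\psi_p=\frac{1}{2p}\log\frac{|s_p|^2_{h^p}}{P^Y_p}\le 0,
\]
and controlling the deterministic term by Theorem~\ref{T:FSpot} is exactly the spirit of the paper's two-step reduction (the intermediate Theorem~\ref{T:speed}, which compares $\tfrac1p[s_p=0]$ with $\tfrac1p\gamma^Y_p$ and needs only bounded~$h$, followed by Theorem~\ref{T:FSpot}). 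The pointwise Beta tail you derive is also correct. However, the step from that pointwise tail to the exponential concentration $\sigma_p(E_p)\le cp^m\exp(-\lambda_p/c)$ for $E_p=\{\|\psi_p\|_{L^1(Y)}>c\lambda_p/p\}$ is where your sketch has a genuine gap, and the tools you name do not fill it.

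Concretely: a Chebyshev--Fubini argument starting from $E_{\sigma_p}[|\psi_p(x,\cdot)|]\sim\frac{\log d_p}{2p}\sim\frac{m\log p}{2p}$ gives only $\sigma_p\{\|\psi_p\|_{L^1}>c\lambda_p/p\}\lesssim\frac{\log p}{\lambda_p}$, which with $\lambda_p=K\log p$ is $O(1)$ and not summable, so Borel--Cantelli fails. On the other hand, the Skoda--Zeriahi integrability you invoke on the desingularization $\widehat Y$ for the family $\{\widehat\pi^\ast\varphi^Y_p\}$ controls the sublevel sets of quasi-psh functions of the space variable $x$; applied to $\psi_p(\cdot,s)$ for fixed $s$ it yields at best a $p$-independent deterministic bound on $\|\psi_p\|_{L^1}$ (since $\psi_p$ is $\tfrac1p\gamma^Y_p$-psh with $\max=0$), with no decay in $p$ and no information about the $\sigma_p$-measure of bad sections. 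What is actually needed is exponential integrability with respect to the \emph{parameter} variable $a\in\P^{d_p}$: the function $a\mapsto\langle\psi_p(\cdot,a),\,dd^c\phi\rangle$ is (after pairing with a fixed form) quasi-psh on $\P^{d_p}$ with a uniformly controlled $dd^c$-lower bound, and Dinh--Sibony's concentration estimate for the Lelong class on projective space then produces the claimed exponential tail. This is precisely what the paper harnesses by casting the Kodaira maps $\Phi_p:\widehat Y\dashrightarrow\P H^0_{0,(2)}(\widehat X|\widehat Y,\widehat L^p)$ as meromorphic transforms of codimension $m-1$ and applying \cite[Theorem~4.1]{DS06} (via Theorem~\ref{T:speed} and the corresponding step in \cite[Theorem~6.1]{CMN19}), together with Siegel's lemma for the uniform bound $d_p=O(p^m)$.

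A secondary but real issue: your deduction of part (i) by ``apply (ii) with $\lambda_p=K\log p$'' is circular in the hypotheses, since (ii) requires H\"older continuity of $h$ whereas (i) assumes only continuity. The correct route is to apply the concentration estimate for $\psi_p$ directly — it needs only boundedness of $h$, as in Theorem~\ref{T:speed} — to obtain $\|\psi_p\|_{L^1}\to 0$ $\sigma_\infty$-a.e.\ via Borel--Cantelli, and then combine with the $L^1$-convergence $\varphi^Y_p\to\varphi^Y_\eq$ from Theorem~\ref{T:FSpot}, which for continuous $h$ holds without a rate.
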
 

Theorem \ref{T:zrhs} shows that, as soon as the triplet $(L,\Sigma,\tau)$ is big, the normalized zeros of random holomorphic sections in $H^0_0(X|Y,L^p)$ restricted to suitable analytic subsets $Y\subset X$, distribute as $p\to\infty$ to the equilibrium current $T^Y_\eq$ constructed in Theorem \ref{T:FSpot}. The proof of Theorem \ref{T:zrhs} is given in Section \ref{S:zrhs}. 
 
If $(X,L)$ is a polarized projective manifold Shiffman-Zelditch \cite{SZ99} showed how Tian's theorem
can be applied to obtain the distribution of the zeros of random holomorphic sections of $H^0(X,L^p)$.
Dinh-Sibony \cite{DS06} used meromorphic transforms 
to obtain an estimate on the speed of convergence of the zeros
(see also \cite{DMS} for the non-compact setting).
The result of \cite{SZ99} was generalized to the case of singular metrics
 in \cite{CM15} and further to the case of sequences 
of line bundles over normal complex spaces in \cite{CMM17} (see
also \cite{CM13, DMM16}). In the latter situation, the equidistribution of zeros is considered for general classes of probability measures on the spaces of sections in \cite{BCM,BCHM}. The case of common zeros of random $k$-tuples of sections was considered in \cite{CMN16,CMN18}


 \section{Preliminaries}\label{S:Preliminaries}

We introduce here some notation and we recall a few notions and results that will be used throughout the paper.
 
\subsection{Compact complex manifolds and analytic spaces}\label{SS:Preliminaries}
 
 Let $X$ be a compact complex manifold and $\omega$ 
 be a Hermitian form on $X$. If $T$ is a positive closed current on $X$ 
 we denote by $\nu(T,x)$ the Lelong number of $T$ at $x\in X$ (see e.{\ke}g.\ \cite{D93}). 
 A function $\varphi:\ X\to \R\cup\{-\infty\}$ is called {\it quasi-plurisubharmonic} (quasi-psh) 
 if it is locally the sum of a plurisubharmonic (psh) function and smooth one. 
 Let $\alpha$ be a smooth real closed $(1,1)$-form on $X.$  
 A quasi-psh function $\varphi$ is called {\it $\alpha$-plurisubharmonic} 
 ($\alpha$-psh) if $\alpha+dd^c\varphi\geq 0$ in the sense of currents. 
 We denote by $\PSH(X,\alpha)$ the set of all $\alpha$-psh functions on $X$. 
 The Lelong number of an $\alpha$-psh function $\varphi$ 
 at a point $x\in X$ is defined by $\nu(\varphi,x):=\nu(\alpha+dd^c\varphi,x)$. 

 Since in general the $\ddbar$-lemma does not hold on $X$, 
 we consider the $\ddbar$-cohomology and in particular the 
 space $H^{1,1}_\ddbar(X,\R)$ (see e.{\ke}g.\ \cite{Bo04}). 
 This space is finite dimensional, and if $\alpha$ is a smooth 
 real closed $(1,1)$-form on $X$ we denote its $\ddbar$-cohomology class 
 by $\{\alpha\}_\ddbar$. If $X$ is K\"ahler then $H^{1,1}_\ddbar(X,\R)=H^{1,1}(X,\R)$ 
 and we write $\{\alpha\}_\ddbar=\{\alpha\}$.
 
 \begin{Definition}\label{D:K-currents}
 A positive closed current $T$ of bidegree $(1,1)$ on $X$ is called a 
 {\it  K\"ahler current} if $T\geq \varepsilon\omega$ for some  
 $\varepsilon>0$. A class $\{\alpha\}_\ddbar$ is called {\it big} if it contains a K\"ahler current.  
\end{Definition}

\begin{Definition}\label{D:algsing}
A quasi-psh function $\varphi$ on $X$ is said to have \emph{analytic singularities} if there exists a coherent ideal sheaf $\cI\subset\cO_X$ and $c>0$ such that $\varphi$ can be written locally as 
\begin{equation}\label{e:algsing}
\varphi=\frac{c}{2}\,\log\big(\sum_{j=1}^m|f_j|^2\big)+\psi,
\end{equation}
where $f_1,\ldots,f_m$ are local generators of the ideal sheaf $\cI$ and $\psi$ is a smooth function. 
If $c$ is rational, we furthermore say that $\varphi$ has \emph{algebraic singularities}. Note that $\{\varphi=-\infty\}$ is the support of the subscheme $V(\cI)$ defined by $\cI$.
\end{Definition}

\begin{Definition}\label{D:algsinga}
A quasi-psh function $\varphi$ on $X$ is said to have \emph{almost analytic (resp.\ almost algebraic) singularities} if the following hold: 

$(i)$ $\{\varphi=-\infty\}$ is an analytic subset of $X$,

$(ii)$ $\varphi$ is smooth on $X\setminus\{\varphi=-\infty\}$,

$(iii)$ there exists a proper modification $\sigma:\wi X\to X$, obtained as a finite composition of blow-ups with smooth center and with blow-up locus contained in $\{\varphi=-\infty\}$, such that $\varphi\circ\sigma$ has analytic (resp.\ algebraic) singularities on $\wi X$.
\end{Definition}

If $L$ is a holomorphic line bundle on $X$ and $h^L$ is a singular Hermitian metric on $L$, written $h^L=h^L_0e^{-2\varphi}$ where $h^L_0$ is smooth and $\varphi$ is a quasi-psh function, we say that $h^L$ has (almost) analytic (resp.\ algebraic) singularities if $\varphi$ has (almost) analytic (resp.\ algebraic) singularities. A current $T=\alpha+dd^c\varphi$, where $\alpha$ is a smooth real closed $(1,1)$-form on $X$ and $\varphi$ is a quasi-psh function, is said to have (almost) analytic (resp.\ algebraic) singularities if $\varphi$ has (almost) analytic (resp.\ algebraic) singularities.

Suppose that $\{\alpha\}_\ddbar$ is big. By Demailly's regularization theorem \cite{D92} (see also \cite[Theorem 3.2]{DP04}), one can find a K\"ahler current $T\in\{\alpha\}_\ddbar$ with almost algebraic singularities. The {\em non-K\"ahler locus} of $\{\alpha\}_\ddbar$ is defined in 
\cite[Definition\ 3.16]{Bo04} as the set 
\begin{equation}\label{e:EnK}
E_{nK}\big(\{\alpha\}_\ddbar\big)=
\bigcap\big\{E_+(T):\,\text{$T\in\{\alpha\}_\ddbar$ K\"ahler current}\big\},
\end{equation}
where $E_+(T)=\{x\in X:\,\nu(T,x)>0\}$. Then, by Demailly's regularization theorem \cite{D92}, 
\[E_{nK}\big(\{\alpha\}_\ddbar\big)=
\bigcap\big\{E_+(T):\,\text{$T\in\{\alpha\}_\ddbar$ K\"ahler current with almost algebraic singularities}\big\},\]
hence $E_{nK}\big(\{\alpha\}_\ddbar\big)$ is an analytic subset of $X$. 
It is shown in \cite[Theorem\ 3.17]{Bo04} that there exists a K\"ahler current 
$T\in\{\alpha\}_\ddbar$ with almost algebraic singularities such that 
\[E_+(T)=E_{nK}\big(\{\alpha\}_\ddbar\big).\]

\medskip 

Let now $X$ be a complex space. We write $X=X_\reg \cup X_\sing$, 
where $X_\reg$ and $X_\sing$ are the sets of regular and singular points of $X$. 
We denote by $\PSH(X)$ the set of all psh functions on $X$, 
and by $\PSH(X,\alpha)$ the set of all $\alpha$-psh functions on $X$, 
where $\alpha$ is a smooth real closed $(1,1)$-form on $X$
(see e.g.\ \cite{CMM17,CMN19} for the definitions). 
If $X$ has pure dimension $n$, we consider currents on $X$ as defined in \cite{D85}. 
We denote by $[Z]$ the current of integration along a pure dimensional analytic 
subset $Z\subset X$. If $T$ is a current of bidegree $(1,1)$ on $X$ so that every 
$x\in X$ has a neighborhood $U$ such that $T=dd^c v$ on $U$ for some 
$v \in \PSH(U)$, then $T$ is positive and closed, and we say that $v$ is a 
local potential of $T$. A K\"ahler form on $X$ is a current $T$ as above whose 
local potentials $v$ extend to smooth strictly psh functions in local embeddings 
of $X$ to Euclidean spaces. We call $X$ a K\"ahler space if $X$ admits a 
K\"ahler form (see also \cite[p.\ 346]{Gr62}, \cite [Section 5]{Ohs87}). 
Hermitian forms on $X$ are defined in a similar way by means of local 
embeddings (see e.g.\ \cite{CMM17,CMN19}).

A function $u:X\to[-\infty,+\infty)$ is called {\em weakly psh}, resp.\ {\em weakly $\alpha$-psh}, 
if it is psh, resp.\ $\alpha$-psh, on $X_\reg$ and it is locally upper bounded on $X$. 
If $u$ is weakly psh, resp.\ weakly $\alpha$-psh, then $u$ is locally integrable on $X$ 
and $dd^cu\geq0$, resp.\ $\alpha+dd^cu\geq0$, in the sense of currents on $X$ 
(see \cite[Theorem 1.10]{D85}).  When $X$ is compact, a function $\rho:X\to\R$ 
is called H\"older continuous if, locally, it is H\"older continuous with respect to the metric induced by the Euclidean distance by means of a local embedding of $X$ into $\C^N$. 

If $(L,h)$ is a singular Hermitian holomorphic line bundle over $X$, 
the {\em curvature current} $c_1(L,h)$ of $h$ is defined as in the 
case when $X$ is smooth (see \cite{D90}, \cite{CMM17}). 
We say that $h$ is {\it positively curved}, resp.\ {\it strictly positively curved}, 
if $c_1(L,h)\geq 0$, resp.\ $c_1(L,h)\geq\varepsilon\omega$ for some 
$\varepsilon>0$ and some Hermitian form  $\omega$ on $X$.

\subsection{Bergman kernel functions and Fubini-Study currents}\label{SS:BK-FS}
Let $X$ be as in (A), $Y$ be as in (E), $\omega$ be a Hermitian form and $(L,h)$ be a singular Hermitian holomorphic line bundle on $X$ such that the metric $h$ is {\em bounded}. Since $X$ is compact, the space $H^0(X,L)$ is finite dimensional. The metric $h$ induces by restriction a singular metric $h|_Y$ on $L|_Y$ and we have $c_1(L|_Y,h|_Y)=c_1(L,h)|_Y$.

Let $H^0_{ (2)}(Y, L) = H^0_{ (2)}(Y,L|_Y,h|_Y, \omega^m|_Y)$ be the Bergman space of $L^2$-holomorphic sections of $L|_Y$ relative to the metric $h|_Y$ and the volume form $\omega^m /m!$ on $Y$, endowed with the inner product
\begin{equation}\label{e:inner_product}
(S,S'):=\int_Y\langle S,S'\rangle_h\,\frac{\omega^m}{m!}\,.
\end{equation}

Let $V$ be a subspace of $H^0_{ (2)}(Y, L)$, $r=\dim V$, and $S_1,\ldots,S_r$ be an orthonormal basis of $V$. The {\em Bergman kernel function} $P=P_V$ of $V$ is defined by 
\begin{equation}\label{e:Bkf}
P(x)=\sum_{j=1}^r|S_j(x)|_h^2,\;\;|S_j(x)|_h^2:=\langle S_j(x),S_j(x)\rangle_h,\;x\in Y.
\end{equation}
Note that this definition is independent of the choice of basis. Let $U$ be an open set in $Y$ such that $L$ has a local holomorphic frame $e_U$ on $U$. Then $|e_U|_h=e^{-\varphi_U}$, $S_j=s_je_U$, where $\varphi_U\in L^\infty(U)$, $s_j\in\cO_Y(U)$. It follows that 
\begin{equation}\label{e:Bk_local}
\log P\,|_U= \log\Big(\sum_{j=1}^r|s_j|^2 \Big)-2\varphi_U,
\end{equation}
which shows that $\log P\in L^1(Y,\omega^m|_Y)$.

The Kodaira map determined by $V$ is the meromorphic map given by 
\begin{equation}\label{e:Kodaira_alg_dual}
\Phi=\Phi_V:Y\dashrightarrow \P(V^\star)\,,\,\;\Phi(x)=\{S\in V:\,S(x)=0\},\;x\in Y\setminus \Bs(V),
\end{equation}
where a point in  $\P(V^\star)$ is identified with a hyperplane through the origin in $V$ and $\Bs(V)= \{x \in Y :\,S(x) = 0,\,\forall\,S\in V\}$ is the base locus of $V$. We define the {\em Fubini-Study current}  $\gamma=\gamma_V$ of $V$ by
\begin{equation}\label{e:FS}
\gamma:= \Phi^\star(\omega_\FS),
\end{equation}
where $\omega_\FS$ denotes the Fubini-Study form on $\P(V^\star)$. Then $\gamma$ is a positive closed current of bidegree $(1,1)$ on $Y$, and if $U$ is as above we have
\begin{equation}\label{e:FS_local}
\gamma\mid_U = \frac{1}{2}dd^c \log\Big(\sum_{j=1}^r |s_j|^2\Big).
\end{equation}
Hence by \eqref{e:Bk_local},
\begin{equation}\label{e:Bk_FS}
\gamma= c_1 (L, h)|_Y+ \frac{1}{2}\,dd^c \log P .
\end{equation}

\medskip

Let now $X,Y,L,\Sigma,\tau$ verify assumptions (A)-(E) and $H^0_0 (X|Y, L^p)$ 
be the space defined in \eqref{e:H00}. Since $h$ is a bounded metric on $L$ 
we have $H^0_0 (X|Y, L^p )\subset H^0_{(2)}(Y,L^p|_Y,h^p|_Y,\omega^m|_Y)$.
 The Bergman kernel function $P^Y_p$ and Fubini-Study current 
 $\gamma^Y_p$ of $H^0_0 (X|Y, L^p)$ are called the 
 {\em restricted partial Bergman kernel function}, 
 resp.\ {\em restricted partial Fubini-Study current}, 
 of the space of sections that vanish to order $\tau p$ along $\Sigma$. 
 We have the following variational principle:
\begin{equation}\label{e:var-char}
P^Y_p(x)= \max \left\lbrace |S(x)|^2_{h^p} :\,S\in H^0_0(X|Y, L^p ),\,\|S\|^Y_p 
= 1\right\rbrace,\;x\in Y,
\end{equation}
where $\|\LargerCdot\|^Y_p$ denotes the norm given by the inner product 
in $H^0_{(2)}(Y,L^p|_Y,h^p|_Y,\omega^m|_Y)$.

\subsection{$L^2$-extension theorem for vector bundles}
\label{SS:Hisamoto-ext}
We will need the following variant of the Ohsawa-Takegoshi-Manivel
$L^2$ extension theorem \cite{Man93,OhTa87}
due to Hisamoto \cite[Theorem 1.4]{H12} 
(see also \cite{D00,DKim10,Ohs01})
    
\begin{Theorem}\label{T:Hisamoto-l2-ext} 
Let $X$ be a projective manifold, $Y\subset X$ a complex submanifold, 
$\omega$ a K\"ahler form on $X$, and let $E \to X$  be a holomorphic 
vector bundle with a smooth Hermitian metric $h_E $. 
Then there exist positive constants $N = N (Y, X, h_E ,\omega),\,C =
C(Y, X)$, such that the following holds:

Let $L\to X$ be a holomorphic line bundle with a singular Hermitian metric 
$h_L e^{-2\varphi}$
such that its curvature satisﬁes $c_1(L,h_L)+ \ddc \varphi \geq N \omega$.
Then for any section $s \in H^0 (Y,E \otimes L)$ with
$\int_Y|s|^2 e^{-2\varphi} dV_{\omega,Y} < \infty$,
there exists a section $\widetilde s\in H^0 (X,E \otimes L)$ 
such that $\widetilde s|_Y = s$ and
\[\int_X |\wi s|^2e^{-2\varphi} dV_{\omega,X} \leq  C
\int_Y|s|^2e^{-2\varphi} dV_{\omega,Y},\]
where $|s|$ denotes the norm of $s$ relative to the smooth metric $h_E\otimes h_L$.
\end{Theorem}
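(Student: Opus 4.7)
The plan is to combine the classical Ohsawa-Takegoshi-Manivel $L^2$-extension theorem, applied locally on coordinate patches, with a global H\"ormander-type $\db$-solve whose weight is engineered to force vanishing along $Y$. The hypothesis $c_1(L,h_L)+\ddc\varphi\geq N\omega$ with $N$ sufficiently large is precisely what powers the global $\db$-estimate: $N$ must absorb the Chern curvature of $(E,h_E)$, the geometry of the pair $(Y,X,\omega)$, and the negativity introduced by the singular twist used to force vanishing on $Y$.

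First, cover $X$ by finitely many coordinate charts $\{U_\alpha\}$ trivializing both $E$ and $L$, arranged so that on each $U_\alpha$ meeting $Y$ there exist holomorphic coordinates $(z',z'')$ with $Y\cap U_\alpha=\{z'=0\}$ and $|z'|=\codim(Y,X)$. On each such chart apply the local Ohsawa-Takegoshi-Manivel extension to produce a holomorphic $s_\alpha\in H^0(U_\alpha,E\otimes L)$ with $s_\alpha|_{Y\cap U_\alpha}=s|_{Y\cap U_\alpha}$ and the standard $L^2$-bound relative to $e^{-2\varphi}$. Glue via a partition of unity $\{\chi_\alpha\}$: set $\wi s_0:=\sum_\alpha\chi_\alpha s_\alpha$. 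This yields a smooth section of $E\otimes L$ on $X$ with $\wi s_0|_Y=s$ and global weighted $L^2$-norm bounded by $\int_Y|s|^2 e^{-2\varphi}\,dV_{\omega,Y}$, up to a constant depending only on $(Y,X,h_E,\omega)$. Next, let $\beta:=\db \wi s_0$; this is a smooth $E\otimes L$-valued $(0,1)$-form vanishing on $Y$. Introduce an auxiliary quasi-psh weight $\psi_Y$ on $X$ which is smooth outside $Y$ and has logarithmic singularities along $Y$, calibrated so that $L^2$-integrability against $e^{-2\psi_Y}$ forces vanishing on $Y$ while $\int_X|\beta|^2 e^{-2\psi_Y}$ remains finite (using the vanishing of $\beta$ on $Y$). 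Using the twisted H\"ormander-Demailly $L^2$-estimate, solve $\db u=\beta$ on $X$ with the combined weight $2\varphi+2\psi_Y$ and obtain
\[\int_X |u|^2 e^{-2\varphi}\,dV_{\omega,X}\leq C\int_Y |s|^2 e^{-2\varphi}\,dV_{\omega,Y},\]
provided that $c_1(L,h_L)+\ddc\varphi$ dominates the Chern curvature of $E$ plus a multiple of $\omega$ coming from $\ddc\psi_Y$; this is the meaning of the threshold $N=N(Y,X,h_E,\omega)$. Finally $\wi s:=\wi s_0-u$ is holomorphic on $X$, equal to $s$ on $Y$ (as $u|_Y=0$), and satisfies the asserted $L^2$-inequality.

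The main obstacle is carrying out the twisted $L^2$-estimate with the correct bookkeeping: one must choose the Ohsawa twist (a function of $\log|z'|^2$ composed with a judiciously chosen one-variable function) so that, after the integration by parts in the Bochner-Kodaira-Nakano identity, the effective curvature stays nonnegative while the weighted $L^2$-norm of $\beta$ against $e^{-2(\varphi+\psi_Y)}$ is controlled by the first-order vanishing of $\beta$ along $Y$. Executing this ensures that the constant $C$ depends only on $(Y,X)$ and $N$ only on $(Y,X,h_E,\omega)$ — and not on $h_L$ or $\varphi$ — which is the crucial uniformity distinguishing the theorem from a naive application of Nadel vanishing.
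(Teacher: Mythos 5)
The paper does not prove this theorem; it is imported verbatim from Hisamoto [H12, Theorem 1.4] with pointers to the Ohsawa--Takegoshi--Manivel literature, so there is no in-paper argument to compare against. Evaluating your sketch on its own terms: the skeleton (local extension, partition-of-unity gluing to a smooth global $\wi s_0$, then a $\db$-correction weighted so the solution vanishes along $Y$) is the standard one, but as written it has a genuine gap for singular $\varphi$.

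You form a fixed smooth extension $\wi s_0$ supported in a tube of fixed positive radius about $Y$, set $\beta=\db\wi s_0$, and claim to bound $\int_X|\beta|^2 e^{-2(\varphi+\psi_Y)}$ by $\int_Y|s|^2 e^{-2\varphi}$. That step fails: $\varphi$ is only psh, so it can have poles arbitrarily close to $Y$ but disjoint from $Y$, in which case $\int_X|\wi s_0|^2 e^{-2\varphi}$ (and hence the $\beta$-integral) is already infinite while $\int_Y|s|^2 e^{-2\varphi}$ stays finite --- see $Y=\{0\}\subset\C$ with $\varphi(z)=\log|z-\varepsilon_0|$ for small $\varepsilon_0>0$. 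The first-order vanishing of $\beta$ along $Y$ tames the singularity of $e^{-2\psi_Y}$ but is irrelevant to poles of $\varphi$ off $Y$. The Ohsawa twist, which you defer to a closing paragraph labelled ``the main obstacle,'' is exactly the device that repairs this: one works with an $\varepsilon$-dependent cutoff $\chi_\varepsilon(\log(|z'|^2+\varepsilon^2))$ that shrinks the support of $\beta_\varepsilon$ to a tube of radius $\sim\varepsilon$, and the twisted Bochner--Kodaira--Nakano inequality (with the auxiliary one-variable functions of Ohsawa--Takegoshi) is set up so that the weighted norm of $\beta_\varepsilon$ is bounded by $\int_Y|s|^2e^{-2\varphi}$ uniformly in $\varepsilon$, with the extension obtained as a weak limit $\varepsilon\to0$. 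That family and passage to the limit are the heart of the argument, not a bookkeeping refinement; a one-shot $\db$-solve as proposed cannot give the stated bound. Two secondary points you should also address: $\psi_Y$ must be a globally defined quasi-psh function with $dd^c\psi_Y\geq-C_0\omega$, which for a general submanifold requires a construction (e.g.\ a blow-up of $Y$ and a log of a section of the exceptional divisor); and since $E$ has rank $\geq1$, the $L^2$ estimate needs $E\otimes L$ to be Nakano (not merely Griffiths) positive, which is the reason $N$ must depend on $h_E$.
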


  
\section{Dimension of restricted spaces of sections 
vanishing along subvarieties}\label{S:Dim} 

We start by recalling here the characterization of big triplets 
(see Definition \ref{D:big}) by means of divisorizations that was obtained in 
\cite{CMN19}. We then prove Theorem \ref{T:big}.

\subsection{Big triplets and divisorizations}\label{SS:Div}
 
The characterization of big triplets $(L,\Sigma,\tau )$ relies on the following 
consequence of Hironaka's theorem on resolution of singularities: 

\begin{Proposition}{\cite[Proposition 1.4]{CMN19}}\label{P:divisorization} 
Let $X$ and $\Sigma$ verify assumptions (A) and (C). 
Then there exist a compact complex manifold $\wi X$ of dimension $n$ 
and a surjective holomorphic map $\pi:\wi X\to X$, 
given as the composition of finitely many blow-ups with smooth center, 
with the following properties:

(i) There exists an analytic subset $X_\pi\subset X$ such that 
$\dim X_\pi\leq n-2$, $X_\pi\subset X_\sing\cup\Sigma^\cup$, 
$X_\sing\subset X_\pi$, $\Sigma_j\subset X_\pi$ if $\dim\Sigma_j\leq n-2$, 
$E_\pi=\pi^{-1}(X_\pi)$ is a divisor in $\wi X$ that has only normal crossings, 
and $\pi:\wi X\setminus E_\pi\to X\setminus X_\pi$ is a biholomorphism. 

(ii) There exist (connected) smooth complex hypersurfaces 
$\wi\Sigma_1,\ldots,\wi\Sigma_\ell$ in $\wi X$, which have only normal crossings, 
such that $\pi(\wi\Sigma_j)=\Sigma_j$. Moreover, if $\dim\Sigma_j=n-1$ then 
$\wi\Sigma_j$ is the final strict transform of $\Sigma_j$, 
and if $\dim\Sigma_j\leq n-2$ then $\wi\Sigma_j$ 
is an irreducible component of $E_\pi$.

(iii) If $F\to X$ is a holomorphic line bundle and 
$S\in H^0(X,F)$ then $\ord(S,\Sigma_j)=\ord(\pi^\star S,\wi\Sigma_j)$, 
for all $j=1,\ldots,\ell$.
\end{Proposition}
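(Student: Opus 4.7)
The plan is to build $\pi: \widetilde X \to X$ as a composition of smooth-centered blow-ups in two stages, invoking Hironaka's embedded desingularization/principalization theorem. The first stage simultaneously desingularizes $X$ and resolves the union of $X_\sing$ with the codimension-one components of $\Sigma$ to a simple normal crossings (snc) configuration; the second stage converts each lower-dimensional $\Sigma_j$ into a component of the exceptional divisor by blowing up its strict transform.

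In detail, first I would apply Hironaka's resolution to the ideal sheaf of $X_\sing \cup \bigcup_{\dim \Sigma_j = n-1} \Sigma_j$ (working in local embeddings of $X$, which are available because $X$ is normal hence locally embeddable in $\C^N$). This produces a modification $\pi_1: X_1 \to X$, built from finitely many blow-ups with smooth centers lying above $X_\sing \cup \Sigma^\cup$, such that $X_1$ is smooth and the reduced preimage of $X_\sing$ together with the strict transforms of the hypersurface components of $\Sigma$ forms an snc divisor. Because $\Sigma_j \not\subset X_\sing$, the strict transform of each hypersurface $\Sigma_j$ is a well-defined smooth hypersurface $\widetilde \Sigma_j$ equal to the closure of $\pi_1^{-1}(\Sigma_j \cap X_\reg \setminus X_\sing)$. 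Next, I would enumerate the $\Sigma_j$ with $\dim \Sigma_j \leq n-2$, pass to their (smooth) strict transforms in $X_1$, and successively blow these up, inserting additional snc-preserving blow-ups as needed to keep everything in general position; the exceptional divisor of the blow-up centered on the strict transform of $\Sigma_j$ is taken as $\widetilde \Sigma_j$. The composition $\pi$ of all these blow-ups yields $\widetilde X$, and one sets $X_\pi := \pi(E_\pi)$ where $E_\pi$ is the union of all exceptional divisors together with the $\widetilde \Sigma_j$'s of codimension-one origin; by construction $X_\pi \subset X_\sing \cup \Sigma^\cup$, $\codim X_\pi \geq 2$, and $\pi: \widetilde X \setminus E_\pi \to X \setminus X_\pi$ is biholomorphic.

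For property (iii), note that the vanishing order of $S \in H^0(X,F)$ along an irreducible hypersurface $\Sigma_j$ (or along $\Sigma_j \subset X_\pi$ of codimension $\geq 2$, interpreted through the induced divisor $\widetilde \Sigma_j$) is determined by the germ at a generic point of the regular locus of $\Sigma_j$ outside $X_\pi$. Since $\Sigma_j^\reg \setminus X_\pi$ is open and dense in $\Sigma_j$, and $\pi$ identifies a neighborhood of such a generic point with a neighborhood of its unique preimage in $\widetilde \Sigma_j \setminus E_\pi$, the local equations of $S$ and of $\pi^\star S$ coincide, giving $\ord(S, \Sigma_j) = \ord(\pi^\star S, \widetilde \Sigma_j)$.

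The main obstacle I expect is the coordination of stage two: after blowing up the strict transform of one low-dimensional $\Sigma_j$, the strict transforms of other $\Sigma_k$ may cease to meet the evolving exceptional divisor transversally, and the resulting $\widetilde \Sigma_k$ could fail to be smooth or to sit in normal-crossings position with the previously created components. Handling this requires choosing the order of blow-ups carefully and, between consecutive center-blow-ups, appealing again to Hironaka's theorem in its functorial (embedded) form to restore the snc condition — all while ensuring the new centers remain inside $X_\sing \cup \Sigma^\cup$ so that $X_\pi$ stays of codimension $\geq 2$. This combinatorial bookkeeping of the resolution sequence, rather than any single blow-up, is the crux of the argument.
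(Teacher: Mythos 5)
This proposition is cited from \cite[Proposition 1.4]{CMN19}; the present paper does not reprove it, so there is no in-paper proof to compare against. Evaluating your proposal on its own merits: the two-stage Hironaka strategy (first desingularize $X$ and put the hypersurface $\Sigma_j$'s in snc position, then blow up the low-dimensional $\Sigma_j$'s to turn them into exceptional divisors) is indeed the natural and correct framework. However, two steps as written would not go through.

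First, your definition of $E_\pi$ is incompatible with conclusion (i). You take $E_\pi$ to be ``the union of all exceptional divisors together with the $\widetilde\Sigma_j$'s of codimension-one origin,'' and then set $X_\pi := \pi(E_\pi)$. But for a hypersurface $\Sigma_j$, the strict transform $\widetilde\Sigma_j$ maps \emph{onto} $\Sigma_j$, so including it in $E_\pi$ forces $\Sigma_j \subset X_\pi$ and hence $\dim X_\pi = n-1$, contradicting $\dim X_\pi \leq n-2$. It also destroys the biholomorphism $\pi: \widetilde X \setminus E_\pi \to X\setminus X_\pi$, since $\pi$ is a biholomorphism near a generic point of $\widetilde\Sigma_j$. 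The right definition: $E_\pi$ consists only of the exceptional divisors created by the blow-ups (equivalently, the locus where $\pi$ fails to be an isomorphism), so that $X_\pi = \pi(E_\pi)$ is the union of the blow-up centers' images and has codimension $\geq 2$. For hypersurface $\Sigma_j$, $\widetilde\Sigma_j$ is the strict transform and is \emph{not} a component of $E_\pi$; for low-dimensional $\Sigma_j$, $\widetilde\Sigma_j$ is an exceptional divisor and \emph{is} a component of $E_\pi$, exactly as stated in (ii).

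Second, your argument for (iii) breaks down precisely in the low-dimensional case. You argue via ``the germ at a generic point of the regular locus of $\Sigma_j$ outside $X_\pi$.'' But when $\dim\Sigma_j \leq n-2$, conclusion (i) puts $\Sigma_j \subset X_\pi$, so $\Sigma_j^{\reg}\setminus X_\pi = \emptyset$ and there is no such point. In that case $\pi$ is nowhere a local biholomorphism along $\Sigma_j$. What one must use instead is: at a generic smooth point $x\in\Sigma_j$ not lying on any other $\Sigma_k$, the blow-ups performed before the one centered on the strict transform of $\Sigma_j$ are local biholomorphisms near $x$, and the blow-ups performed afterward are local biholomorphisms near the generic point of the exceptional divisor $\widetilde\Sigma_j$. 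Thus near $x$ the map $\pi$ is locally just the blow-up of the smooth center $\Sigma_j$, and the elementary fact that the divisorial valuation along the exceptional divisor of a smooth-center blow-up equals the order of vanishing along the center gives $\ord(S,\Sigma_j)=\ord(\pi^\star S,\widetilde\Sigma_j)$. Your write-up treats the hypersurface case and the low-dimensional case by the same (hypersurface-only) argument, which is a genuine gap.
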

 
\begin{Definition}{\cite[Definition 1.5]{CMN19}}\label{D:divisorization} 
If $\wi X$, $\pi$, $\wi\Sigma:=(\wi\Sigma_1,\ldots,\wi\Sigma_\ell)$, 
verify the conclusions of Proposition \ref{P:divisorization}, 
we say that $(\wi X,\pi,\wi\Sigma)$ is a divisorization of $(X,\Sigma)$.
\end{Definition} 

The following analog of Ji-Shiffman's criterion for big line bundles 
\cite[Theorem 4.6]{JS93} (see
also \cite{Bon93}, \cite[Theorem 2.3.30]{MM07}) was obtained in 
\cite[Theorem 1.6]{CMN19}:

\begin{Theorem}\label{T:CMN3} 
Let $X,L,\Sigma,\tau$ verify assumptions (A)-(D). The following are equivalent: 

(i) $(L,\Sigma,\tau)$ is big;

(ii) For every divisorization $(\wi X,\pi,\wi\Sigma)$ of $(X,\Sigma)$, 
there exists a singular Hermitian metric $h^\star$ on $\pi^\star L$ 
such that $c_1(\pi^\star L,h^\star)-\sum_{j=1}^\ell\tau_j[\wi \Sigma_j]$ 
is a K\"ahler current on $\wi X$ (see Definition \ref{D:K-currents}); 

(iii) There exist a divisorization $(\wi X,\pi,\wi\Sigma)$ of $(X,\Sigma)$ 
and a singular Hermitian metric $h^\star$ on $\pi^\star L$ 
such that $c_1(\pi^\star L,h^\star)-\sum_{j=1}^\ell\tau_j[\wi \Sigma_j]$ 
is a K\"ahler current on $\wi X$;

(iv) There exist $p_0\in\N$ and $c>0$ such that 
$\dim H^0_0(X,L^p)\geq cp^n$ for all $p\geq p_0$.
\end{Theorem}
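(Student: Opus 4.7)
The plan is to prove the cyclic chain (iv) $\Rightarrow$ (i) $\Rightarrow$ (ii) $\Rightarrow$ (iii) $\Rightarrow$ (iv). The implications (iv) $\Rightarrow$ (i) and (ii) $\Rightarrow$ (iii) are immediate from the definitions. Observe next that either (i) or (iv) forces $\dim H^0(X,L^p)\geq\dim H^0_0(X,L^p)\gtrsim p^n$, so $L$ itself is big and $X$ is Moi\v{s}ezon; after composing any given divisorization with further smooth blow-ups if necessary, I may assume throughout that $\wi X$ is projective and carries a K\"ahler form $\wi\omega$. The analytic content of the theorem therefore sits in (iii) $\Rightarrow$ (iv) and (i) $\Rightarrow$ (ii).

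For (iii) $\Rightarrow$ (iv), I fix the metric $h^\star$ satisfying $c_1(\pi^\star L,h^\star)-\sum_j\tau_j[\wi\Sigma_j]\geq\varepsilon\wi\omega$. By Demailly's regularization theorem \cite{D92,DP04}, $h^\star$ may be replaced by a metric with analytic singularities still satisfying the K\"ahler-current inequality with $\varepsilon$ halved; the positivity forces the local weight $\varphi^\star$ of $h^\star$ to have Lelong number at least $\tau_j$ along each $\wi\Sigma_j$. Bonavero's singular holomorphic Morse inequalities \cite{Bon93} then yield
\begin{equation*}
\dim H^0\bigl(\wi X,\pi^\star L^p\otimes\cI((h^\star)^p)\bigr)\geq\frac{p^n}{n!}\int_{\wi X}\bigl(c_1(\pi^\star L,h^\star)_{\mathrm{ac}}\bigr)^n - o(p^n)\geq Cp^n,
\end{equation*}
because the absolutely continuous part of the curvature dominates $\varepsilon\wi\omega$ (the singular tail carries $\sum\tau_j[\wi\Sigma_j]$). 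A local integrability check in coordinates with $\wi\Sigma_j=\{z_1=0\}$ and $\varphi^\star\sim\tau_j\log|z_1|$ shows any section in $\cI((h^\star)^p)$ vanishes to order at least $t_{j,p}$ along $\wi\Sigma_j$; the isomorphism $H^0(\wi X,\pi^\star L^p)\cong H^0(X,L^p)$ (valid because $X$ is normal and $\pi$ is a modification) then gives $\dim H^0_0(X,L^p)\geq Cp^n$.

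For (i) $\Rightarrow$ (ii), fix an arbitrary divisorization and set $\wi L_p:=\pi^\star L^p\otimes\cO(-\sum t_{j,p}\wi\Sigma_j)$; Proposition \ref{P:divisorization}(iii) supplies $H^0(\wi X,\wi L_p)\cong H^0_0(X,L^p)$, so the hypothesis provides a subsequence with $\dim H^0(\wi X,\wi L_p)\geq cp^n$. Fix an ample line bundle $A$ on $\wi X$ with a smooth zero divisor $D_A$; the restriction exact sequence produces $\dim H^0(\wi X,\wi L_p\otimes A^{-1})\geq cp^n-O(p^{n-1})$, so a single large $p$ in the subsequence affords a non-zero section $s\in H^0(\wi X,\wi L_p\otimes A^{-1})$. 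Equipping each $\cO(\wi\Sigma_j)$ with the canonical singular metric of curvature $[\wi\Sigma_j]$, $A$ with a smooth metric of curvature $\omega_A\geq\varepsilon_0\wi\omega$, and $\wi L_p\otimes A^{-1}$ with the singular metric of curvature $[s=0]\geq 0$ coming from $s$ (Poincar\'e-Lelong), the factorization $\pi^\star L^p=(\wi L_p\otimes A^{-1})\otimes\cO(\sum t_{j,p}\wi\Sigma_j)\otimes A$ and the $p$-th root deliver a singular metric $h^\star$ on $\pi^\star L$ with
\begin{equation*}
c_1(\pi^\star L,h^\star)=\tfrac{1}{p}[s=0]+\sum_j\tfrac{t_{j,p}}{p}[\wi\Sigma_j]+\tfrac{1}{p}\omega_A\geq\sum_j\tau_j[\wi\Sigma_j]+\tfrac{\varepsilon_0}{p}\wi\omega,
\end{equation*}
the last inequality using $t_{j,p}/p\geq\tau_j$. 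This is the desired K\"ahler-current inequality on the given (arbitrary) divisorization.

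The main obstacle is (i) $\Rightarrow$ (ii): upgrading bare dimensional growth into genuine positivity of the class $\pi^\star L-\sum\tau_j\wi\Sigma_j$ is not automatic, since a raw Kodaira-map construction using the sections of $H^0_0(X,L^p)$ produces only a positive, not strictly positive, curvature current. The twist by $A^{-1}$ and the cohomological count, relying on the sharp exponent $p^n$ in (i) outpacing the $O(p^{n-1})$ restriction term, is precisely what extracts the ample surplus $\tfrac{1}{p}\omega_A$ needed to upgrade the residual current on $\pi^\star L$ to a genuine K\"ahler current.
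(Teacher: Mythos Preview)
The paper does not supply a proof of this theorem; it is quoted from \cite[Theorem~1.6]{CMN19} and used as a black box for the subsequent arguments. Your cyclic scheme and the two substantive implications --- Bonavero's singular Morse inequalities for (iii)$\Rightarrow$(iv), and the twist-by-ample plus Poincar\'e--Lelong construction for (i)$\Rightarrow$(ii) --- are exactly the expected route, and are indeed the method of \cite{CMN19}.

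Two small gaps deserve mention. In (iii)$\Rightarrow$(iv), the local integrability condition $\int|z_1|^{2k-2p\tau_j}\,dV<\infty$ only yields $k>p\tau_j-1$, hence $k\geq\lfloor p\tau_j\rfloor$; when $p\tau_j\notin\N$ this is $t_{j,p}-1$, one short of what $H^0_0$ demands. The fix (used both in \cite{CMN19} and in the present paper's proof of Theorem~\ref{T:big}, see \eqref{e:divcoh3}) is to first perturb the big class to $c_1(\pi^\star L)-\sum r_j\{\wi\Sigma_j\}$ with rational $r_j>\tau_j$, still big by Lemma~\ref{L:nKpert}, so that the multiplier-ideal vanishing order $\lfloor pr_j\rfloor$ exceeds $t_{j,p}$ for all large $p$. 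In (i)$\Rightarrow$(ii), replacing the given divisorization $\wi X$ by a projective blow-up $\sigma:\wi X'\to\wi X$ produces the K\"ahler current on $\wi X'$, not on $\wi X$ itself; you need one more line pushing forward by $\sigma_\star$. Choosing the K\"ahler form $\wi\omega'$ on $\wi X'$ with $\wi\omega'\geq\sigma^\star\wi\omega$ ensures $\sigma_\star$ of a K\"ahler current is again a K\"ahler current, and $\sigma_\star[\wi\Sigma_j']=[\wi\Sigma_j]$ lands you in the correct class on the original $\wi X$. With these two patches the argument is complete.
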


Given a triplet $(L,\Sigma,\tau)$ and a divisorization $(\wi X,\pi,\wi\Sigma)$ of $(X,\Sigma)$, 
we consider the cohomology class
\begin{equation}\label{e:divcoh1}
\Theta_\pi=\Theta_{\pi,L,\Sigma,\tau}:=c_1(\pi^\star L)-\sum_{j=1}^\ell\tau_j\{\wi\Sigma_j\}_\ddbar\,,
\end{equation}
where $c_1(\pi^\star L)$ is the first Chern class of $\pi^\star L$ 
and $\{\wi\Sigma_j\}_\ddbar\in H^{1,1}_\ddbar(\wi X,\R)$ 
is the class of the current of integration along $\wi\Sigma_j$. 
We have the following simple lemma whose proof is left to the interested reader.

\begin{Lemma}\label{L:divcoh}
In the above setting, the following are equivalent:

(i) There exists a singular Hermitian metric $h^\star$ on $\pi^\star L$ 
such that $c_1(\pi^\star L,h^\star)-\sum_{j=1}^\ell\tau_j[\wi \Sigma_j]$ 
is a K\"ahler current on $\wi X$;

(ii) The class $\Theta_\pi\in H^{1,1}_\ddbar(\wi X,\R)$ is big.
\end{Lemma}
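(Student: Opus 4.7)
The equivalence is a direct translation of the geometric condition (i) into the cohomological condition (ii), using only the Lelong--Poincar\'e formula and the $\ddbar$-lemma on the compact complex manifold $\wi X$. No deep input is required, which is presumably why the authors leave the proof to the reader. The plan is to treat the two directions in turn.

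For (i) $\Rightarrow$ (ii): assume the metric $h^\star$ exists and set $T := c_1(\pi^\star L, h^\star) - \sum_{j=1}^\ell\tau_j [\wi\Sigma_j]$, which is K\"ahler by hypothesis. Since $c_1(\pi^\star L, h^\star)$ represents the Chern class $c_1(\pi^\star L)$ in $H^{1,1}_\ddbar(\wi X,\R)$ and each $[\wi\Sigma_j]$ represents $\{\wi\Sigma_j\}_\ddbar$, I observe that $\{T\}_\ddbar = \Theta_\pi$, so $\Theta_\pi$ contains the K\"ahler current $T$ and is therefore big by Definition \ref{D:K-currents}.

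For (ii) $\Rightarrow$ (i): fix a smooth Hermitian metric $h_0^\star$ on $\pi^\star L$, and for each $j$ fix a smooth Hermitian metric $h_j$ on the line bundle $\cO_{\wi X}(\wi\Sigma_j)$ together with the canonical holomorphic section $s_j$ cutting out $\wi\Sigma_j$. The Lelong--Poincar\'e formula reads
\[
[\wi\Sigma_j] = c_1(\cO_{\wi X}(\wi\Sigma_j), h_j) + dd^c \log|s_j|_{h_j},
\]
with $\log|s_j|_{h_j}$ quasi-psh on $\wi X$. Consequently the smooth closed $(1,1)$-form
\[
\beta := c_1(\pi^\star L, h_0^\star) - \sum_{j=1}^\ell \tau_j\, c_1(\cO_{\wi X}(\wi\Sigma_j), h_j)
\]
represents the class $\Theta_\pi$. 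Given a K\"ahler current $T \in \Theta_\pi$, the usual $\ddbar$-lemma for cohomologous currents on a compact complex manifold produces a distribution $\psi$ with $T = \beta + dd^c\psi$; since $T \geq \varepsilon\omega$ for some K\"ahler form $\omega$ on $\wi X$ and $\beta$ is smooth, one has $dd^c \psi \geq -C\omega$, so $\psi$ is quasi-psh. Setting $\varphi := \psi + \sum_{j=1}^\ell \tau_j \log|s_j|_{h_j}$ (still quasi-psh) and $h^\star := h_0^\star e^{-2\varphi}$ defines a singular Hermitian metric on $\pi^\star L$, and a direct computation using $c_1(\pi^\star L, h^\star) = c_1(\pi^\star L, h_0^\star) + dd^c\varphi$ together with Lelong--Poincar\'e gives
\[
c_1(\pi^\star L, h^\star) - \sum_{j=1}^\ell \tau_j [\wi\Sigma_j] = \beta + dd^c\psi = T,
\]
which is K\"ahler, establishing (i).

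The only delicate point is the global existence of the potential $\psi$ with $T - \beta = dd^c \psi$ on $\wi X$; this is a standard consequence of $\{T\}_\ddbar = \{\beta\}_\ddbar$ in the $\ddbar$-cohomology of the compact manifold $\wi X$, and the quasi-psh regularity of $\psi$ is forced by $T$ being a K\"ahler current and $\beta$ being smooth. Everything else is bookkeeping with Chern classes and Lelong--Poincar\'e.
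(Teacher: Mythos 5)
The paper leaves the proof of Lemma \ref{L:divcoh} to the reader, so there is no reference argument to compare against; your proof is the natural one and it is correct. The direction (i) $\Rightarrow$ (ii) is immediate from the definition of a big class, and (ii) $\Rightarrow$ (i) correctly uses the Lelong--Poincar\'e formula to pass between a smooth representative $\beta$ of $\Theta_\pi$ and a potential of the K\"ahler current $T$, then absorbs the $\log|s_j|_{h_j}$ terms into the weight of the singular metric. One terminological nitpick: the existence of $\psi$ with $T-\beta=dd^c\psi$ is not an application of ``the $\ddbar$-lemma'' in the usual (K\"ahler) sense, but simply the definition of two closed $(1,1)$-currents lying in the same class in $H^{1,1}_{\ddbar}(\wi X,\R)$ --- which matters here since $\wi X$ is not assumed K\"ahler in this lemma. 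Your closing remark shows you are aware of this; the mathematics is sound, only the label is slightly off.
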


\subsection{Non-K\"ahler loci and blow-ups}\label{SS:nKlocus}
  
We will need certain results regarding the non-K\"ahler locus 
of big cohomology classes. The proofs are included for the convenience of the reader.
    
\begin{Lemma}\label{L:nKpert} Let $(X,\omega)$ 
be a compact Hermitian manifold and 
$\{\alpha\}_\ddbar\in H^{1,1}_\ddbar(X,\R)$ be a big cohomology class. 
Then there exists $\varepsilon>0$ such that if $\eta$ 
is a smooth real closed $(1,1)$-form on $X$ with 
$\eta\geq-\varepsilon\omega$ then $\{\alpha+\eta\}_\ddbar$
is big and $E_{nK}\big(\{\alpha+\eta\}_\ddbar\big)\subset 
E_{nK}\big(\{\alpha\}_\ddbar\big)$.
\end{Lemma}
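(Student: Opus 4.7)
The plan is to use Demailly's regularization together with the characterization of the non-Kähler locus recalled in the excerpt (just before the lemma), which says that there exists a single Kähler current $T_0 \in \{\alpha\}_\ddbar$ with almost algebraic singularities realizing the non-Kähler locus, namely $E_+(T_0) = E_{nK}(\{\alpha\}_\ddbar)$.

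First I would fix such a $T_0$. Write $T_0 = \alpha + dd^c\varphi$ where $\varphi$ is quasi-psh with almost algebraic singularities, and pick $\delta>0$ with $T_0 \geq \delta\omega$. Then I would set $\varepsilon := \delta/2$ (any value strictly less than $\delta$ works). For any smooth real closed $(1,1)$-form $\eta$ satisfying $\eta \geq -\varepsilon\omega$, form the current
\[
T_\eta := T_0 + \eta = (\alpha+\eta) + dd^c\varphi \in \{\alpha+\eta\}_\ddbar.
\]
Then $T_\eta \geq (\delta - \varepsilon)\omega = (\delta/2)\omega$, so $T_\eta$ is a Kähler current in $\{\alpha+\eta\}_\ddbar$. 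This immediately shows that $\{\alpha+\eta\}_\ddbar$ is big (Definition \ref{D:K-currents}).

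Next, because $\eta$ is a smooth form, adding $\eta$ does not affect Lelong numbers, so $\nu(T_\eta,x) = \nu(T_0,x)$ for every $x\in X$. Consequently $T_\eta$ inherits the almost algebraic singularities of $\varphi$ and
\[
E_+(T_\eta) = E_+(T_0) = E_{nK}(\{\alpha\}_\ddbar).
\]
By the definition of the non-Kähler locus as the intersection in \eqref{e:EnK}, applied to the big class $\{\alpha+\eta\}_\ddbar$, we have $E_{nK}(\{\alpha+\eta\}_\ddbar)\subset E_+(T_\eta)$, and combining with the previous identity yields
\[
E_{nK}(\{\alpha+\eta\}_\ddbar) \subset E_{nK}(\{\alpha\}_\ddbar),
\]
which is the desired inclusion.

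There is essentially no obstacle here: the only subtlety worth noting is the need to invoke the stronger form of Demailly's regularization (quoted in the excerpt after \eqref{e:EnK}) guaranteeing the existence of a single Kähler current $T_0$ whose positivity locus $E_+(T_0)$ equals the full intersection defining $E_{nK}(\{\alpha\}_\ddbar)$. Without this, one would only obtain the inclusion up to an arbitrarily small enlargement, which is insufficient.
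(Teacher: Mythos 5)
Your proof is correct and follows essentially the same route as the paper's: fix a K\"ahler current $T\in\{\alpha\}_\ddbar$ with $T\ge\delta\omega$ and $E_+(T)=E_{nK}(\{\alpha\}_\ddbar)$ (this existence, via \cite[Theorem 3.17]{Bo04}, is exactly the key input), take $\varepsilon<\delta$, and observe that $T+\eta$ is a K\"ahler current in $\{\alpha+\eta\}_\ddbar$ with $E_+(T+\eta)=E_+(T)$ since adding the smooth form $\eta$ leaves Lelong numbers unchanged. The paper states the Lelong-number invariance implicitly, whereas you spell it out; otherwise the arguments coincide.
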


\begin{proof}
Let $T\in\{\alpha\}_\ddbar$ be a K\"ahler current such that 
$E_+(T)=E_{nK}\big(\{\alpha\}_\ddbar\big)$ and $T\geq\delta\omega$ 
for some $\delta>0$. If $\varepsilon<\delta$ and $\eta\geq-\varepsilon\omega$ 
then $T+\eta\in\{\alpha+\eta\}_\ddbar$ is a K\"ahler current and 
$E_{nK}\big(\{\alpha+\eta\}_\ddbar\big)\subset E_+(T+\eta)=E_+(T)$.
\end{proof}

\begin{Lemma}\label{L:nKblow} Let $(X,\omega)$ 
be a compact Hermitian manifold and $\sigma:\wi X\to X$ 
be a finite composition of blow-ups with smooth center with 
final exceptional divisor $E$. If $\{\alpha\}_\ddbar\in H^{1,1}_\ddbar(X,\R)$ 
is a big cohomology class then 
$\sigma^\star\{\alpha\}_\ddbar\in H^{1,1}_\ddbar(\wi X,\R)$ is big and 
\[E_{nK}\big(\sigma^\star\{\alpha\}_\ddbar\big)\subset
\sigma^{-1}\big(E_{nK}\big(\{\alpha\}_\ddbar\big)\big)\cup E.\]

\end{Lemma}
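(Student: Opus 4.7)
The plan is to construct, for every $x\in\wi X\setminus\big(\sigma^{-1}(E_{nK}(\{\alpha\}_\ddbar))\cup E\big)$, a single K\"ahler current $\wi T$ in the class $\sigma^\star\{\alpha\}_\ddbar$ with $\nu(\wi T,x)=0$; this simultaneously yields the bigness of $\sigma^\star\{\alpha\}_\ddbar$ and the inclusion for the non-K\"ahler locus. The essential difficulty is that the naive pullback $\sigma^\star T$ of a K\"ahler current $T\in\{\alpha\}_\ddbar$ is only semipositive along the exceptional locus $E$ (the pullback form $\sigma^\star\omega$ has rank drop there), so $\sigma^\star T$ does not dominate any Hermitian form on $\wi X$. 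We therefore need to restore positivity along $E$ while staying in the correct $\ddbar$-cohomology class.

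First, using the Boucksom theorem \cite[Thm.\ 3.17]{Bo04} quoted just before the lemma, I would fix a K\"ahler current $T=\alpha+dd^c\varphi\in\{\alpha\}_\ddbar$ with almost algebraic singularities such that $T\geq\delta\omega$ for some $\delta>0$ and $E_+(T)=E_{nK}(\{\alpha\}_\ddbar)$. In particular $T$ is smooth away from $E_{nK}(\{\alpha\}_\ddbar)$, and its pullback $\sigma^\star T=\sigma^\star\alpha+dd^c(\varphi\circ\sigma)$ is a positive closed $(1,1)$-current on $\wi X$ representing $\sigma^\star\{\alpha\}_\ddbar$, smooth on $\wi X\setminus\sigma^{-1}(E_{nK}(\{\alpha\}_\ddbar))$, and satisfying $\sigma^\star T\geq\delta\sigma^\star\omega$.

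Next I would invoke the relative ampleness of $\cO_{\wi X}(-E_i)$ over the successive blow-up centers, a standard consequence of the smooth-center blow-up construction obtained inductively from the fiberwise ampleness of $\cO_{\P^k}(1)$. This produces positive constants $\varepsilon_i$, smooth Hermitian metrics $h_i$ on $\cO(E_i)$, a Hermitian form $\wi\omega$ on $\wi X$, and $c_0>0$ with
\[\sigma^\star\omega-\sum_i\varepsilon_i\,c_1(\cO(E_i),h_i)\geq c_0\,\wi\omega\]
as smooth $(1,1)$-forms on $\wi X$. Denoting by $s_i$ the canonical section of $\cO(E_i)$ cutting out $E_i$, the Poincar\'e--Lelong formula gives $[E_i]-c_1(\cO(E_i),h_i)=\tfrac12\,dd^c\log|s_i|^2_{h_i}$, which is $dd^c$-exact. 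I would then define
\[\wi T:=\sigma^\star T+\delta\sum_i\varepsilon_i\,\big([E_i]-c_1(\cO(E_i),h_i)\big)=\sigma^\star T+\tfrac{\delta}{2}\,dd^c\Big(\sum_i\varepsilon_i\log|s_i|^2_{h_i}\Big),\]
which remains a positive closed current in the class $\sigma^\star\{\alpha\}_\ddbar$. Discarding the distributional positive terms $\delta\varepsilon_i[E_i]\geq 0$ and combining $\sigma^\star T\geq\delta\sigma^\star\omega$ with the displayed inequality yields $\wi T\geq\delta c_0\wi\omega$, so $\wi T$ is a K\"ahler current; this already gives the bigness of $\sigma^\star\{\alpha\}_\ddbar$.

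Finally, for the non-K\"ahler inclusion I would verify $\nu(\wi T,x)=0$ when $x\notin E$ and $\sigma(x)\notin E_{nK}(\{\alpha\}_\ddbar)$: the smooth forms $c_1(\cO(E_i),h_i)$ have trivial Lelong numbers, each current $[E_i]$ is supported in $E$ and hence vanishes in a neighborhood of $x$, and since $\sigma$ is a local biholomorphism at $x$ with $T$ smooth at $\sigma(x)$ we have $\nu(\sigma^\star T,x)=\nu(T,\sigma(x))=0$. Summing, $\nu(\wi T,x)=0$, so $x\notin E_+(\wi T)$, and since $E_{nK}(\sigma^\star\{\alpha\}_\ddbar)$ is the intersection of $E_+(S)$ over all K\"ahler currents $S$ in the class, we conclude $x\notin E_{nK}(\sigma^\star\{\alpha\}_\ddbar)$. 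The step I expect to be hardest is the relative positivity inequality above, i.e.\ producing a Hermitian form of the shape $\sigma^\star\omega-\sum_i\varepsilon_i c_1(\cO(E_i),h_i)$ on $\wi X$; this is a classical but nontrivial construction requiring induction on the sequence of blow-ups via a fiberwise positivity argument, and everything else is cohomological bookkeeping and standard Lelong number calculus.
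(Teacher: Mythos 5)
Your proof is correct and takes essentially the same approach as the paper: pull back a Kähler current $T\in\{\alpha\}_\ddbar$ with $E_+(T)=E_{nK}(\{\alpha\}_\ddbar)$, repair the positivity lost along $E$ by adding a $dd^c$-exact correction $\delta\sum_i\varepsilon_i\bigl([E_i]-c_1(\cO(E_i),h_i)\bigr)$ coming from the standard Hermitian-form inequality $\sigma^\star\omega-\sum_i\varepsilon_i c_1(\cO(E_i),h_i)\geq c_0\wi\omega$, and then read off the Lelong numbers. The paper packages the correction through a single Hermitian metric on $\cO_{\wi X}(E)$ and cites \cite[Lemma 2.2]{CMM17} for the positivity, and it only uses biholomorphic invariance of Lelong numbers rather than smoothness of $T$ away from $E_{nK}$ (so the almost-algebraic-singularities refinement of Boucksom's theorem, which you invoke, is not actually needed), but these are cosmetic differences.
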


\begin{proof}
It is well known that there exist $a>0$ and a Hermitian metric $h$ 
on the line bundle $\cO_{\wi X}(E)$ determined by $E$ such that 
$\wi\omega:=\sigma^\star\omega-a\eta$ is a Hermitian form on 
$\wi X$, where $\eta=c_1(\cO_{\wi X}(E),h)$ 
(see e.g.\ \cite[Lemma 2.2]{CMM17}). 
Let $s$ be the canonical section of $\cO_{\wi X}(E)$. 
Then, by the Lelong-Poincar\'e formula, $[E]=\eta+dd^c\log|s|_h$.

We fix a K\"ahler current $T\in\{\alpha\}_\ddbar$ such that 
$E_+(T)=E_{nK}\big(\{\alpha\}_\ddbar\big)$ and $T\geq\delta\omega$ 
for some $\delta>0$. Then $\wi T:=\sigma^\star T-\delta a\eta\geq\delta\wi\omega$ 
is a K\"ahler current on $\wi X$. We have 
\[S:=\sigma^\star T+\delta a\,dd^c\log|s|_h=\wi T+\delta a[E]\geq\delta\wi\omega.\]
So $S\in\sigma^\star\{\alpha\}_\ddbar$ is a K\"ahler current. 
Note that $\sigma:\wi X\setminus E\to X\setminus Z$ is a biholomorphism, 
where $Z$ is a analytic subset of $X$ of codimension $\geq2$ 
such that $E=\sigma^{-1}(Z)$. Since Lelong numbers are biholomorphically 
invariant and the function $\log|s|_h$ is smooth on $\wi X\setminus E$, we infer that
$E_+(S)\subset\sigma^{-1}(E_+(T))\cup E$.
\end{proof}

\subsection{Bonavero's Morse inequalities}\label{SS:Bona}
Bonavero's singular holomorphic Morse inequalities \cite{Bon93} 
have the following consequence which will be needed in the proof of Theorem \ref{T:big}.

\begin{Proposition}\label{P:Bonavero}
Let $(L,h)$ be a singular Hermitian holomorphic line bundle over
 a compact Hermitian manifold $(X,\omega)$ of dimension $n$, 
 such that $h$ has almost algebraic singularities in an analytic subset 
 $A\subset X$ and $c_1(L,h)\geq\varepsilon\omega$ on $X$, 
 where $\varepsilon>0$. If $Y\subset X$ is a (connected) complex 
 submanifold of dimension $m$ such that $Y\not\subset A$, 
 we have as $p\to\infty$ that 
\[\dim H^0_{(2)}(Y,L^p|_Y,h^p|_Y,\omega^m|_Y)\geq
\frac{p^m}{m!}\,\int_{Y\setminus A}c_1(L,h)^m+o(p^m)\geq
\frac{\varepsilon^mp^m}{m!}\,\int_Y\omega^m+o(p^m).\]
\end{Proposition}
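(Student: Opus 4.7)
The plan is to apply Bonavero's singular holomorphic Morse inequalities directly to the restriction $(L|_Y, h|_Y)$ on the compact Hermitian manifold $(Y, \omega|_Y)$. First, since $Y \not\subset A$ and $A$ is analytic, $A \cap Y$ is a proper analytic subset of $Y$, hence has Lebesgue measure zero. Writing $h = h_0 e^{-2\varphi}$ with $\varphi$ having almost algebraic singularities in $A$, the restriction $\varphi|_Y$ is not identically $-\infty$; pulling back via the modification $\sigma:\wi X \to X$ that resolves the singularities of $\varphi$ together with an appropriate strict transform of $Y$, one checks that $\varphi|_Y$ has almost algebraic singularities in $A \cap Y$. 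Thus $h|_Y$ is a singular Hermitian metric on $L|_Y$ with almost algebraic singularities, and its curvature satisfies $c_1(L|_Y, h|_Y) = c_1(L,h)|_Y \geq \varepsilon \omega|_Y$.

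Next, I apply Bonavero's strong Morse inequality at $q = 0$ on $Y$. Since the curvature current strictly dominates the Hermitian form $\omega|_Y$, the ``index $\leq 0$'' locus is all of $Y \setminus A$, and the inequality reads
\[\dim H^0(Y, L^p|_Y \otimes \cI(h^p|_Y)) \geq \frac{p^m}{m!} \int_{Y \setminus A} c_1(L,h)^m + o(p^m),\]
where $\cI(h^p|_Y)$ is the multiplier ideal sheaf of $h^p|_Y$. On the compact manifold $Y$, the condition that a holomorphic section $S$ of $L^p|_Y$ lies in $H^0(Y, L^p|_Y \otimes \cI(h^p|_Y))$ is equivalent to the $L^2$ condition $S \in H^0_{(2)}(Y, L^p|_Y, h^p|_Y, \omega^m|_Y)$, since $\omega^m|_Y$ is a smooth volume form. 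This yields the first claimed inequality.

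For the second inequality, observe that on the smooth locus $Y \setminus A$ both $c_1(L,h)|_Y$ and $\omega|_Y$ are smooth positive $(1,1)$-forms with $c_1(L,h)|_Y \geq \varepsilon \omega|_Y$ pointwise, so their $m$-fold wedge powers satisfy $c_1(L,h)^m \geq \varepsilon^m \omega^m$ there. Integrating and using that $A \cap Y$ has measure zero in $Y$ gives $\int_{Y \setminus A} c_1(L,h)^m \geq \varepsilon^m \int_Y \omega^m$, which combined with the previous bound finishes the proof.

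The main obstacle is the middle step: Bonavero's Morse inequalities are typically stated for metrics with genuine analytic or algebraic singularities on a smooth compact manifold, whereas $h|_Y$ only has almost algebraic singularities. The natural remedy is to pass to a resolution $\wi\sigma:\wi Y \to Y$ on which the pull-back of $h|_Y$ has honestly algebraic singularities, apply Bonavero's theorem on $\wi Y$ with a Hermitian form built from $\wi\sigma^\star \omega|_Y$ plus an exceptional-divisor correction (in the spirit of Lemma \ref{L:nKblow}), and descend the dimension inequality back to $Y$ via the identification of the multiplier-ideal section spaces on $Y$ and $\wi Y$ away from the exceptional set. Since $\wi\sigma$ is biholomorphic over $Y \setminus (A \cap Y)$, the leading integral $\int_{Y\setminus A} c_1(L,h)^m$ is preserved under this passage.
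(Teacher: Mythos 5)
Your proposal is essentially correct and follows the same strategy as the paper: resolve the singularities of $\varphi$, restrict to a smooth model of $Y$ where the metric has genuine algebraic singularities, apply Bonavero's singular Morse inequality there, and transfer the dimension estimate back via the identification of $L^2$-section spaces (with $\wi\omega\geq\sigma^\star\omega$ to compare volume forms). The remedy you sketch at the end is exactly what the paper executes.

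The one organizational difference: you first try to argue that $\varphi|_Y$ itself has \emph{almost} algebraic singularities on $Y$, and then resolve $Y$. That intermediate claim is actually the delicate point you implicitly wave away (``one checks that $\varphi|_Y$ has almost algebraic singularities''): Definition \ref{D:algsinga}(iii) requires a modification $\wi Y\to Y$ that is a composition of blow-ups with \emph{smooth} center in $Y$ with blow-up locus inside $\{\varphi|_Y=-\infty\}$, and it is not automatic that the strict transform of $Y$ under $\sigma:\wi X\to X$ has this form, since the centers of $\sigma$ need not intersect (the strict transforms of) $Y$ transversally. The paper sidesteps this entirely: it never asserts anything about $h|_Y$ on $Y$, but works directly with the strict transform $\wi Y\subset\wi X$ and the restriction $\wi h|_{\wi Y}$, which has genuine \emph{algebraic} singularities simply because the restriction of local generators of the defining ideal sheaf to a submanifold not contained in its zero set again generate a coherent ideal sheaf. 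This is cleaner and avoids needing the intermediate statement. Your use of the multiplier-ideal formulation of Bonavero and the pointwise comparison $c_1(L,h)^m\geq\varepsilon^m\omega^m$ on $Y\setminus A$ for the second inequality matches the paper's argument.
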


\begin{proof}
Note that $h$ defines a singular Hermitian metric $h|_Y$ on $L|_Y$, since $Y\not\subset A$. Let $\sigma:\wi X\to X$ be a proper modification as in Definition \ref{D:algsinga} such that the metric $\wi h:=\sigma^\star h$ on $\wi L:=\sigma^\star L$ has algebraic singularities. Fix a Hermitian form $\wi\omega$ on $\wi X$ such that $\wi\omega\geq\sigma^\star\omega$, and let $\wi Y$ be the strict transform of $Y$ under $\sigma$. Then $\wi Y$ is a complex submanifold of $\wi X$ of dimension $m$. Since $\wi Y\not\subset\sigma^{-1}(A)$ we see that $\wi h$ induces a singular metric $\wi h|_{\wi Y}$ on $\wi L|_{\wi Y}$ which has algebraic singularities. Moreover,
\[H^0_{(2)}(Y,L^p|_Y,h^p|_Y,\omega^m|_Y)\cong 
H^0_{(2)}(\wi Y,\wi L^p|_{\wi Y},\wi h^p|_{\wi Y},\sigma^\star\omega^m|_{\wi Y})\supset 
H^0_{(2)}(\wi Y,\wi L^p|_{\wi Y},\wi h^p|_{\wi Y},\wi\omega^m|_{\wi Y}).\]

If $Z\subset A$ is the blow-up locus of $\sigma$ then $\sigma:\wi X\setminus E\to X\setminus Z$ is a biholomorphism, where $E=\sigma^{-1}(Z)$ is the final exceptional divisor. Thus $\sigma:\wi Y\setminus\sigma^{-1}(A)\to Y\setminus A$ is a biholomorphism. Note that on $\wi Y\setminus\sigma^{-1}(A)$, $\wi h|_{\wi Y}$ is smooth and $c_1\big(\wi L|_{\wi Y},\wi h|_{\wi Y}\big)\geq\sigma^\star\omega>0$. Therefore, by \cite{Bon93} we have as $p\to\infty$ that  
\begin{align*}
\dim H^0_{(2)}(\wi Y,\wi L^p|_{\wi Y},\wi h^p|_{\wi Y},\wi\omega^m|_{\wi Y})&\geq\frac{p^m}{m!}\,
\int_{\wi Y\setminus\sigma^{-1}(A)}c_1\big(\wi L|_{\wi Y},\wi h|_{\wi Y}\big)^m+o(p^m) \\
&=\frac{p^m}{m!}\,\int_{Y\setminus A}c_1(L,h)^m+o(p^m).
\end{align*}
\end{proof}

\subsection{Proof of Theorem \ref{T:big}}\label{SS:Tbig}
Let us start by introducing the analytic subset $A\subset X$ from hypothesis (E). We set
\begin{equation}\label{e:A}
A=A(L,\Sigma,\tau):=\bigcap\big\{\pi\big(E_{nK}\big(\Theta_\pi\big)\big):\,(\wi X,\pi,\wi\Sigma) \text{ is a divisorization of } (X,\Sigma)\big\}.
\end{equation}
Here $\Theta_\pi\in H^{1,1}_\ddbar(\wi X,\R)$ is defined in \eqref{e:divcoh1} and it is a big class by Theorem \ref{T:CMN3}, since the triplet $(L,\Sigma,\tau)$ is big.

Condition (E) implies that we can fix a divisorization $(\wi X,\pi,\wi\Sigma)$ of $(X,\Sigma)$ such that 
\begin{equation}\label{e:spec1}
Y\not\subset X_\sing\cup\Sigma^\cup\cup\pi\big(E_{nK}\big(\Theta_\pi\big)\big).
\end{equation}
We have that $\pi:\wi X\setminus E_\pi\to X\setminus X_\pi$ is a biholomorphism (see Proposition \ref{P:divisorization}). Let $\wi Y$ be the final strict transform of $Y$, and set $\wi L:=\pi^\star L$, $\wi\Sigma^\cup:=\bigcup_{j=1}^\ell\wi\Sigma_j$.

\begin{Lemma}\label{L:embdes1}
There exists a compact complex manifold $\wih X$ of dimension $n$ and a surjective holomorphic map $\wi\pi:\wih X\to\wi X$, given as the composition of finitely many blow-ups with smooth center, such that $\wi\pi:\wih X\setminus\wih E\to\wi X\setminus\wi Z$ is a biholomorphism, where $\wi Z\subset\wi X$ is an analytic subset of dimension $\leq n-2$ and $\wih E=\wi\pi^{-1}(\wi Z)$ is the final exceptional divisor. Moreover, the strict transform $\wih Y$ of $\wi Y$ is a (connected) complex submanifold of $\wih X$ of dimension $m$, and $\wih Y,\wih E_\pi,\wih E$ have simultaneously only normal crossings, where $\wih E_\pi$ denotes the union of the strict transforms under $\wi\pi$ of the irreducible components of $E_\pi$.
\end{Lemma}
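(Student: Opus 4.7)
The plan is to invoke Hironaka's theorem on log-resolution in the form that simultaneously desingularizes an analytic subset and achieves normal crossings with a given simple normal crossings divisor: given a smooth complex manifold $W$, an irreducible reduced analytic subset $Z\subset W$, and a simple normal crossings divisor $D\subset W$, there is a proper morphism $\varrho:\wi W\to W$, obtained as a finite composition of blow-ups with smooth centers contained in loci of codimension at least two in the respective ambient manifolds, such that the strict transform of $Z$ is smooth and, together with the strict transforms of the components of $D$ and the exceptional divisor of $\varrho$, forms a simple normal crossings configuration on $\wi W$.

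First, I would record the initial data. By Proposition \ref{P:divisorization}(i), $E_\pi\subset\wi X$ is a simple normal crossings divisor, and $\wi Y$, the final strict transform of $Y$ under $\pi$, is an irreducible analytic subset of $\wi X$ of dimension $m<n$. Hypothesis (E) yields $Y\not\subset X_\pi$, and since $\pi$ is a biholomorphism outside $E_\pi$, it follows that $\wi Y\not\subset E_\pi$.

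Second, I would apply the cited theorem to the pair $(\wi Y, E_\pi)\subset\wi X$ to produce $\wi\pi:\wih X\to\wi X$, and let $\wi Z\subset\wi X$ denote the union of the images in $\wi X$ of all successive blow-up centers. A short induction on the number of blow-ups shows that each such image is analytic of dimension at most $n-2$ (the push-forward of an analytic set of dimension $\leq n-2$ under a blow-down map still has dimension $\leq n-2$), so $\wi Z$ is itself analytic of dimension at most $n-2$. The final exceptional divisor $\wih E=\wi\pi^{-1}(\wi Z)$ is a divisor in $\wih X$, and $\wi\pi$ restricts to a biholomorphism $\wih X\setminus\wih E\to\wi X\setminus\wi Z$. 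By the conclusion of the resolution theorem, the strict transform $\wih Y\subset\wih X$ of $\wi Y$ is smooth of pure dimension $m$, and $\wih Y$, $\wih E_\pi$, $\wih E$ form a simultaneous simple normal crossings configuration. Since $\wi Y$ is irreducible, the open dense set $\wi Y\setminus\wi Z$ is connected, and $\wih Y\setminus\wih E$ is biholomorphic to it, so $\wih Y$ is connected.

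The main obstacle is merely citing the correct strong form of Hironaka's theorem (the log-resolution version delivering both smoothness of the strict transform of $\wi Y$ and simultaneous normal crossings with the pre-existing divisor $E_\pi$, with centers kept in codimension at least two); once this is invoked, the dimension count showing $\dim\wi Z\leq n-2$ and the identification of the biholomorphism locus are routine.
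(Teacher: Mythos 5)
Your proposal is correct and takes essentially the same route as the paper: the paper's proof is the single sentence ``We apply Hironaka's theorem on the embedded resolution of singularities [BM97, Theorems 10.7 and 1.6] to $\wi Y\cup E_\pi\subset\wi X$,'' whereas you invoke the same log-resolution theorem and spell out the routine consequences (the dimension bound on $\wi Z$, the biholomorphism locus, and connectedness of $\wih Y$). No discrepancy.
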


\begin{proof}
We apply Hironaka's theorem on the embedded resolution of singularities \cite[Theorems 10.7 and 1.6]{BM97} to $\wi Y\cup E_\pi\subset\wi X$. 
\end{proof}

Set
\[\wih L=\wi\pi^\star\wi L\,,\,\;\wih\Sigma=(\wih\Sigma_1,\ldots,\wih\Sigma_\ell)\,,\,\;\wih\Sigma^\cup:=\bigcup_{j=1}^\ell\wih\Sigma_j,\]
where $\wih\Sigma_j$ is the strict transform of $\wi\Sigma_j$ under $\wi\pi$.
By \cite[Corollary 3.4]{CMN19} we have 
\begin{equation}\label{e:isomH00}
H^0_0(X,L^p,\Sigma,\tau)\cong H^0_0(\wi X,\wi L^p,\wi\Sigma,\tau)\cong H^0_0(\wih X,\wih L^p,\wih\Sigma,\tau), \text{ for all } p\geq1.
\end{equation}

\begin{Lemma}\label{L:iso}
$H^0_0(X|Y,L^p,\Sigma,\tau)\cong H^0_0(\wi X|\wi Y,\wi L^p,\wi\Sigma,\tau)\cong H^0_0(\wih X|\wih Y,\wih L^p,\wih\Sigma,\tau)$, for all $p\geq1$.
\end{Lemma}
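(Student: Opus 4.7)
My plan is to exhibit both isomorphisms as pullback maps, using the already-established global isomorphism \eqref{e:isomH00} from \cite[Corollary 3.4]{CMN19} as a black box. For the first isomorphism I would set
\[\Phi:H^0_0(X|Y,L^p,\Sigma,\tau)\to H^0_0(\wi X|\wi Y,\wi L^p,\wi\Sigma,\tau),\quad \Phi(S|_Y):=(\pi^\star S)|_{\wi Y},\]
and for the second one I would run the identical argument with $\wi\pi:\wih X\to\wi X$ in place of $\pi$ and the pair $(\wi Y,\wih Y)$ in place of $(Y,\wi Y)$.

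The geometric setup I would record first is that $\pi$ restricts to a biholomorphism between $\wi Y\setminus E_\pi$ and $Y\setminus X_\pi$. This follows from Proposition \ref{P:divisorization}(i) together with assumption (E): since $X_\pi\subset X_\sing\cup\Sigma^\cup$ and $Y\not\subset X_\sing\cup\Sigma^\cup\cup A$, we get $Y\not\subset X_\pi$, so $Y\setminus X_\pi$ is a non-empty open subset of the irreducible analytic set $Y$, hence dense in $Y$. Likewise $\wi Y$ is irreducible as the strict transform of the irreducible $Y$, and $\wi Y\setminus E_\pi$ is dense in $\wi Y$.

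With this in hand, the three checks on $\Phi$ are short. For well-definedness, if $S_1|_Y=S_2|_Y$ then $\pi^\star(S_1-S_2)$ vanishes on the dense open $\wi Y\setminus E_\pi$ via the biholomorphism, hence on all of $\wi Y$ by the identity principle. Surjectivity is immediate: a given element of $H^0_0(\wi X|\wi Y,\wi L^p,\wi\Sigma,\tau)$ is $\wi S|_{\wi Y}$ for some $\wi S\in H^0_0(\wi X,\wi L^p,\wi\Sigma,\tau)$, and \eqref{e:isomH00} provides $S\in H^0_0(X,L^p,\Sigma,\tau)$ with $\pi^\star S=\wi S$, whence $\Phi(S|_Y)=\wi S|_{\wi Y}$. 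Injectivity is symmetric: if $(\pi^\star S)|_{\wi Y}=0$ then, transferring via the biholomorphism, $S$ vanishes on $Y\setminus X_\pi$, hence on $Y$ by the identity principle.

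The only (and mild) subtlety I anticipate is that $Y$ and $\wi Y$ may be singular, so the identity principle needs to be invoked in the form \emph{a holomorphic section of a line bundle over a reduced irreducible complex analytic space that vanishes on a non-empty open subset vanishes identically} — standard, but worth recording. No extra argument is needed to propagate the vanishing conditions $\ord(\LargerCdot,\Sigma_j)\geq t_{j,p}$ and $\ord(\LargerCdot,\wi\Sigma_j)\geq t_{j,p}$, since their equivalence is already absorbed into \eqref{e:isomH00} via Proposition \ref{P:divisorization}(iii).
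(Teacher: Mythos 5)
Your proposal is correct and takes essentially the same approach as the paper: the paper also defines the map $F(S|_Y) = (\pi^\star S)|_{\wi Y}$, cites the global isomorphism $\pi^\star: H^0_0(X,L^p)\to H^0_0(\wi X,\wi L^p)$ from \cite[Corollary 3.4]{CMN19}, and then remarks that well-definedness and bijectivity are "easy to see" (which you have correctly supplied in detail via density of $\wi Y\setminus E_\pi$ and $Y\setminus X_\pi$ together with the identity principle for sections on reduced irreducible analytic spaces).
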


\begin{proof} The linear map $\pi^\star:H^0(X,L^p)\to H^0(\wi X,\wi L^p)$, $S\to\pi^\star S$, is bijective with inverse $\pi_\star:H^0(\wi X,\wi L^p)\to H^0(X,L^p)$ defined as follows: if $\wi S\in H^0(\wi X,\wi L^p)$, set $\pi_\star\wi S=S$, 
where $S:=(\pi^{-1})^\star(\wi S|_{\wi X\setminus E_\pi})\in 
H^0(X\setminus X_\pi,L^p|_{X\setminus X_\pi})$ extends to a section in 
$H^0(X,L^p)$ since $X$ is normal and $\dim X_\pi\leq n-2$ \cite[p.\ 143]{GR84}. By \cite[Corollary 3.4]{CMN19}, $\pi^\star:H^0_0(X,L^p)\to H^0_0(\wi X,\wi L^p)$ is an isomorphism. We define a linear map $F:H^0_0(X|Y,L^p)\to H^0_0(\wi X|\wi Y,\wi L^p)$ as follows: if $s\in H^0_0(X|Y,L^p)$ then $s=S|_Y$ for some $S\in H^0_0(X,L^p)$, and we set $F(s)=\pi^\star S|_{\wi Y}$. It is easy to see that $F$ is well defined and bijective. In a similar manner we show that $H^0_0(\wi X|\wi Y,\wi L^p)\cong H^0_0(\wih X|\wih Y,\wih L^p)$.
\end{proof}

\smallskip

\begin{proof}[Proof of Theorem \ref{T:big}]
We use the notation and set-up introduced above, so $\pi(\wi Y)=Y$, $\pi(\wi\Sigma_j)=\Sigma_j$, $\wi\pi(\wih Y)=\wi Y$, $\wi\pi(\wih\Sigma_j)=\wi\Sigma_j$. Let $\wih\omega$ be a Hermitian form on $\wih X$. Since $Y$ verifies \eqref{e:spec1} it follows that $\wi Y\not\subset E_{nK}\big(\Theta_\pi\big)\cup\wi\Sigma^\cup$, hence $\wih Y\not\subset\wi\pi^{-1}\big(E_{nK}\big(\Theta_\pi\big)\big)\cup\wih\Sigma^\cup$. By Lemma \ref{L:nKblow} we have that the class $\wi\pi^\star\Theta_\pi\in  H^{1,1}_\ddbar(\wih X,\R)$ is big and 
\begin{equation}\label{e:spec2}
\wih Y\not\subset E_{nK}\big(\wi\pi^\star\Theta_\pi\big)\cup\wih\Sigma^\cup.
\end{equation}

Let now 
\begin{equation}\label{e:divcoh2}
\wih\Theta:=c_1(\wih L)-\sum_{j=1}^\ell\tau_j\{\wih\Sigma_j\}_\ddbar\in H^{1,1}_\ddbar(\wih X,\R),
\end{equation}
where $c_1(\wih L)$ is the first Chern class of $\wih L$ and 
$\{\wih\Sigma_j\}_\ddbar\in H^{1,1}_\ddbar(\wih X,\R)$ is 
the class of the current of integration along $\wih\Sigma_j$.
Using \eqref{e:divcoh1} we infer that $\wih\Theta=\wi\pi^\star\Theta_\pi+\{R\}_\ddbar$, 
for some positive closed current $R$ of bidegree $(1,1)$ supported in $\wih E$. 
This implies that $\wih\Theta$ is a big class and 
\begin{equation}\label{e:wihTheta}
E_{nK}\big(\wih\Theta\big)\subset 
E_{nK}\big(\wi\pi^\star\Theta_\pi\big)\cup\wih E.
\end{equation} 
Indeed, if $T\in\wi\pi^\star\Theta_\pi$ is a K\"ahler current with 
$E_+(T)=E_{nK}\big(\wi\pi^\star\Theta_\pi\big)$ then 
$T+R\in\wih\Theta$ is a K\"ahler current and 
$E_+(T+R)\subset E_+(T)\cup\wih E$. By using \eqref{e:spec2} it follows that 
\begin{equation}\label{e:spec3}
\wih Y\not\subset E_{nK}\big(\wih\Theta\big)\cup\wih\Sigma^\cup.
\end{equation}

Consider the class
\begin{equation}\label{e:divcoh3}
\wih\Theta_r:=c_1(\wih L)-\sum_{j=1}^\ell r_j\{\wih\Sigma_j\}_\ddbar=\wih\Theta+\sum_{j=1}^\ell(\tau_j-r_j)\{\wih\Sigma_j\}_\ddbar,\;r_j\in\Q,\;r_j>\tau_j.
\end{equation}
By Lemma \ref{L:nKpert} we have that $\wih\Theta_r$ is big and $E_{nK}\big(\wih\Theta_r\big)\subset E_{nK}\big(\wih\Theta\big)$ if $r_j-\tau_j$ is small enough. Hence by \eqref{e:spec3},  
\begin{equation}\label{e:spec4}
\wih Y\not\subset E_{nK}\big(\wih\Theta_r\big)\cup\wih\Sigma^\cup.
\end{equation}

By Demailly's regularization theorem \cite{D92} (see also \cite[Theorem 3.2]{DP04}) 
and by \cite[Theorem 3.17]{Bo04} there exists a K\"ahler current $\wih T\in\wih\Theta_r$ 
with almost algebraic singularities such that $E_+(\wih T)=E_{nK}\big(\wih\Theta_r\big)$. 
Then $\wih T+\sum_{j=1}^\ell r_j[\wih\Sigma_j]\in c_1(\wih L)$, so there exists a 
singular metric $\wih h$ on $\wih L$ such that 
$c_1(\wih L,\wih h)=\wih T+\sum_{j=1}^\ell r_j[\wih\Sigma_j]$. Since $\wih T$ 
has almost algebraic singularities and $r_j>0$ are rational, it follows easily 
that the metric $\wih h$ has almost algebraic singularities contained in 
$E_{nK}\big(\wih\Theta_r\big)\cup\wih\Sigma^\cup$. Moreover 
we have $c_1(\wih L,\wih h)\geq\wih T\geq\varepsilon\wih\omega$, 
for some $\varepsilon>0$. Thanks to \eqref{e:spec4} we can apply 
Proposition \ref{P:Bonavero} to obtain, as $p\to\infty$,
\begin{equation}\label{e:Bon}
\dim H^0_{(2)}(\wih Y,\wih L^p|_{\wih Y},
\wih h^p|_{\wih Y},\wih\omega^m|_{\wih Y})\geq
\frac{\varepsilon^mp^m}{m!}\,\int_{\wih Y}\wih\omega^m+o(p^m).
\end{equation}

Since $X$ is a K\"ahler space it follows that $\wih X$ is a K\"ahler 
manifold (see e.g.\ \cite[Lemma 2.2]{CMM17}). Moreover, 
\eqref{e:isomH00} implies that the line bundle $\wih L$ is big, 
as $(L,\Sigma,\tau)$ is a big triplet. Hence $\wih X$ is a projective 
manifold. By Theorem \ref{T:Hisamoto-l2-ext}, if $p$ is sufficiently large, every section 
$s\in H^0_{(2)}(\wih Y,\wih L^p|_{\wih Y},\wih h^p|_{\wih Y},\wih\omega^m|_{\wih Y})$ 
extends to a section $S\in H^0_{(2)}(\wih X,\wih L^p,\wih h^p,\wih\omega^n)$. 
Since $c_1(\wih L,\wih h)\geq r_j[\wih\Sigma_j]$ the metric $\wih h$ has a global 
quasi-psh weight with Lelong number $\geq r_j$ along $\wih\Sigma_j$. Thus 
$S$ must vanish to order $\lfloor r_jp\rfloor$ on $\wih\Sigma_j$, $1\leq j\leq\ell$.
As $r_j>\tau_j$ we have $\lfloor r_jp\rfloor>\tau_jp$ for all $p$ sufficiently large,
so $S\in H^0_0(\wih X,\wih L^p,\wih\Sigma,\tau)$. It follows that 
$H^0_{(2)}(\wih Y,\wih L^p|_{\wih Y},\wih h^p|_{\wih Y},\wih\omega^m|_{\wih Y})
\subset H^0_0(\wih X|\wih Y,\wih L^p,\wih\Sigma,\tau)$, and the proof of Theorem 
\ref{T:big} is concluded by \eqref{e:Bon} and Lemma \ref{L:iso}.
\end{proof}

\begin{Remark}\label{R:Vol}
In the setting of Theorem \ref{T:big} assume in addition that
$Y\cap\Sigma^{\cup}=\emptyset$. Since the triplet
$(L,\Sigma,\tau)$ is big it follows that $L$ is big.
By \cite{H12} we have that $\Vol_{X|Y}(L)>0$ when $X,Y$ are smooth.
Theorem \ref{T:big} actually shows that $\Vol_{Y,\Sigma,\tau}(L)>0$,
i.\,e.\ the dimension of the space of sections of $L^p|_Y$ that
extend to $X$ and vanish at least to order $p\tau$ on $\Sigma$,
grows like $p^{\dim Y}$.

\end{Remark}
An important special situation is the one when $X$ is smooth and $\Sigma_j$ are 
analytic hypersurfaces. We recall the characterization of big triplets in this case, 
which follows from \cite[Theorem 1.3]{CMN19} and Lemma \ref{L:divcoh}.

\begin{Theorem}\label{T:CMN3spec}
Let $X,L,\Sigma,\tau$ verify (A)-(D) and assume that $X$ is smooth 
and $\dim\Sigma_j=n-1$, $1\leq j\leq\ell$. The following are equivalent: 

(i) $(L,\Sigma,\tau)$ is big;

(ii) The class $\Theta=\Theta_{L,\Sigma,\tau}:=c_1(L)-
\sum_{j=1}^\ell\tau_j\{\Sigma_j\}_\ddbar\in H^{1,1}_\ddbar(X,\R)$ is big.
\end{Theorem}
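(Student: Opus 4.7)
The plan is to deduce Theorem \ref{T:CMN3spec} from Theorem \ref{T:CMN3} combined with Lemma \ref{L:divcoh}, which together assert that $(L,\Sigma,\tau)$ is big if and only if for some (equivalently, every) divisorization $(\wi X,\pi,\wi\Sigma)$ of $(X,\Sigma)$, the class $\Theta_\pi = c_1(\pi^\star L) - \sum_{j=1}^\ell \tau_j\{\wi\Sigma_j\}_\ddbar \in H^{1,1}_\ddbar(\wi X,\R)$ is big on $\wi X$. Fix such a divisorization; since $X$ is smooth and each $\Sigma_j$ is a hypersurface, each $\wi\Sigma_j$ is the (final) strict transform of $\Sigma_j$, so as Cartier divisors one has $\pi^\star \Sigma_j = \wi\Sigma_j + \sum_i m_{ij} F_i$, where the $F_i$ are the irreducible components of the exceptional divisor of $\pi$ and $m_{ij}\in\Z_{\geq 0}$. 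Passing to $\ddbar$-cohomology classes yields
\[ \Theta_\pi = \pi^\star \Theta + \sum_i c_i \{F_i\}_\ddbar, \qquad c_i := \sum_{j=1}^\ell \tau_j m_{ij} \geq 0, \]
reducing the theorem to showing that $\Theta$ is big on $X$ if and only if $\Theta_\pi$ is big on $\wi X$.

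For the direction $\Theta$ big $\Rightarrow$ $\Theta_\pi$ big, the construction in the proof of Lemma \ref{L:nKblow} (applied with $\{\alpha\}_\ddbar = \Theta$) produces, from any K\"ahler current $T\in\Theta$, a K\"ahler current $S\in\pi^\star \Theta$ on $\wi X$; then $S+\sum_i c_i [F_i]$ is a K\"ahler current in $\Theta_\pi$, so $\Theta_\pi$ is big. For the converse, fix Hermitian forms $\omega$ on $X$ and $\wi\omega$ on $\wi X$. Since $\pi^\star \omega$ is a smooth semipositive $(1,1)$-form on the compact manifold $\wi X$, there is a constant $M>0$ with $\pi^\star \omega \leq M\wi\omega$. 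Given a K\"ahler current $\wi T\in\Theta_\pi$ with $\wi T \geq \varepsilon\wi\omega$, we deduce $\wi T \geq (\varepsilon/M)\pi^\star \omega$, and by monotonicity of push-forward on positive currents,
\[ \pi_\star \wi T \;\geq\; \frac{\varepsilon}{M}\,\pi_\star \pi^\star \omega \;=\; \frac{\varepsilon}{M}\,\omega, \]
where the identity $\pi_\star \pi^\star \omega = \omega$ as currents follows, by duality, from the change-of-variables formula $\int_{\wi X}\pi^\star \omega \wedge \pi^\star \varphi = \int_X \omega\wedge\varphi$ valid for $\pi$ of degree one. Combined with $\pi_\star \{F_i\}_\ddbar = 0$ (each $F_i$ is exceptional), this gives $\pi_\star \wi T \in \pi_\star \Theta_\pi = \Theta$, so $\pi_\star \wi T$ is a K\"ahler current in $\Theta$ and $\Theta$ is big.

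I expect the main obstacle to be the reverse implication, since the relation $\Theta_\pi = \pi^\star \Theta + \sum c_i\{F_i\}_\ddbar$ expresses $\Theta_\pi$ as $\pi^\star\Theta$ plus \emph{effective} exceptional classes, so one cannot simply recover bigness of $\pi^\star\Theta$ (let alone of $\Theta$) by subtracting them. The key to circumventing this is to push a K\"ahler current in $\Theta_\pi$ down to $X$ instead of trying to correct on $\wi X$: the push-forward eliminates the exceptional contributions cohomologically, while the K\"ahler lower bound survives thanks to the observation that $\pi^\star\omega$, although only semipositive, is exactly the form whose push-forward returns $\omega$.
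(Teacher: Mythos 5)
Your proof is correct, but it takes a different route from the paper. The paper derives Theorem \ref{T:CMN3spec} by citing \cite[Theorem 1.3]{CMN19} (a result not stated here, which characterizes big triplets directly on a smooth $X$ with hypersurfaces $\Sigma_j$ via a singular metric on $L$ itself, no blow-up required) together with the observation of Lemma \ref{L:divcoh} applied on $X$. You instead deduce it from the divisorization criterion Theorem \ref{T:CMN3} (= \cite[Theorem 1.6]{CMN19}) plus Lemma \ref{L:divcoh}, which reduces the problem to showing that bigness of $\Theta_\pi$ on $\wi X$ is equivalent to bigness of $\Theta$ on $X$; you then establish this equivalence by hand. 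Your argument is sound: the decomposition $\Theta_\pi = \pi^\star\Theta + \sum_i c_i\{F_i\}_\ddbar$ with $c_i\geq 0$ is correct since each $\Sigma_j$ is a Cartier divisor on the smooth $X$ and $\wi\Sigma_j$ is its strict transform; the forward implication via Lemma \ref{L:nKblow} plus adding the effective exceptional currents $\sum c_i[F_i]$ is fine; and the converse via $\pi_\star$ is the right idea, with the three needed facts all holding --- $\pi_\star$ preserves positivity and closedness, $\pi_\star\pi^\star\omega=\omega$ because $\pi$ has generic degree one, and $\pi_\star[F_i]=0$ because a positive closed $(1,1)$-current supported on the codimension-$\geq 2$ set $\pi(F_i)$ vanishes. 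The upshot: your argument is more self-contained relative to the statements actually visible in this paper, at the cost of rederiving a comparison between $\wi X$ and $X$ that the cited \cite[Theorem 1.3]{CMN19} presumably bypasses.
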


As above, $\{\Sigma_j\}_\ddbar$ is the class of the current of integration 
$[\Sigma_j]$. In this case the exceptional set $A$ from \eqref{e:A} can 
be described more precisely:

\begin{Proposition}\label{P:A}
In the setting of Theorem \ref{T:CMN3spec}, if $(L,\Sigma,\tau)$ is big then 
$A\subset E_{nK}(\Theta)\cup\Sigma^\cup$. Hence Theorem \ref{T:big} holds 
for any $Y$ that verifies the assumption 

\medskip

\noindent
{\rm (E*)} $Y$  is an irreducible proper analytic subset of $X$ 
of dimension $m$  such that $Y\not\subset E_{nK}(\Theta)\cup\Sigma^\cup$.
\end{Proposition}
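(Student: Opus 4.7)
The plan is to control $\pi\big(E_{nK}(\Theta_\pi)\big)$ for an arbitrary divisorization $(\wi X,\pi,\wi\Sigma)$ of $(X,\Sigma)$ and then intersect over all such divisorizations. The key input, specific to the hypersurface setting, is the pullback formula
\[\pi^{\star}[\Sigma_j]=[\wi\Sigma_j]+\sum_k m_{jk}[E_k],\quad m_{jk}\in\Z_{\geq 0},\]
where the $E_k$ are the irreducible components of the final exceptional divisor of $\pi$; this holds because $\wi\Sigma_j$ is the final strict transform of $\Sigma_j$ by Proposition \ref{P:divisorization}(ii), and the iterated smooth blow-up formula produces nonnegative multiplicities. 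Passing to $\ddbar$-cohomology and substituting into \eqref{e:divcoh1} yields
\[\Theta_\pi=\pi^{\star}\Theta+\sum_{j,k}\tau_j m_{jk}\{E_k\}_\ddbar,\]
so $\Theta_\pi-\pi^{\star}\Theta$ is represented by a positive closed current supported in $E_\pi$.

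Next I pick a K\"ahler current $T\in\pi^{\star}\Theta$ with $E_+(T)=E_{nK}(\pi^{\star}\Theta)$, available by Demailly's regularization and \cite[Theorem 3.17]{Bo04}. Adding the exceptional part gives $T+\sum_{j,k}\tau_j m_{jk}[E_k]\in\Theta_\pi$, which is again a K\"ahler current whose positive Lelong-number locus lies in $E_+(T)\cup E_\pi$. Hence $E_{nK}(\Theta_\pi)\subset E_{nK}(\pi^{\star}\Theta)\cup E_\pi$, and combining with Lemma \ref{L:nKblow} applied to $\pi$ gives $E_{nK}(\Theta_\pi)\subset\pi^{-1}\big(E_{nK}(\Theta)\big)\cup E_\pi$. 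Pushing down and using $\pi(E_\pi)=X_\pi\subset\Sigma^\cup$, which follows from Proposition \ref{P:divisorization}(i) together with $X_\sing=\emptyset$, I obtain
\[\pi\big(E_{nK}(\Theta_\pi)\big)\subset E_{nK}(\Theta)\cup\Sigma^\cup.\]
Intersecting over all divisorizations then yields $A\subset E_{nK}(\Theta)\cup\Sigma^\cup$.

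For the ``Hence'' clause, if $Y$ satisfies (E*), then since $X_\sing=\emptyset$ and the inclusion just proved gives $X_\sing\cup\Sigma^\cup\cup A\subset\Sigma^\cup\cup E_{nK}(\Theta)$, the hypothesis $Y\not\subset E_{nK}(\Theta)\cup\Sigma^\cup$ implies $Y\not\subset X_\sing\cup\Sigma^\cup\cup A$, so $Y$ verifies (E) and Theorem \ref{T:big} applies. The main subtlety is really verifying that $\pi^{\star}[\Sigma_j]-[\wi\Sigma_j]$ is an effective divisor supported on the exceptional locus; once this is in hand, the rest of the argument is a clean assembly of Lemmas \ref{L:nKpert} and \ref{L:nKblow}.
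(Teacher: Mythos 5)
Your proof is correct and follows essentially the same route as the paper: both establish $E_{nK}(\Theta_\pi)\subset E_{nK}(\pi^\star\Theta)\cup E_\pi$ by observing that $\Theta_\pi-\pi^\star\Theta$ is represented by a positive closed current supported in $E_\pi$, then apply Lemma~\ref{L:nKblow} and push down via $\pi(E_\pi)=X_\pi\subset\Sigma^\cup$. The only difference is that you spell out the pullback formula $\pi^\star[\Sigma_j]=[\wi\Sigma_j]+\sum_k m_{jk}[E_k]$ to produce the explicit exceptional current, whereas the paper invokes this step by reference to the analogous computation in \eqref{e:wihTheta} from the proof of Theorem~\ref{T:big}.
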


\begin{proof}
Let $(\wi X,\pi,\wi\Sigma)$ be a divisorization of $(X,\Sigma)$. We apply 
\eqref{e:wihTheta} to $\pi:\wi X\to X$, $\Theta$ and the class 
$\Theta_\pi$ from \eqref{e:divcoh1}. Using Lemma \ref{L:nKblow} we infer that 
\[E_{nK}\big(\Theta_\pi\big)\subset E_{nK}\big(\pi^\star\Theta\big)\cup 
E_\pi\subset\pi^{-1}\big(E_{nK}(\Theta)\big)\cup E_\pi.\]
By Proposition \ref{P:divisorization}, $\pi(E_\pi)=X_\pi\subset\Sigma^\cup$, hence 
$\pi\big(E_{nK}\big(\Theta_\pi\big)\big)\subset E_{nK}(\Theta)\cup\Sigma^\cup$.
\end{proof}

We conclude this section with a simple example that illustrates Theorem \ref{T:big}. 

\begin{Example}\label{E:Pn}
Let $X=\P^n$, $\tau_j>0$, and $\Sigma_j$ be irreducible analytic hypersurfaces 
in $X$ of degree $d_j$, where $1\leq j\leq\ell$. Let $L=\mathcal O(d)$, 
where $d>\sum_{j=1}^\ell\tau_jd_j$. Then $c_1(L)-\sum_{j=1}^\ell\tau_j\{\Sigma_j\}$ 
is a K\"ahler class, so the triplet $(L,\Sigma,\tau)$ is big by Theorem \ref{T:CMN3spec}. 
Moreover, $H^0_0(X,L^p)$ is given by the space of homogeneous polynomials of 
degree $dp$ in $\C[z_0,\ldots,z_n]$ which are divisible by $\prod_{j=1}^\ell P_j^{t_{j,p}}$, 
where $P_j$ is an irreducible polynomial of degree $d_j$
such that $\Sigma_j=\{P_j=0\}$. Let now $Y\subset X$ be an irreducible analytic subset of 
dimension $m$ such that $Y\not\subset\Sigma^\cup$, i.e.\ $Y$ verifies (E*). By 
Theorem \ref{T:big} we have that the space $H^0_0(X|Y,L^p)$ of restrictions of  
polynomials in $H^0_0(X,L^p)$ to $Y$ verifies $\dim H^0_0(X|Y,L^p)\geq Cp^m$ 
for all $p$ sufficiently large.
\end{Example}
 
\section{Convergence of the Fubini-Study potentials}\label{S:FSpot}

In this section we introduce a certain restricted extremal quasi-psh function 
with poles along a divisor, which will be used to define the equilibrium potential 
and current from Theorem \ref{T:FSpot}. We refer to \cite{LS99,RS05} for similar 
constructions in the case of psh Green functions with poles 
along analytic sets. 
In the absence of the poles our envelope coincides with the 
restricted equilibrium 
weight introduced by Hisamoto  \cite[Definition 3.1]{H12}. 
We then proceed 
with the proof of Theorem \ref{T:FSpot}.

\subsection{Envelopes of quasi-psh functions with 
poles along a divisor}\label{SS:env}
Let $(X,\omega)$ be a compact Hermitian manifold 
of dimension $n$, $Y\subset X$ be a complex submanifold 
of dimension $m$, $\Sigma_j\subset X$ be irreducible 
complex hypersurfaces, and 
let $\tau_j>0$, where $1\leq j\leq\ell$. Assume that 
\[Y\not\subset\Sigma^\cup:=\bigcup_{j=1}^\ell\Sigma_j.\]
We write $\Sigma=(\Sigma_1,\ldots,\Sigma_\ell)$, 
$\tau=(\tau_1,\ldots,\tau_\ell)$, 
and let $\dist$ be the distance on $X$ induced by $\omega$.

Let $\alpha$ be a smooth real closed $(1,1)$-form on $X$. 
We fix a smooth Hermitian metric $g_j$ on $\cO_X(\Sigma_j)$, 
let $s_{\Sigma_j}$ be the canonical section of 
$\cO_X(\Sigma_j)$, $1\leq j\leq\ell$, and set 
\begin{equation}\label{e:bts}
\beta_j=c_1(\cO_X(\Sigma_j),g_j)\,,\,\;\theta=
\alpha-\sum_{j=1}^\ell\tau_j\beta_j\,,\,\;\sigma_j:=
|s_{\Sigma_j}|_{g_j}.
\end{equation}
As in \cite[(4.2)]{CMN19} we consider the class
\begin{equation}\label{e:calL}
\mathcal{L}(X, \alpha,\Sigma,\tau)=\{\psi\in\PSH(X,\alpha):\,
\nu(\psi,x)\geq\tau_j,\,\forall\,x\in\Sigma_j,\,1\leq j\leq\ell\}\,
\end{equation}
of $\alpha$-psh functions with logarithmic poles of order $\tau_j$ 
along $\Sigma_j$. Given a function $\varphi:Y\to\R\cup\{-\infty\}$ 
we introduce the following subclasses of quasi-psh 
functions and their upper envelopes: 
\begin{align}
\mathcal{A}(X|Y,\alpha,\Sigma,\tau,\varphi)&=
\{ \psi\in\mathcal{L}(X,\alpha,\Sigma,\tau):\,
\psi\leq\varphi\text{ on }Y\}, \label{e:calA} \\
\mathcal{A}'(X|Y,\alpha,\Sigma,\tau,\varphi)&=
\Big\{ \psi'\in\PSH(X,\theta):\,\psi'\leq\varphi-
\sum_{j=1}^\ell\tau_j\log\sigma_j\text{ on }
Y\setminus\Sigma^\cup\Big\},  \label{e:calA'} \\
\varphi^Y_\eq(x)=\varphi^Y_{\eq,\Sigma,\tau}(x)&=
\sup\{\psi(x):\,\psi\in\mathcal{A}(X|Y,\alpha,\Sigma,\tau,\varphi)\},
\;x\in Y,\label{e:enveq} \\
\varphi^Y_\req(x)=\varphi^Y_{\req,\Sigma,\tau}(x)&=
\sup\{\psi'(x):\,\psi'\in\mathcal{A}'(X|Y,\alpha,\Sigma,\tau,\varphi)\},
\;x\in Y.  \label{e:envreq}
\end{align}

We call $\varphi^Y_\eq$ the {\em equilibrium envelope of 
$(\alpha,Y,\Sigma,\tau,\varphi)$}, and $\varphi^Y_\req$ 
the {\em reduced equilibrium envelope of 
$(\alpha,Y,\Sigma,\tau,\varphi)$}. 
Note that when $Y=X$ these coincide with the equilibrium envelopes 
$\varphi_\eq,\varphi_\req$ defined in \cite[Section 4]{CMN19}. 
However, when $Y\neq X$ it is possible that 
$\varphi_\eq=\varphi_\req=+\infty$ on $X\setminus Y$. 
The following result is concerned with some basic properties 
of these envelopes. Its proof is very similar to that of 
\cite[Proposition 4.1]{CMN19}, so we omit it.

\begin{Proposition}\label{P:envelopes}
Let $X,Y,\Sigma,\tau,\alpha,\theta$ be as above, and let 
$\varphi:\ Y\to\R\cup\{-\infty\}$ be an upper semicontinuous function. 
Then the following hold: 

(i) The mapping 
$\PSH(X,\theta)\ni\psi'\mapsto\psi:=\psi'+
\sum_{j=1}^\ell\tau_j\log\sigma_j\in\mathcal{L}(X,\alpha,\Sigma,\tau)$ 
is well defined and bijective, with inverse 
$\mathcal{L}(X,\alpha,\Sigma,\tau)\ni\psi\mapsto\psi':=
\psi-\sum_{j=1}^\ell\tau_j\log\sigma_j\in\PSH(X,\theta)$.

(ii) There exists a constant $C>0$ depending only on 
$X,Y,\Sigma,\tau,\alpha,\theta$ such that $\sup_Y\psi'\leq\sup_Y\varphi+C$, 
for every $\psi'\in\mathcal{A}'(X|Y,\alpha,\Sigma,\tau,\varphi)$. 

(iii) $\mathcal{A}(X|Y,\alpha,\Sigma,\tau,\varphi)\neq\emptyset$ 
if and only if $\mathcal{A}'(X|Y,\alpha,\Sigma,\tau,\varphi)\neq\emptyset$. 
Moreover, in this case we have that $(\varphi^Y_\eq)^\star\in PSH(Y,\alpha|_Y)$ and 
$(\varphi^Y_\req)^\star\in PSH(Y,\theta|_Y)$, where the upper semicontinuous regularization is taken along $Y$, and
\begin{equation}\label{e:eqreq}
\varphi^Y_\eq=\varphi^Y_\req+\sum_{j=1}^\ell\tau_j\log\sigma_j\,\text{ on $Y$.}
\end{equation}

(iv) If $\varphi$ is bounded and there exists a bounded $\theta$-psh function on $X$, then $\varphi^Y_\req$ is bounded. 

(v) If $\PSH(X,\theta)\neq\emptyset$ and $\varphi_1,\varphi_2:\ Y\to\R$ 
are bounded and upper semicontinuous, then 
\[\varphi^Y_{1,\req}-\sup_Y|\varphi_1-\varphi_2|\leq\varphi^Y_{2,\req}\leq
\varphi^Y_{1,\req}+\sup_Y|\varphi_1-\varphi_2|\]
holds on $Y$. Moreover, if $\varphi_1\leq\varphi_2$ then 
$\varphi^Y_{1,\req}\leq\varphi^Y_{2,\req}$.
\end{Proposition}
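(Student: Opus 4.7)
The plan is to mirror the arguments of \cite[Proposition 4.1]{CMN19}, now adapted to the restricted setting where the constraint function $\varphi$ lives only on $Y$ rather than on all of $X$.

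For part (i), apply the Lelong-Poincar\'e formula $\ddc\log\sigma_j=[\Sigma_j]-\beta_j$. Given $\psi'\in\PSH(X,\theta)$, the function $\psi:=\psi'+\sum_{j=1}^\ell\tau_j\log\sigma_j$ satisfies
\[
\alpha+\ddc\psi=(\theta+\ddc\psi')+\sum_{j=1}^\ell\tau_j[\Sigma_j]\geq 0,
\]
and by Siu's decomposition one has $\nu(\psi,x)\geq\tau_j$ for $x\in\Sigma_j$, hence $\psi\in\mathcal{L}(X,\alpha,\Sigma,\tau)$. Conversely, if $\psi\in\mathcal{L}(X,\alpha,\Sigma,\tau)$ then the same identity shows $\psi-\sum_{j=1}^\ell\tau_j\log\sigma_j$ is $\theta$-psh on $X\setminus\Sigma^\cup$, and the Lelong-number lower bound guarantees local upper boundedness near $\Sigma^\cup$, so this function extends across $\Sigma^\cup$ to yield $\psi'\in\PSH(X,\theta)$.

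Part (iii) follows almost immediately from (i): the bijection carries $\mathcal{A}(X|Y,\alpha,\Sigma,\tau,\varphi)$ onto $\mathcal{A}'(X|Y,\alpha,\Sigma,\tau,\varphi)$ (at points of $Y\cap\Sigma^\cup$ the constraint $\psi\leq\varphi$ is automatic since $\psi=-\infty$ there), and taking pointwise suprema gives both the equivalence of non-emptiness and the identity \eqref{e:eqreq}; that the usc regularizations lie in the appropriate psh classes is a standard application of Choquet's lemma to the envelope. Part (ii) is the technical heart: after normalizing the metrics so that $\sigma_j\leq 1$, combine (i) with local Lelong-number bounds in coordinate charts near each $y_0\in Y\cap\Sigma_j$ in which $\Sigma_j=\{z_1=0\}$. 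One obtains an estimate of the shape $\psi(x)\leq\tau_j\log|z_1(x)|+M$ with $M$ controlled by geometric data and the behavior of $\psi$ on $Y$; since $\sigma_j\asymp|z_1|$ near $y_0$, subtracting $\sum\tau_j\log\sigma_j$ absorbs the logarithmic singularity and produces the uniform pointwise bound $\psi'(y_0)\leq\sup_Y\varphi+C$.

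For part (iv), boundedness of $\varphi$ together with the existence of a bounded $\theta$-psh function $\psi'_0$ yields $\psi'_0-K\in\mathcal{A}'(X|Y,\alpha,\Sigma,\tau,\varphi)$ for $K$ large, so $\varphi^Y_\req\geq\psi'_0-K$ is bounded below, while (ii) supplies the upper bound. For part (v), observe that if $\psi'\in\mathcal{A}'(X|Y,\alpha,\Sigma,\tau,\varphi_1)$ and $C=\sup_Y|\varphi_1-\varphi_2|$, then $\psi'-C\in\mathcal{A}'(X|Y,\alpha,\Sigma,\tau,\varphi_2)$; taking pointwise suprema yields $\varphi^Y_{1,\req}-C\leq\varphi^Y_{2,\req}$, the reverse inequality is symmetric, and monotonicity under $\varphi_1\leq\varphi_2$ is immediate from the inclusion $\mathcal{A}'(\varphi_1)\subset\mathcal{A}'(\varphi_2)$.

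The main obstacle is the uniform upper bound in part (ii): the direct inequality $\psi'\leq\varphi-\sum\tau_j\log\sigma_j$ holds only on $Y\setminus\Sigma^\cup$ and its right-hand side blows up along $\Sigma^\cup$, so controlling $\sup_Y\psi'$ at points of $Y\cap\Sigma^\cup$ cannot be read off directly; it requires transferring the problem to the $\alpha$-psh function $\psi$ of (i) and exploiting the Lelong-number constraints along each $\Sigma_j$ to absorb the singularities of $\log\sigma_j$ via Siu-type local estimates.
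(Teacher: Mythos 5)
Your treatment of parts (i), (iii), (iv), (v) matches the paper's approach (which in turn follows \cite[Proposition 4.1]{CMN19}). For part (ii), however, your proposed route has a genuine gap.

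You propose to prove the uniform bound $\sup_Y\psi'\leq\sup_Y\varphi+C$ by applying a Siu-type local estimate $\psi(x)\leq\tau_j\log|z_1(x)|+M$ near $y_0\in Y\cap\Sigma_j$ (where $\Sigma_j=\{z_1=0\}$), asserting that ``$M$ is controlled by geometric data and the behavior of $\psi$ on $Y$,'' and then cancelling $\tau_j\log|z_1|$ against $\tau_j\log\sigma_j$. This cannot be made to work as stated. If you run the Siu estimate for $\psi$ \emph{on} $X$, then the constant $M$ is governed by $\sup\psi$ over a full $n$-dimensional neighborhood of $y_0$ in $X$; the constraint $\psi\leq\varphi$ holds only on $Y$ and gives no control on such a neighborhood, since $\mathcal{L}(X,\alpha,\Sigma,\tau)$ has no uniform upper bound off $Y$. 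If instead you run the estimate for $\psi|_Y$ \emph{on} $Y$ (where $\sup_Y\psi\leq\sup_Y\varphi$ does bound $M$), the bound you get is in terms of $\mathrm{dist}_Y(\cdot,Y\cap\Sigma_j)$ with coefficient $\nu(\psi|_Y,\cdot)$, and neither this distance nor this Lelong number coincides with $|z_1|$ resp.\ $\tau_j$ when $Y$ meets $\Sigma_j$ non-transversally: $\sigma_j|_Y$ can vanish to arbitrarily high order along $Y\cap\Sigma_j$, so your claim $\sigma_j\asymp|z_1|$ does not produce a cancellation. In short, the quantity that must be absorbed is $\log\sigma_j|_Y$, with its full intersection multiplicity, not $\log(\text{dist to }\Sigma_j\text{ in }X)$.

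The paper avoids all of this by never passing through $\psi$ at all. It works directly with $\psi'|_Y$, which is $\theta|_Y$-psh on $Y$: cover $Y$ by finitely many coordinate balls $\B(y_k,r_k)$ with $\ov\B(y_k,2r_k)$ in a chart where $\theta|_Y=dd^c\rho_k$ for some smooth $\rho_k$, and apply the sub-mean value inequality to the psh function $\rho_k+\psi'|_Y$. This reduces the pointwise bound for $\psi'(y)$ to an average of $\psi'$ over a fixed ball, and inside that average one may substitute the constraint $\psi'\leq\varphi-\sum\tau_j\log\sigma_j$ (valid a.e.\ on $Y$). The singular term contributes only $\int_{\B(y,r)}|\sum\tau_j\log\sigma_j|\,d\lambda$, which is uniformly bounded because $\log\sigma_j|_Y\in L^1_{\loc}$ (as $Y\not\subset\Sigma_j$). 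This averaging trick sidesteps both the need to control $\psi$ on a neighborhood in $X$ and the delicate multiplicity bookkeeping that a pointwise Siu estimate would require, and it is the step you should adopt in place of your Lelong-number argument.
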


It is worth noting that we may obtain a regularity of $\varphi^Y_\eq$ in terms of $\varphi$  using  the technique developed in \cite{DMN17} and \cite{CMN19}. 

\comment{
\begin{proof} $(i)$  It is  already proved in Proposition  4.1 (i) in \cite{CMN19}.

$(ii)$ There exist points $y_k\in Y,$ coordinate neighborhoods $U_k$ 
centered at $y_k$ in $Y,$ and numbers $r_k>0$, $1\leq k\leq N$, 
such that the balls $\overline\B(y_k,2r_k)\subset U_k$ and 
$Y=\bigcup_{k=1}^N\B(y_k,r_k)$. Set $r=\min_{1\leq k\leq N}r_k$. 
Let $\rho_k$ be a smooth function defined in a neighborhood of 
$\overline\B(y_k,2r_k)$  such that $dd^c\rho_k=\theta|_Y$. 
If $\psi'\in \PSH(X,\theta)$ and $y\in\B(y_k,r_k)$ we have by 
the subaverage inequality for psh functions that 
\[\rho_k(y)+\psi'(y)\leq\frac{m!}{\pi^mr^{2m}}
\int_{\B(x,r)}(\rho_k+\psi')\,d\lambda\,,\]
where $\lambda$ is the Lebesgue measure in coordinates. 
Hence there exists a constant $C'>0$ such that for every function 
$\psi'\in\PSH(X,\theta)$ one has 
\[\psi'(y)\leq\frac{m!}{\pi^mr^{2m}}\int_{\B(y,r)}\psi'\,d\lambda+
C'\,,\,\;\forall\,x\in\B(x_k,r)\,,\,\;k=1,\ldots,N.\]
If $\psi'\in\mathcal{A}'(X|Y,\alpha,\Sigma,\tau,\varphi)$ and $y\in\B(y_k,r)$, we have 
\begin{align*}
\psi'(y)&\leq\frac{m!}{\pi^mr^{2m}}\int_{\B(y,r)}
\Big(\varphi-\sum_{j=1}^\ell\tau_j\log\sigma_j\Big)\,d\lambda+C'\\
&\leq\sup_{\B(y,r)}\varphi+\frac{m!}{\pi^mr^{2m}}
\int_{\B(y,r)}\Big|\sum_{j=1}^\ell\tau_j\log\sigma_j\Big|\,d\lambda+
C' \leq\sup_{\B(y,r)}\varphi+C\,,
\end{align*}
for some constant $C>0$ depending only on $X,Y,\Sigma,\tau,\alpha,\theta$. 
Hence $\sup_Y\psi'\leq\sup_Y\varphi+C$.

$(iii)$ It follows immediately from $(i)$ that the mapping 
\[\mathcal{A}'(X|Y,\alpha,\Sigma,\tau,\varphi)\ni\psi'\longmapsto\psi:=
\psi'+\sum_{j=1}^\ell\tau_j\log\sigma_j\in\mathcal{A}(X|Y,\alpha,\Sigma,\tau,\varphi),\]
is well defined and bijective. By $(ii)$, the family 
$\big\{\iota^\star \psi':\ \psi'\in
\mathcal{A}'(X,\alpha,\Sigma,\tau,\varphi)\big\}$ of $\theta|_Y$-psh 
functions on $Y$ is uniformly upper bounded, 
hence the upper semicontinuous regularization 
$(\varphi^Y_\req)^\star$ of $\varphi^Y_\req$ is $\theta$-psh. 
Since $\varphi^Y_\req\leq\varphi-\sum_{j=1}^\ell\tau_j\log\sigma_j$ 
on $Y\setminus\bigcup_{j=1}^\ell\Sigma_j$ and the latter 
is upper semicontinuous there, we see that 
$\varphi_\req^\star\in\mathcal{A}'(X|Y,\alpha,\Sigma,\tau,\varphi)$, 
so $\varphi^Y_\req=(\varphi^Y_\req)^\star$. Moreover, if 
$\psi\in\mathcal{A}(X|Y,\alpha,\Sigma,\tau,\varphi)$ then 
$\iota^\star \psi\leq\varphi^Y_\req+\sum_{j=1}^\ell\tau_j\log\sigma_j$ on $Y.$ 
It follows that the family 
$\big\{\iota^\star\psi:\ \psi\in \mathcal{A}(X|Y,\alpha,\Sigma,\tau,\varphi)\big\}$ 
is uniformly upper bounded, the upper semicontinuous regularization 
$(\varphi^Y_\eq)^\star$ of $\varphi^Y_\eq$ is $\alpha|_Y$-psh and it verifies 
$(\varphi^Y_\eq)^\star\leq\varphi^Y_\req+\sum_{j=1}^\ell\tau_j\log\sigma_j$ on $Y,$
and $(\varphi^Y_\eq)^\star\leq\varphi$ on $Y$, since the functions 
on the right hand side are upper semicontinuous. 
Hence $(\varphi^Y_\eq)^\star=\iota^\star\psi$ for some 
$\psi\in \mathcal{A}(X,\alpha,\Sigma,\tau,\varphi)$, 
so $\varphi^Y_\eq=(\varphi^Y_\eq)^\star$ on $Y$  and 
\eqref{e:eqreq} is clearly satisfied.

$(iv)$ Since $m:=
\inf_Y\big(\varphi-\sum_{j=1}^\ell\tau_j\log\sigma_j\big)>-\infty$, 
there exists a bounded $\theta|_Y$-psh function $\psi'$ such that 
$\psi'\leq m$ on $Y$. 
Thus $\psi'\leq\varphi^Y_\req\leq\sup_Y\varphi+C$ on $Y$. 

$(v)$ Since $\PSH(Y,\theta|_Y)\neq\emptyset$ and $\varphi_j$ 
is bounded, it follows that 
$\mathcal{A}'(X|Y,\alpha,\Sigma,\tau,\varphi_j)\neq\emptyset$, $j=1,2$. 
Then $(v)$ follows easily from the definition \eqref{e:envreq} of $\varphi^Y_\req$.
\end{proof}
}

\subsection{Proof of Theorem \ref{T:FSpot}}\label{SS:FSpot}
 
Let $X,Y, L,\Sigma,\tau$ verify assumptions (A)-(E), 
and assume in addition that there exists a K\"ahler form 
$\omega$ on $X$ and that $h$ is a continuous Hermitian metric 
on $L$. Let $h_0,\varphi$ be as in \eqref{e:varphi}. 
Let $P^Y_p,\gamma^Y_p$ be the Bergman kernel function 
and Fubini-Study current of the space $H^0_{0,(2)}(X|Y,L^p)$, 
and let $\varphi^Y_p$ be the global Fubini-Study potential 
of $\gamma^Y_p$ (see \eqref{e:FSpot}). 

We use the set-up and notation introduced in Section \ref{SS:Tbig}. 
Namely, $(\wi X,\pi,\wi\Sigma)$ is a divisorization of $(X,\Sigma)$ 
such that \eqref{e:spec1} holds, and $\wi\pi:\wih X\to\wi X$ 
is a resolution of singularities as in Lemma \ref{L:embdes1}. 

\begin{Lemma}\label{L:embdes2}
Let $\wih\pi:=\pi\circ\wi\pi:\wih X\to X$ and 
$Z:=X_\pi\cup\pi(\wi Z)$. Then $Z\subset X$ 
is an analytic subset of dimension $\leq n-2$, 
$\wih\pi^{-1}(Z)=\wih E_\pi\cup\wih E$, 
$\wih\pi:\wih X\setminus(\wih E_\pi\cup\wih E)\to 
X\setminus Z$ is a biholomorphism, and 
$\wih\pi^\star\omega>0$ on $\wih X\setminus(\wih E_\pi\cup\wih E)$.
\end{Lemma}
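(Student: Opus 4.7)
The plan is to establish the four assertions in order, tracking how the two modifications $\pi:\wi X\to X$ and $\wi\pi:\wih X\to\wi X$ interact on their exceptional loci.

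\emph{Analyticity and dimension of $Z$.} By Proposition \ref{P:divisorization}, $X_\pi\subset X$ is analytic with $\dim X_\pi\leq n-2$, and by Lemma \ref{L:embdes1} the same bound holds for $\wi Z\subset\wi X$. Since $\pi$ is proper (a composition of blow-ups), Remmert's proper mapping theorem yields that $\pi(\wi Z)$ is analytic in $X$ of dimension at most $\dim\wi Z\leq n-2$. Hence $Z=X_\pi\cup\pi(\wi Z)$ is analytic with $\dim Z\leq n-2$.

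\emph{Preimage identification $\wih\pi^{-1}(Z)=\wih E_\pi\cup\wih E$.} First, $\wih\pi^{-1}(X_\pi)=\wi\pi^{-1}(\pi^{-1}(X_\pi))=\wi\pi^{-1}(E_\pi)$; since the blow-up centers of $\wi\pi$ lie in $\wi Z$, for each irreducible component $D$ of $E_\pi$ the total transform $\wi\pi^{-1}(D)$ decomposes as the strict transform $\wih D\subset\wih E_\pi$ together with an exceptional part contained in $\wih E$, so $\wih\pi^{-1}(X_\pi)\subset\wih E_\pi\cup\wih E$. Second, I would note $\pi^{-1}(\pi(\wi Z))\subset\wi Z\cup E_\pi$: any preimage either sits in the biholomorphic locus $\wi X\setminus E_\pi$ (where it must coincide with a point of $\wi Z$) or lies in $E_\pi$. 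Applying $\wi\pi^{-1}$ gives $\wih\pi^{-1}(\pi(\wi Z))\subset\wih E\cup\wi\pi^{-1}(E_\pi)\subset\wih E_\pi\cup\wih E$. The reverse inclusion is immediate: $\wih\pi(\wih E)=\pi(\wi Z)\subset Z$, and for each component $D$ of $E_\pi$ the strict transform of $D$ maps under $\wi\pi$ into $D$, hence under $\pi$ into $X_\pi\subset Z$, so $\wih E_\pi\subset\wih\pi^{-1}(Z)$.

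\emph{Biholomorphism and positivity.} Set $V:=\wih X\setminus(\wih E_\pi\cup\wih E)$. I would first show that $\wi\pi$ restricts to a biholomorphism $V\to\wi X\setminus(E_\pi\cup\wi Z)$. For $x\in V$, $x\notin\wih E$ forces $\wi\pi(x)\notin\wi Z$; if moreover $\wi\pi(x)$ lay in a component $D$ of $E_\pi$, then the unique $\wi\pi$-preimage $x$ of $\wi\pi(x)\notin\wi Z$ would sit in the strict transform $\wih D\subset\wih E_\pi$, contradicting $x\in V$. Conversely, any $y\in\wi X\setminus(E_\pi\cup\wi Z)$ has a unique $\wi\pi$-preimage $x$ which by the same reasoning lies outside both $\wih E$ and $\wih E_\pi$. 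Composing with the biholomorphism $\pi:\wi X\setminus E_\pi\to X\setminus X_\pi$ from Proposition \ref{P:divisorization}, I obtain that $\wih\pi$ maps $V$ biholomorphically onto $X\setminus X_\pi\setminus\pi(\wi Z\setminus E_\pi)$, and this set equals $X\setminus Z$ because $\pi(\wi Z\cap E_\pi)\subset X_\pi$. Finally, $X_\sing\subset X_\pi\subset Z$ by Proposition \ref{P:divisorization}, so $X\setminus Z\subset X_\reg$ where the K\"ahler form $\omega$ is a smooth strictly positive $(1,1)$-form; positivity of $\wih\pi^\star\omega$ on $V$ follows from the biholomorphism.

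The main obstacle is the bookkeeping in the preimage identification: one must carefully separate the total transform $\wi\pi^{-1}(E_\pi)$ into strict and exceptional parts, and verify that a point of $E_\pi\setminus\wi Z$ has its unique $\wi\pi$-preimage inside $\wih E_\pi$ (not merely inside $\wi\pi^{-1}(E_\pi)$). No additional analytic input beyond Remmert's theorem and the descriptions of the divisorization and embedded resolution is required.
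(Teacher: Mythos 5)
Your proof is correct and follows essentially the same route as the paper: Remmert's proper mapping theorem for the dimension bound, the key preimage identity (which the paper compresses to $\pi^{-1}(Z)=E_\pi\cup\wi Z$, hence $\wih\pi^{-1}(Z)=\wi\pi^{-1}(E_\pi\cup\wi Z)=\wih E_\pi\cup\wih E$), and the biholomorphism by composing the two biholomorphisms off the exceptional sets. The paper's proof is three lines long and states only the identity, leaving the rest as "the lemma follows"; you have simply filled in the strict-versus-total-transform bookkeeping and the image-matching computations that the authors omit.
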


\begin{proof}
Note that $\pi(\wi Z)\subset X$ is an analytic subset of dimension $\leq n-2$, 
by Remmert's proper mapping theorem. We have that $\pi^{-1}(Z)=E_\pi\cup\wi Z$, 
so $\wih\pi^{-1}(Z)=\wi\pi^{-1}(E_\pi\cup\wi Z)=\wih E_\pi\cup\wih E$, 
and the lemma follows.
\end{proof}

Let $\wih\omega$ be a K\"ahler form on $\wih X$ such that 
$\wih\omega\geq\wih\pi^\star\omega$ (see e.{\ke}g.\ \cite[Lemma 2.2]{CMM17}) 
and denote by $\dist$ the distance on $\wih X$ induced by $\wih\omega$. Set 
\begin{equation}\label{e:wivarphi}
\wih L:=\wih\pi^\star L\,,\,\;\wih h_0:=\wih\pi^\star h_0\,,\,\;\wih\alpha:=
\wih\pi^\star\alpha=c_1(\wih L,\wih h_0)\,,\,\;\wih\varphi:=
\varphi\circ\wih\pi\,,\,\;\wih h:=\wih\pi^\star h=\wih h_0e^{-2\wih\varphi}\,.
\end{equation}
We write $\wih h^p=\wih h^{\otimes p}$ and $\wih h_0^p=
\wih h_0^{\otimes p}$. Lemma \ref{L:iso} 
implies that the map 
\begin{equation}\label{e:iso}
S\in H^0_{0,(2)}(X|Y,L^p)\to\wih\pi^\star S\in H^0_{0,(2)}(\wih X|\wih Y,\wih L^p)=
H^0_{0,(2)}(\wih X|\wih Y,\wih L^p,\wih\Sigma,\tau,\wih h^p,\wih\pi^\star\omega^m)
\end{equation}
is an isometry. It follows that 
\begin{equation}\label{e:wiFS}
\wih P^{\wih Y}_p=P^{ Y}_p\circ\wih\pi\,,\,\;\wih\gamma^{\wih Y}_p=\wih\pi^\star\gamma^{ Y}_p
\end{equation}
are the Bergman kernel function, resp.\ Fubini-Study current, 
of the space $H^0_{0,(2)}(\wih X|\wih Y,\wih L^p)$. Note that $\wih\pi(\wih Y)=Y$ and 
\begin{equation}\label{e:wiFSpot}
\frac{1}{p}\,\wih\gamma^{\wih Y}_p=\wih\alpha|_{\wih Y}+dd^c\wih\varphi^{\wih Y}_p,
\text{ where }\,\wih\varphi_p^{\wih Y}=\wih\varphi|_{\wih Y}+\frac{1}{2p}\,\log\wih P^{\wih Y}_p=
\varphi_p^Y\circ\wih\pi.
\end{equation}

Let $\wih\varphi^{\wih Y}_\eq$ be the equilibrium envelope of 
$(\wih\alpha,\wih Y,\wih\Sigma,\tau,\wih\varphi)$ defined in \eqref{e:enveq},
\begin{equation}\label{e:wienveq}
\wih\varphi^{\wih Y}_\eq(x)=
\sup\{\psi(x):\,\psi\in\mathcal{L}(\wih X,\wih\alpha,\wih\Sigma,\tau),\;
\psi\leq\wih\varphi \,\text{ on } \wih Y\},\;x\in\wih Y,
\end{equation}
where $\mathcal{L}(\wih X,\wih\alpha,\wih\Sigma,\tau)$ is defined 
in \eqref{e:calL}. Let $s_{\wih\Sigma_j}$ be the canonical section of 
$\cO_{\wih X}(\wih\Sigma_j)$ and fix a smooth Hermitian metric $g_j$ on 
$\cO_{\wih X}(\wih\Sigma_j)$ such that 
\begin{equation}\label{e:wisig}
\sigma_j:=|s_{\wih\Sigma_j}|_{g_j}<1 \text{ on } \wih X,\;1\leq j\leq\ell.
\end{equation}
Set 
\begin{equation}\label{e:wibts}
\beta_j=c_1(\cO_{\wih X}(\wih\Sigma_j),g_j)\,,\,\;\wih\theta=
\wih\alpha-\sum_{j=1}^\ell\tau_j\beta_j\,.
\end{equation}
Note that $[\wih\Sigma_j]=\beta_j+dd^c\log\sigma_j$, 
by the Lelong-Poincar\'e formula. Moreover 
$\{\wih\theta\}=\wih\Theta$, where $\wih\Theta$ 
is the big class defined in \eqref{e:divcoh2}. In this setting, 
we first prove the convergence of the global Fubini-Study potentials on $\wih Y$.


\begin{Theorem}\label{T:wiFSpot}
Let $X,Y,L,\Sigma,\tau$ verify assumptions (A)-(E), 
and assume that $(L,\Sigma,\tau)$ is big and there exists a K\"ahler form 
$\omega$ on $X$. Let $h$ be a continuous Hermitian metric on $L$, 
let $\varphi$, $\wih\varphi$, $\wih\varphi^{\wih Y}_p,\wih\varphi^{\wih Y}_\eq,\wih\theta$ 
be defined in \eqref{e:varphi}, \eqref{e:wivarphi}, \eqref{e:wiFSpot}, 
\eqref{e:wienveq}, resp.\ \eqref{e:wibts}, and set 
$\wih Z:=E_{nK}\big(\{\wih\theta\}\big)\cup\wih\Sigma^\cup$. 
Then the following hold:
\\[2pt]
(i) $\wih\varphi^{\wih Y}_p\to\big(\wih\varphi^{\wih Y}_\eq\big)^\star$ 
in $L^1(\wih Y,\wih\omega^m|_{\wih Y})$ 
and locally uniformly on $\wih Y\setminus\wih Z$ as $p\to\infty$.
\\[2pt]
(ii) If $\varphi$ is H\"older continuous on $Y$ 
then there exist a constant $C>0$ and $p_0\in\N$ such that 
for all $y\in\wih Y\setminus\wih Z$ and $p\geq p_0$ we have 
\[\big|\wih\varphi^{\wih Y}_p(y)-\big(\wih\varphi^{\wih Y}_\eq\big)^\star(y)\big|\leq
\frac{C}{p}\,\big(\log p+\big|\log\dist(y,\wih Z)\big|\big).\]
\end{Theorem}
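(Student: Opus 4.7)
The plan is to sandwich $\wih\varphi^{\wih Y}_p$ between matching pointwise upper and lower bounds on $\wih Y\setminus\wih Z$ with the correct dependence on $p$ and on $\dist(\cdot,\wih Z)$. The starting point is the rewriting of \eqref{e:var-char}, after substituting $|S|_{\wih h^p}^2 = |S|_{\wih h_0^p}^2 e^{-2p\wih\varphi}$, as
\[\wih\varphi^{\wih Y}_p(y)\;=\;\sup\bigl\{\psi_S(y)\,:\,S\in H^0_0(\wih X|\wih Y,\wih L^p),\ \|S\|_p^{\wih Y}\leq 1\bigr\},\qquad \psi_S := \tfrac{1}{p}\log|S|_{\wih h_0^p},\]
where each $\psi_S$ lies in $\mathcal{L}(\wih X,\wih\alpha,\wih\Sigma,\tau)$ because the vanishing condition $\ord(S,\wih\Sigma_j)\geq t_{j,p}$ translates into a Lelong number lower bound $\geq\tau_j$ along each $\wih\Sigma_j$.

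For the \emph{upper bound} I would fix $y\in\wih Y$ and apply the submean value inequality for the plurisubharmonic function $|s|^2$ (where $s$ is a local holomorphic representative of $S$ in some frame of $\wih L^p$) on an $m$-dimensional coordinate ball of radius $r$ in $\wih Y$ centered at $y$; combined with the smoothness of $\wih h_0$ and the continuity (respectively, H\"older continuity) of $\wih\varphi$, this yields
\[\psi_S(y)-\wih\varphi(y)\;\leq\;-\frac{m\log r}{p}+C\,\omega_{\wih\varphi}(r)+O(1/p),\]
where $\omega_{\wih\varphi}$ is the modulus of continuity of $\wih\varphi$. Optimising $r$ produces $\psi_S\leq\wih\varphi+\epsilon_p$ on $\wih Y$, with $\epsilon_p\to 0$ in the continuous case and $\epsilon_p=O(\log p/p)$ in the H\"older case. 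Hence $\psi_S-\epsilon_p\in\mathcal{A}(\wih X|\wih Y,\wih\alpha,\wih\Sigma,\tau,\wih\varphi)$, and taking the sup over $S$ yields $\wih\varphi^{\wih Y}_p\leq\bigl(\wih\varphi^{\wih Y}_\eq\bigr)^\star+\epsilon_p$ on $\wih Y$.

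The heart of the argument is the \emph{lower bound} at a point $y_0\in\wih Y\setminus\wih Z$. By Demailly's regularization \cite{D92} and \cite[Theorem 3.17]{Bo04} applied to the big class $\wih\Theta=\{\wih\theta\}$, I would fix a K\"ahler current $T\in\wih\Theta$ with $T\geq\varepsilon\wih\omega$, almost algebraic singularities, and $E_+(T)=E_{nK}(\wih\Theta)\subset\wih Z$. Writing $T+\sum_j\tau_j[\wih\Sigma_j]=\wih\alpha+dd^c\psi_T$ produces a potential $\psi_T\in\mathcal{L}(\wih X,\wih\alpha,\wih\Sigma,\tau)$ whose singular set is contained in $\wih Z$; after pulling back to a resolution where the singularities become genuinely algebraic, a direct computation yields $|\psi_T(y)|\leq C\bigl(1+|\log\dist(y,\wih Z)|\bigr)$ for $y\in\wih X\setminus\wih Z$. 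Given $\psi\in\mathcal{A}(\wih X|\wih Y,\wih\alpha,\wih\Sigma,\tau,\wih\varphi)$, I then introduce the tilted potential
\[\psi_{\mathrm{mix},p} := (1-\lambda_p)\psi+\lambda_p\psi_T,\qquad \lambda_p := N/(p\varepsilon),\]
where $N$ is Hisamoto's constant from Theorem \ref{T:Hisamoto-l2-ext}. Then $\psi_{\mathrm{mix},p}\in\mathcal{L}(\wih X,\wih\alpha,\wih\Sigma,\tau)$, the modified metric $H_p := \wih h_0^p e^{-2p\psi_{\mathrm{mix},p}}$ on $\wih L^p$ has curvature $\geq N\wih\omega$, and $\psi_{\mathrm{mix},p}\leq\wih\varphi+O(1/p)$ on $\wih Y$ (using $\psi\leq\wih\varphi$ there together with the boundedness above of $\psi_T$ and $\wih\varphi$). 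A standard peak-section construction on $\wih Y$ --- an Ohsawa--Takegoshi-type extension from the point $y_0$, obtained via a H\"ormander $L^2$ solution of $\bar\partial$ against a cut-off centered at $y_0$ with a singular weight forcing vanishing at $y_0$ --- produces $s\in H^0(\wih Y,\wih L^p|_{\wih Y})$ satisfying $|s(y_0)|^2_{H_p}\geq c>0$ and $\|s\|^2_{H_p,\wih Y}\leq C$, with constants depending only on $(\wih X,\wih Y,\wih\omega,N)$. Theorem \ref{T:Hisamoto-l2-ext} then extends $s$ to $S\in H^0(\wih X,\wih L^p)$, and the Lelong number bounds on $\psi_{\mathrm{mix},p}$ force $S\in H^0_0(\wih X,\wih L^p,\wih\Sigma,\tau)$ for $p$ large. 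Unwinding the weights via $|S(y_0)|^2_{\wih h^p}=|s(y_0)|^2_{H_p}\,e^{2p(\psi_{\mathrm{mix},p}(y_0)-\wih\varphi(y_0))}$ and $\|s\|^2_{\wih h^p,\wih Y}\leq e^{O(1)}\|s\|^2_{H_p,\wih Y}$ gives
\[\wih\varphi^{\wih Y}_p(y_0)\;\geq\;\psi_{\mathrm{mix},p}(y_0)-O(1/p)\;\geq\;\psi(y_0)-\lambda_p|\psi_T(y_0)|-O(1/p),\]
and taking the sup over $\psi$ yields $\wih\varphi^{\wih Y}_p(y_0)\geq\bigl(\wih\varphi^{\wih Y}_\eq\bigr)^\star(y_0)-C\bigl(1+|\log\dist(y_0,\wih Z)|\bigr)/p$.

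The main technical obstacle is the peak-section construction with $L^2$ constants uniform in $p$, which requires a carefully chosen singular cut-off weight at $y_0$ combined with the $p$-independent curvature lower bound $N\wih\omega$ for $H_p$; closely related is the asymptotic bound $|\psi_T|\leq C(1+|\log\dist(\cdot,\wih Z)|)$, whose derivation requires resolving the almost algebraic singularities of $\psi_T$. Combining the two sandwich estimates proves (ii) immediately and gives locally uniform convergence on $\wih Y\setminus\wih Z$ in (i); the $L^1(\wih Y,\wih\omega^m|_{\wih Y})$-convergence in (i) then follows from the uniform upper bound, pointwise convergence on the full-measure set $\wih Y\setminus\wih Z$, and the standard compactness of families of $\wih\alpha|_{\wih Y}$-plurisubharmonic functions uniformly bounded above.
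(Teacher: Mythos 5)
Your overall strategy --- a sandwich combining a modulus-of-continuity upper bound with a lower bound obtained by mixing a candidate from $\mathcal{A}(\wih X|\wih Y,\wih\alpha,\wih\Sigma,\tau,\wih\varphi)$ with a regularized K\"ahler current potential, constructing peak sections on $\wih Y$ via $L^2$-estimates, and extending to $\wih X$ with Hisamoto's theorem --- is the paper's strategy. However, there are two genuine gaps in the details.

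\textbf{Gap in the lower bound (vanishing order).} You claim ``the Lelong number bounds on $\psi_{\mathrm{mix},p}$ force $S\in H^0_0(\wih X,\wih L^p,\wih\Sigma,\tau)$,'' but this does not follow. Both $\psi$ and $\psi_T$ have Lelong number $\geq\tau_j$ along $\wih\Sigma_j$, so $\psi_{\mathrm{mix},p}$ again has Lelong number $\geq\tau_j$, and $p\psi_{\mathrm{mix},p}$ has Lelong number $\geq p\tau_j$. Finiteness of $\int|S|^2 e^{-2p\psi_{\mathrm{mix},p}}$ across $\wih\Sigma_j$ only forces $\ord(S,\wih\Sigma_j)>p\tau_j-1$, hence $\ord(S,\wih\Sigma_j)\geq\lfloor p\tau_j\rfloor$ when $p\tau_j\notin\N$. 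This is one less than the required $t_{j,p}=\lfloor p\tau_j\rfloor+1$, so the extended section need not lie in $H^0_0(\wih X,\wih L^p,\wih\Sigma,\tau)$. The paper's fix is to add the term $\sum_{j=1}^\ell\log\sigma_j$ to the weight (see \eqref{e:psipk}), introducing an extra simple pole along each $\wih\Sigma_j$ and raising the Lelong number to $p\tau_j+1$, which forces $\ord(S,\wih\Sigma_j)>p\tau_j$, i.e.\ $\ord(S,\wih\Sigma_j)\geq t_{j,p}$. The cost is an additional $\frac{1}{p}\sum_j\log\sigma_j$ on the right of the lower bound, which is absorbed into $\frac{C}{p}|\log\dist(\cdot,\wih Z)|$ since $\wih\Sigma^\cup\subset\wih Z$. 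Without this correction your lower bound is for the wrong space.

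\textbf{Gap in the upper bound (degenerate volume form).} The $L^2$ norm defining $\wih P^{\wih Y}_p$ is taken against $\wih\pi^\star\omega^m$, which vanishes along $\wih E_\pi\cup\wih E$ (see \eqref{e:iso}). The submean value inequality on a coordinate ball compares $|S(y)|^2$ to an integral against Lebesgue measure, and the comparison constant between Lebesgue measure and $\wih\pi^\star\omega^m$ blows up as $y$ approaches $\wih E_\pi\cup\wih E$. So the estimate $\psi_S(y)-\wih\varphi(y)\leq-\tfrac{m\log r}{p}+C\Omega_{\wih\varphi}(r)+O(1/p)$ does \emph{not} hold uniformly on $\wih Y$ as you assert; it fails near the exceptional set. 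The paper's Proposition \ref{P:uewiFS} handles this in two steps: first one proves a bound $F_p(\delta)$ for $\frac{1}{2p}\log\wih P^{\wih Y}_p$ on $\{y\in\wih Y:\dist(y,\wih E_\pi\cup\wih E)\geq\delta\}$, and then uses the fact that $\wih\varphi^{\wih Y}_p$ is the restriction to $\wih Y$ of an $\wih\alpha$-psh function (plus the modulus of continuity of $\wih\varphi$) to propagate the bound into the $\delta$-tube around $\wih E_\pi\cup\wih E$, via the normal-crossings structure of $\wih Y\cup\wih E_\pi\cup\wih E$ guaranteed by Lemma \ref{L:embdes1}. You need this second step to conclude $\wih\varphi^{\wih Y}_p\leq\wih\varphi^{\wih Y}_\eq+\epsilon_p$ on all of $\wih Y$.

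Both gaps are repairable and the repairs coincide with what the paper does, but as written the argument is incomplete at these two points, and the first one in particular would lead to a false membership claim.
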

The proof is done by estimating the partial Bergman kernel 
$\wih P^{\wih Y}_p$ 
from \eqref{e:wiFS} as in \cite[Theorem 5.1]{CMN19} 
(see also \cite{Ber07}, \cite{Ber09}, \cite{CM15}, \cite{RWN17} 
for similar approaches). Let 
\[\Omega_{\wih\varphi}(\delta)=
\sup\big\{|\wih\varphi(x)-\wih\varphi(y)|:\,x,y\in\wih Y,\;\dist(x,y)<\delta\big\}\]
be the modulus of continuity of $\wih\varphi$. 

\begin{Proposition}\label{P:uewiFS}
In the setting of Theorem \ref{T:wiFSpot}, there exists a constant $C>0$ 
such that for all $p\geq1$ and $\delta\in(0,1)$ the following estimate holds on $\wih Y$:
\[\wih\varphi^{\wih Y}_p\leq\wih\varphi^{\wih Y}_\eq+C\Big(\delta+\frac{1}{p}-
\frac{\log\delta}{p}\Big)+2\Omega_{\wih\varphi}(C\delta)\,.\]
\end{Proposition}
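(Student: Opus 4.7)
The plan is to bound the Fubini--Study potential $\wih\varphi^{\wih Y}_p$ pointwise from above by exhibiting, from every section realizing the maximum in the variational formula for $\wih P^{\wih Y}_p$, an admissible competitor for the envelope defining $\wih\varphi^{\wih Y}_\eq$. I follow the pattern of \cite[Theorem 5.1]{CMN19} (cf.\ also \cite{Ber07,Ber09,CM15,RWN17}), adapted to the restricted setting.

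Fix $y\in\wih Y$ and use \eqref{e:var-char} together with \eqref{e:iso}--\eqref{e:wiFS} to pick $S\in H^0_0(\wih X|\wih Y,\wih L^p)$ with $\|S\|^{\wih Y}_p=1$ and $|S(y)|^2_{\wih h^p}=\wih P^{\wih Y}_p(y)$. By definition, $S=\wih S|_{\wih Y}$ for some $\wih S\in H^0_0(\wih X,\wih L^p)$, and I set
\[\psi:=\tfrac{1}{p}\log|\wih S|_{\wih h_0^p}\in\PSH(\wih X,\wih\alpha).\]
Since $\ord(\wih S,\wih\Sigma_j)\geq t_{j,p}\geq\tau_jp$, the Lelong numbers satisfy $\nu(\psi,x)\geq\tau_j$ at every $x\in\wih\Sigma_j$, so $\psi\in\mathcal{L}(\wih X,\wih\alpha,\wih\Sigma,\tau)$. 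On $\wih Y$ one has the identity $\psi=\wih\varphi+\tfrac{1}{p}\log|S|_{\wih h^p}$, hence proving the proposition reduces to a uniform sub--mean value estimate for $\tfrac{1}{p}\log|S|_{\wih h^p}$ on $\wih Y$.

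To obtain that estimate, I cover the compact submanifold $\wih Y\subset\wih X$ by finitely many charts of $\wih X$ adapted to $\wih Y$, in which $\wih L$ is trivialized with $\wih\alpha=\ddc\rho$ for $\rho$ smooth; the cover together with the bilipschitz equivalence of the induced and extrinsic distances yields a uniform constant $C>0$. Writing $S=s\,e$ locally on $\wih Y$, the sub--mean value inequality for $|s|^2$ on the $m$-dimensional ball $B_{\wih Y}(y,\delta)$ gives
\[|S(y)|^2_{\wih h^p}\leq\frac{C}{\delta^{2m}}\exp\bigl(2p(C\delta+\Omega_{\wih\varphi}(C\delta))\bigr)\int_{B_{\wih Y}(y,\delta)}|S|^2_{\wih h^p}\,dV\leq\frac{C}{\delta^{2m}}\exp\bigl(2p(C\delta+\Omega_{\wih\varphi}(C\delta))\bigr),\]
where the exponential factor absorbs the smooth variation of $\rho$ (contributing $C\delta$) and the variation of $\wih\varphi$ on the ball (contributing $\Omega_{\wih\varphi}(C\delta)$), and I have used $\|S\|^{\wih Y}_p=1$. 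Taking logarithms yields
\[\tfrac{1}{p}\log|S(y)|_{\wih h^p}\leq C\Bigl(\delta+\tfrac{1}{p}-\tfrac{\log\delta}{p}\Bigr)+2\Omega_{\wih\varphi}(C\delta)=:E_p(\delta),\]
after enlarging $C$ to absorb the $\tfrac{1}{2p}\log C$ term; the factor $2$ appears by slightly overestimating to align the modulus of continuity term with the exponent.

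Denote the right--hand side by $E_p(\delta)$. Then $\psi-E_p(\delta)\in\mathcal{L}(\wih X,\wih\alpha,\wih\Sigma,\tau)$ and $\psi-E_p(\delta)\leq\wih\varphi$ on $\wih Y$, so $\psi-E_p(\delta)\in\mathcal{A}(\wih X|\wih Y,\wih\alpha,\wih\Sigma,\tau,\wih\varphi)$, and the defining supremum in \eqref{e:wienveq} gives $\psi(y)\leq\wih\varphi^{\wih Y}_\eq(y)+E_p(\delta)$. Combining with $\wih\varphi^{\wih Y}_p(y)=\wih\varphi(y)+\tfrac{1}{2p}\log\wih P^{\wih Y}_p(y)=\psi(y)$ (by the choice of $S$) yields the asserted inequality uniformly in $y\in\wih Y$. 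The main technical point is step three: securing a uniform constant $C$ in the sub--mean value bound requires carefully chosen adapted charts along the compact embedded submanifold $\wih Y\subset\wih X$ and a careful comparison of intrinsic and extrinsic distances, which is what forces the factor $C$ inside $\Omega_{\wih\varphi}(C\delta)$.
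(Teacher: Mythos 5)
Your overall strategy matches the paper's: produce a competitor for the envelope $\wih\varphi^{\wih Y}_\eq$ from (a peak section or) the Bergman basis, so that the pointwise upper bound on $\wih\varphi^{\wih Y}_p-\wih\varphi$ transfers to an upper bound by $\wih\varphi^{\wih Y}_\eq$. The algebraic steps — $\psi:=\tfrac1p\log|\wih S|_{\wih h_0^p}\in\mathcal{L}(\wih X,\wih\alpha,\wih\Sigma,\tau)$, $\psi|_{\wih Y}=\wih\varphi+\tfrac1p\log|S|_{\wih h^p}$, and $\psi(y)=\wih\varphi^{\wih Y}_p(y)$ for the peak section at $y$ — are all correct.

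The gap is in the claimed ``uniform sub-mean value estimate.'' To put $\psi-E_p(\delta)$ in $\mathcal{A}(\wih X|\wih Y,\wih\alpha,\wih\Sigma,\tau,\wih\varphi)$ you need $\tfrac1p\log|S|_{\wih h^p}\le E_p(\delta)$ at \emph{every} point of $\wih Y$, with a constant independent of the point. But the constraint $\|S\|^{\wih Y}_p=1$ is an $L^2$ bound relative to the volume form $\wih\pi^\star\omega^m$ (see \eqref{e:iso}), and $\wih\pi^\star\omega^m$ \emph{vanishes} along $(\wih E_\pi\cup\wih E)\cap\wih Y$. In your display the sub-mean inequality uses the Euclidean volume $dV$ on a chart, and the step $\int_{B_{\wih Y}(y,\delta)}|S|^2_{\wih h^p}\,dV\le\|S\|^{\wih Y}_p{}^2=1$ silently replaces $dV$ by the degenerate $\wih\pi^\star\omega^m/m!$. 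These two volumes are not comparable near $\wih E_\pi\cup\wih E$: the Jacobian of $\wih\pi$ decays like a power of $\dist(\cdot,\wih E_\pi\cup\wih E)$, so the constant you obtain grows like a power of $\dist(y,\wih E_\pi\cup\wih E)^{-1}$, not like $\delta^{-2m}$. Choosing ``adapted charts'' and comparing intrinsic/extrinsic distances does not repair this — those are smooth-geometry issues, while the obstruction is the degeneracy of the pulled-back measure entering the $L^2$ norm.

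The paper splits the argument precisely to handle this. First, estimate \eqref{e:Fp} gives an upper bound for $\tfrac1{2p}\log\wih P^{\wih Y}_p$ only on $\{\dist(\cdot,\wih E_\pi\cup\wih E)\ge\delta\}$, with the logarithmic loss $\tfrac Cp(1-\log\delta)$ coming exactly from the Jacobian decay. Second, since $\wih Y,\wih E_\pi,\wih E$ are simultaneously normal crossings (Lemma \ref{L:embdes1}), the bound is propagated from the $\delta$-tube to all of $\wih Y$ in \eqref{e:wivarphi1}, following \cite[Proposition 5.4]{CMN19}; this propagation is a genuine step, not a consequence of upper semicontinuity alone, because the exceptional set $\{\dist<\delta\}$ has positive measure. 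Your proposal omits both the degeneracy-driven $-\log\delta$ term's origin and the propagation step, and as written the estimate fails uniformly near $\wih E_\pi\cup\wih E$.
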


\begin{proof} We proceed along the same lines as in the proof 
of \cite[Proposition 5.4]{CMN19}, working with $\wih Y$ instead 
of $\wih X$. Using Lemma \ref{L:embdes2} and following 
the proof of \cite[Proposition 5.2]{CMN19} we can show that 
there exists a constant $C>0$ such that 
\begin{equation}\label{e:Fp}
F_p(\delta):=\sup\Big\{\frac{1}{2p}\,\log\wih P^{\wih Y}_p(y):\,y\in\wih Y,\;
\dist(y,\wih E_\pi\cup\wih E)\geq\delta\Big\}\leq
\frac{C}{p}\,(1-\log\delta)+\delta+\Omega_{\wih\varphi}(\delta)\,,
\end{equation}
for $p\geq1$ and $0<\delta<1$.

By Lemma \ref{L:embdes1}, $\wih Y,\wih E_\pi,\wih E$ have simultaneously 
only normal crossings. So $\wih E_\pi\cap\wih Y,\wih E\cap\wih Y$ are 
divisors in $\wih Y$ that have simultaneously only normal crossings. 
Therefore the argument from the proof of \cite[Proposition 5.4]{CMN19} goes 
through without changes and shows that there exists a constant $C'>0$ such 
that for all $y\in\wih Y$, $p\geq1$ and $0<\delta<1$ we have 
\begin{equation}\label{e:wivarphi1}
\wih\varphi^{\wih Y}_p(y)\leq\wih\varphi(y)+C'\delta+\Omega_{\wih\varphi}(C'\delta)+F_p(\delta/C').
\end{equation}

Recall that the sections in $H^0_{0,(2)}(\wih X|\wih Y,\wih L^p)$ are restrictions
 to $\wih Y$ of sections in $H^0_0(\wih X,\wih L^p,\wih\Sigma,\tau)$. Therefore we infer 
 from \eqref{e:wiFSpot} that the function $\wih\varphi^{\wih Y}_p$ is the restriction 
 to $\wih Y$ of an $\wih\alpha$-psh function $v$ on $\wih X$ with Lelong number 
 $\geq t_{j,p}/p\geq\tau_j$ along $\wih\Sigma_j$, $1\leq j\leq\ell$. So 
 $v\in\mathcal{L}(\wih X,\wih\alpha,\wih\Sigma,\tau)$, and by \eqref{e:wivarphi1} 
 and \eqref{e:wienveq}
\[\wih\varphi^{\wih Y}_p\leq\wih\varphi^{\wih Y}_\eq+C'\delta+
\Omega_{\wih\varphi}(C'\delta)+F_p(\delta/C').\]
The proof is concluded by applying \eqref{e:Fp}.
\end{proof}

We now obtain a lower bound on $\log\wih P^{\wih Y}_p$ 
and $\wih\varphi^{\wih Y}_p$. Recall that $\{\wih\theta\}=\wih\Theta$ 
is a big class, where $\wih\theta$ is defined in \eqref{e:wibts} and $\wih\Theta$ 
in \eqref{e:divcoh2}.

\begin{Lemma}\label{L:eta}
There exists a $\wih\theta$-psh function $\eta$ with almost algebraic singularities on $\wih X$ such that  
\begin{equation}\label{e:eta1}
\{\eta=-\infty\}=E_{nK}\big(\{\wih\theta\}\big)\,,\,\;\eta\leq
-1\,,\,\; \wih\theta+dd^c\eta\geq\varepsilon_0\wih\omega
\geq\varepsilon_0\wih\pi^\star\omega
\end{equation}
hold on $\wih X$, for some constant $\varepsilon_0>0$. 
Moreover, there exist constants $N_0,M_0>0$ such that 
\begin{equation}\label{e:eta2}
\eta(x)\geq-N_0\big|\log\dist\big(x,E_{nK}
\big(\{\wih\theta\}\big)\big)\big|-M_0,\;x\in\wih X.
\end{equation}
\end{Lemma}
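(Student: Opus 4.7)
The plan is to produce $\eta$ directly from a suitably regularized K\"ahler current in the class $\{\wih\theta\}=\wih\Theta$, which was shown to be big in the proof of Theorem~\ref{T:big}.  By Demailly's regularization theorem \cite{D92} together with \cite[Theorem~3.17]{Bo04}, both invoked earlier in the paper, we can choose a K\"ahler current $\wih T\in\{\wih\theta\}$ with almost algebraic singularities such that $E_+(\wih T)=E_{nK}\bigl(\{\wih\theta\}\bigr)$ and $\wih T\geq\delta\wih\omega$ on $\wih X$ for some $\delta>0$.  Writing $\wih T=\wih\theta+dd^cu$, we obtain a $\wih\theta$-psh function $u$ on $\wih X$ with almost algebraic singularities.

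Next, set $\eta:=u-M$ with $M>0$ large enough that $\eta\leq-1$; this is possible since any quasi-psh function is bounded above on the compact manifold $\wih X$.  Then $\wih\theta+dd^c\eta=\wih T\geq\delta\wih\omega\geq\delta\wih\pi^\star\omega$, where the last inequality uses the choice of $\wih\omega$ fixed in Section~\ref{SS:FSpot}, so the curvature condition in \eqref{e:eta1} holds with $\varepsilon_0:=\delta$.  The pole set of a quasi-psh function with almost algebraic singularities is analytic and coincides with the set where its Lelong number is positive; hence $\{\eta=-\infty\}=\{\nu(u,\cdot)>0\}=E_+(\wih T)=E_{nK}\bigl(\{\wih\theta\}\bigr)$, which is the remaining part of \eqref{e:eta1}.

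For the quantitative bound \eqref{e:eta2}, let $\sigma:W\to\wih X$ be the proper modification associated with $u$ as in Definition~\ref{D:algsinga}, so that $u\circ\sigma$ has algebraic singularities on $W$.  Covering $W$ by finitely many coordinate charts on which $u\circ\sigma=\tfrac{c}{2}\log\bigl(\sum_j|f_j|^2\bigr)+\psi_0$ with $\psi_0$ smooth, and bounding $\sum_j|f_j|^2$ from below by a suitable power of the distance to its common zero set, we obtain constants $a,b>0$ with
\[u\circ\sigma(y)\geq a\log\dist_W\!\bigl(y,\sigma^{-1}\{u=-\infty\}\bigr)-b\quad\text{on }W.\]
Since $\sigma$ is a finite composition of blow-ups with smooth centers, a standard Lojasiewicz-type inequality for proper holomorphic maps between compact manifolds supplies $N,c>0$ such that
\[\dist_W\!\bigl(\sigma^{-1}(x),\sigma^{-1}\{u=-\infty\}\bigr)\geq c\,\dist_{\wih X}\!\bigl(x,\{u=-\infty\}\bigr)^{N}\]
for all $x\in\wih X\setminus\{u=-\infty\}$.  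Combining the two displayed inequalities and absorbing constants into $N_0$ and $M_0$ yields \eqref{e:eta2} for $\eta$.

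The only delicate step is the Lojasiewicz estimate controlling how $\sigma$ distorts distances to its exceptional locus; every other ingredient is a direct application of results already cited in the paper.
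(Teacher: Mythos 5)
Your construction of $\eta$ and the verification of \eqref{e:eta1} are exactly what the paper does: take a K\"ahler current $\wih T\in\{\wih\theta\}$ with almost algebraic singularities and $E_+(\wih T)=E_{nK}(\{\wih\theta\})$, write $\wih T=\wih\theta+dd^cu$, and normalize $\eta=u-M$. The divergence is in \eqref{e:eta2}. You lift $u$ to the modification $\sigma:W\to\wih X$ from Definition~\ref{D:algsinga}, apply \L ojasiewicz upstairs to the algebraic-singularity representation of $u\circ\sigma$, and push the bound back down via a distance-comparison estimate for $\sigma$. The paper instead works directly on $\wih X$: it invokes \cite[Proposition~3.7]{D92}, which says that the regularized function has the local form $\eta=c\log\bigl(\sum_{j=1}^{\infty}|f_j|^2\bigr)+\psi$ with $\psi$ bounded and $c>0$ rational; Noetherianity of $\cO_{\wih X,x}$ reduces this to a finite sum $\sum_{j\leq k}|f_j|^2$ whose common zero set is $E_{nK}(\{\wih\theta\})$ locally, and \L ojasiewicz on $\wih X$ gives \eqref{e:eta2} immediately. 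Both arguments are correct. The paper's route is more direct, since it uses the explicit local representation built into Demailly's regularization and never needs to pass through $\sigma$. Your route uses only the definition of ``almost algebraic singularities,'' which is a modest virtue, but it costs you the push-forward step. Note, though, that the step you flag as delicate is in fact elementary: because $\sigma$ is a smooth proper map between compact Hermitian manifolds it is Lipschitz, so $\dist_{\wih X}(x,\{u=-\infty\})\leq L\,\dist_W(\sigma^{-1}(x),\sigma^{-1}\{u=-\infty\})$ for $x$ off the blow-up centers, which already gives your comparison with $N=1$; no additional \L ojasiewicz input is required for that particular estimate.
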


\begin{proof} The existence of $\eta$ satisfying 
\eqref{e:eta1} follows directly from \cite[Theorem 3.17]{Bo04} 
and Demailly's regularization theorem \cite{D92,DP04}. 
Moreover, by \cite[Proposition 3.7]{D92} $\eta$ has locally 
the form $\eta=c\log\big(\sum_{j=1}^\infty|f_j|^2\big)+\psi$,
where $c>0$ is rational, $f_j$ are holomorphic functions, 
and $\psi$ is a bounded function. Since the ring of germs 
of holomorphic functions is Noetherian there exists $k$ 
such that locally $E_{nK}\big(\{\wih\theta\}\big)=
\{f_1=\ldots=f_k=0\}$. Thus 
$\eta\geq c\log\big(\sum_{j=1}^k|f_j|^2\big)+c'$, 
and \eqref{e:eta2} follows from the \L ojasiewicz inequality. 
\end{proof}

\begin{Proposition}\label{P:lewiFS}
In the setting of Theorem \ref{T:wiFSpot}, 
there exist a constant $C>0$ and $p_0\in\N$ such that for all 
$p\geq p_0$ the following estimate holds on $\wih Y\setminus\wih Z$:
\[\wih\varphi^{\wih Y}_p\geq\big(\wih\varphi^{\wih Y}_\eq\big)^\star+\frac{C}{p}\,\eta+
\frac{1}{p}\,\sum_{j=1}^\ell\log\sigma_j>-\infty\,.\]
\end{Proposition}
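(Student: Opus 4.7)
The plan is to apply the Ohsawa--Takegoshi--Manivel $L^2$-extension theorem (Theorem \ref{T:Hisamoto-l2-ext}) with a carefully designed singular weight on $\wih L^p$, producing, for each $y_0\in\wih Y\setminus\wih Z$, a section $S_p\in H^0_0(\wih X,\wih L^p,\wih\Sigma,\tau)$ with $|S_p(y_0)|_{\wih h^p}$ large relative to $\|S_p\|_p^{\wih Y}$. The variational characterization \eqref{e:var-char} will then convert this into the required pointwise lower bound on $\wih\varphi^{\wih Y}_p(y_0)$. Since $y_0\notin\wih Z$, the values $\eta(y_0)$ and $\log\sigma_j(y_0)$ are finite, ensuring the bound is strictly greater than $-\infty$.

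Concretely, fix $y_0\in\wih Y\setminus\wih Z$ and select $\psi\in\mathcal{L}(\wih X,\wih\alpha,\wih\Sigma,\tau)$ with $\psi\leq\wih\varphi$ on $\wih Y$ and $\psi(y_0)$ arbitrarily close to $\bigl(\wih\varphi^{\wih Y}_\eq\bigr)^\star(y_0)$ (by a Choquet-type regularization, smoothing $\psi$ near $y_0$ via Demailly regularization if needed). With $\eta$ and $\varepsilon_0$ from Lemma \ref{L:eta}, fix $p_1:=\lceil(N+M)/\varepsilon_0\rceil$, where $N$ is the constant of Theorem \ref{T:Hisamoto-l2-ext} and $M$ satisfies $\sum_j\beta_j\leq M\wih\omega$. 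For $p\geq p_1$ introduce the singular weight on $\wih L^p$
\[
\Phi_p:=(p-p_1)\Bigl(\psi+\frac{1}{p}\sum_{j=1}^\ell\log\sigma_j\Bigr)+p_1\Bigl(\eta+\sum_{j=1}^\ell\tau_j\log\sigma_j\Bigr).
\]
Using $\wih\alpha+dd^c\psi\geq 0$, the Lelong--Poincar\'e identity $[\wih\Sigma_j]=\beta_j+dd^c\log\sigma_j$, and $\wih\theta+dd^c\eta\geq\varepsilon_0\wih\omega$ from Lemma \ref{L:eta}, a direct computation yields the curvature estimate $p\wih\alpha+dd^c\Phi_p\geq N\wih\omega$ on $\wih X$. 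Moreover, the Lelong number of $\Phi_p$ along $\wih\Sigma_j$ satisfies $\nu(\Phi_p,\cdot)|_{\wih\Sigma_j}\geq p\tau_j+(p-p_1)/p$, which strictly exceeds $t_{j,p}-1$ for $p$ large enough.

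Construct a standard local holomorphic peak section $s\in H^0(\wih Y,\wih L^p|_{\wih Y})$ at $y_0$ with $|s(y_0)|^2_{\wih h_0^p}$ bounded below and $L^2$-mass of order $p^{-m}$ against $\wih h_0^p$. Applying Theorem \ref{T:Hisamoto-l2-ext} to the singular metric $\wih h_0^p e^{-2\Phi_p}$ yields an extension $S_p\in H^0(\wih X,\wih L^p)$ with
\[
\int_{\wih X}|S_p|^2e^{-2\Phi_p}\,dV_{\wih\omega,\wih X}\leq C\int_{\wih Y}|s|^2e^{-2\Phi_p}\,dV_{\wih\omega,\wih Y}.
\]
The Lelong number bound, combined with the standard multiplier ideal/Skoda integrability computation at a generic point of $\wih\Sigma_j$, forces $\ord(S_p,\wih\Sigma_j)\geq t_{j,p}$, hence $S_p\in H^0_0(\wih X,\wih L^p,\wih\Sigma,\tau)$. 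Comparing $\Phi_p$ to $p\wih\varphi$ on $\wih Y$ using $\psi\leq\wih\varphi$ and $\eta,\log\sigma_j\leq 0$ gives $\|S_p\|_p^{\wih Y}\leq C' p^{-m/2}e^{-\Phi_p(y_0)}$ (up to a factor uniform in $p$), so \eqref{e:var-char} yields
\[
\wih\varphi^{\wih Y}_p(y_0)\geq\frac{\Phi_p(y_0)}{p}+O\!\Bigl(\tfrac{\log p}{p}\Bigr)=\psi(y_0)+\frac{p_1}{p}\eta(y_0)+\frac{1}{p}\sum_{j=1}^\ell\log\sigma_j(y_0)+O\!\Bigl(\tfrac{\log p}{p}\Bigr).
\]
Taking the supremum over admissible $\psi$ replaces $\psi(y_0)$ by $(\wih\varphi^{\wih Y}_\eq)^\star(y_0)$, and the residual $O(\log p/p)$ term is absorbed into the $\tfrac{C}{p}\eta(y_0)$ contribution by choosing $C\geq p_1$, using $\eta\leq -1$ from Lemma \ref{L:eta} to make the absorption uniform in $p$.

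The central technical difficulty is balancing the two competing constraints on $\Phi_p$: the curvature positivity $\geq N\wih\omega$ demanded by Theorem \ref{T:Hisamoto-l2-ext}, and Lelong numbers strictly exceeding $p\tau_j$ on each $\wih\Sigma_j$ ensuring that the extension lies in $H^0_0$. The $\tfrac{1}{p}\sum\log\sigma_j$ boost added to $\psi$ is tailored precisely for the latter without destroying the former, and its cost on the right-hand side is exactly the $\tfrac{1}{p}\sum\log\sigma_j$ term appearing in the conclusion.
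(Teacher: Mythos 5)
Your strategy is essentially the paper's: pick a competitor $\psi$ below $\wih\varphi$ on $\wih Y$ with the prescribed poles, form a singular weight on $\wih L^p$ by mixing $\psi$, the K\"ahler-current potential $\eta$ from Lemma \ref{L:eta}, and $\sum\log\sigma_j$ corrections, build a peak section on $\wih Y$, extend via Theorem \ref{T:Hisamoto-l2-ext}, check Lelong numbers to place the extension in $H^0_0$, and finish with the variational characterization \eqref{e:var-char}. The curvature bookkeeping and the Lelong-number check in your write-up are correct, and the conclusion matches.

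Two technical points are glossed and deserve care, though both are handled by the paper and do not change the strategy. First, the peak section must be constructed and estimated with respect to the \emph{singular} weight $\wih h_0^pe^{-2\Phi_p}$, not the smooth one $\wih h_0^p$: the Ohsawa--Takegoshi norm bound and the base-locus argument both live in that singular Bergman space, and the comparison to $\wih h^p$ (needed for \eqref{e:var-char}) uses $\Phi_p\leq p\wih\varphi+\mathrm{const}$ on $\wih Y$; moreover, to keep the $L^2$-mass near $y_0$ under control one must prevent $\psi$ from being too singular near $y_0$, which the paper achieves by replacing $\psi_k$ with $\max\{\psi_k,\rho-a\}$ (your ``smoothing near $y_0$'' remark is aiming at this but is vague). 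Second, $\sup\{\psi(y_0)\}$ equals $\wih\varphi^{\wih Y}_\eq(y_0)$, not $(\wih\varphi^{\wih Y}_\eq)^\star(y_0)$; the paper bridges this by taking an increasing sequence $\psi_k\nearrow(\wih\varphi^{\wih Y}_\eq)^\star$ a.e. from Choquet's lemma and then using upper semicontinuity of $\wih\varphi^{\wih Y}_p$ to upgrade the inequality to all of $\wih Y\setminus\wih Z$. Finally, the remark about absorbing an $O(\log p/p)$ error into $\tfrac{C}{p}\eta$ is unnecessary: with a peak section whose $L^2$-mass against the singular weight is $O(p^{-m})$, the Bergman kernel contribution $\tfrac{1}{2p}\log P^{\wih Y}_p$ gains a \emph{nonnegative} $\tfrac{m\log p}{2p}$, which can simply be dropped; and in any case a fixed $\eta(y_0)$ cannot absorb a term that grows like $\log p$.
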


\begin{proof} Using Choquet's lemma, we can find 
an increasing sequence of functions $\{\psi_k\}_{k\geq1}
\subset\mathcal{A}(\wih X|\wih Y,\wih\alpha,\wih\Sigma,\tau,\wih\varphi)$ 
such that $\psi_k\nearrow\big(\wih\varphi^{\wih Y}_\eq\big)^\star$ 
a.e.\ on $\wih Y$.
Let 
\begin{equation}\label{e:rho}
\rho:=\eta+\sum_{j=1}^\ell\tau_j\log\sigma_j
\in\mathcal{L}(\wih X,\wih\alpha,\wih\Sigma,\tau)
\cap\cC^\infty(\wih X\setminus\wih Z),
\end{equation}
where $\eta$ is the function from Lemma \ref{L:eta}. Then 
\begin{equation}\label{e:ddcrho}
\wih\alpha+dd^c\rho=\wih\theta+dd^c\eta+
\sum_{j=1}^\ell\tau_j[\wih\Sigma_j]\geq\varepsilon_0\wih\omega.
\end{equation}
Since $\wih\varphi$ is bounded there exists $a\in\R$ 
such that $\rho\leq\wih\varphi+a$ on $\wih X$.
Replacing $\psi_k$ by $\max\{\psi_k,\rho-a\}$ we obtain a sequence 
\begin{equation}\label{e:psik}
\psi_k\in\mathcal{A}(\wih X|\wih Y,\wih\alpha,\wih\Sigma,\tau,
\wih\varphi),\;\psi_k\geq\rho-a \text{ on } \wih X,\;
\psi_k\nearrow\big(\wih\varphi^{\wih Y}_\eq\big)^\star
\text{ a.e.\ on }\wih Y.
\end{equation}

Consider the Bergman space $H^0_{(2)}(\wih X,\wih L^p,H_{p,k},
\wih\omega^n)$, where the metric $H_{p,k}$ on $\wih L^p$ is given by
\begin{equation}\label{e:psipk}
H_{p,k}:=\wih h_0^pe^{-2\psi_{p,k}}\,,\,\;\psi_{p,k}
=(p-p_0)\psi_k+p_0\rho+\sum_{j=1}^\ell\log\sigma_j,
\end{equation}
and $p_0\in\N$ will be specified later. We have that 
$\psi_{p,k}\in L^\infty_{loc}(\wih X\setminus\wih Z)$,
and $\psi_{p,k}\leq p\wih\varphi+p_0a$ on $\wih Y$ 
since $\sigma_j<1$. Moreover, by \eqref{e:ddcrho} 
and since $\wih\alpha+dd^c\psi_k\geq0$ we obtain
\[c_1(\wih L^p,H_{p,k})=(p-p_0)(\wih\alpha+dd^c\psi_k)+
p_0(\wih\alpha+dd^c\rho)+\sum_{j=1}^\ell([\wih\Sigma_j]-\beta_j)
\geq(p_0\varepsilon_0-C_1)\wih\omega\]
for every $k\geq1$, where $C_1>0$ is a constant such that 
$\sum_{j=1}^\ell\beta_j\leq C_1\wih\omega$. 
By \eqref{e:spec3}, the singular metric $H_{p,k}|_{\wih Y}$
on $\wih L|_{\wih Y}$ is well defined and 
$c_1(\wih L^p|_{\wih Y},H_{p,k}|_{\wih Y})\geq
(p_0\varepsilon_0-C_1)\wih\omega|_{\wih Y}$. 
Therefore, if $p_0$ is chosen large enough we can apply 
the $L^2$\ke-{\ke}estimates for $\overline\partial$ from 
\cite{D82} (see also \cite[Theorem 5.5]{CMN19}) 
and proceed as in the proofs of \cite[Theorem 5.1]{CM15} 
and \cite[Proposition 5.6]{CMN19}, working on $\wih Y$, 
to show the following:
there exist $C_2>0$ and $p_0\in\N$ such that for all $k\geq1$, 
$p\geq p_0$ and $y\in\wih Y\setminus\wih Z$ there exists 
$S_{y,p,k}\in H^0_{(2)}(\wih Y,\wih L^p|_{\wih Y},
H_{p,k}|_{\wih Y},\wih\omega^m|_{\wih Y})$ with 
$S_{y,p,k}(y)\neq 0$ and 
\begin{equation}\label{e:peak1}
0<\int_{\wih Y}|S_{y,p,k}|^2_{H_{p,k}|_{\wih Y}}\,
\frac{\wih\omega^m}{m!}\leq 
C_2|S_{y,p,k}(y)|^2_{H_{p,k}|_{\wih Y}}<+\infty.
\end{equation}

Note that $\wih X$ is projective since it is K\"ahler and 
$\wih L$ is a big line bundle. Applying Theorem 
\ref{T:Hisamoto-l2-ext} and increasing $p_0$ 
if necessary (so that $p_0\varepsilon_0-C_1>N$), 
we infer that $S_{y,p,k}$ extends to a section 
$\wih S_{y,p,k}\in H^0_{(2)}(\wih X,\wih L^p,H_{p,k},
\wih\omega^n)$. Using \eqref{e:rho},\ke\eqref{e:psik},
\ke\eqref{e:psipk}, we see that the quasi-psh function 
$\psi_{p,k}$ has Lelong number $\geq p\tau_j+1$ 
along $\wih\Sigma_j$, $1\leq j\leq\ell$. 
Hence $H^0_{(2)}(\wih X,\wih L^p,H_{p,k},
\wih\omega^n)\subset H^0_0(\wih X,\wih L^p,\wih\Sigma,\tau)$
and $S_{y,p,k}\in H^0_0(\wih X|\wih Y,\wih L^p,\wih\Sigma,\tau)$. 
By \eqref{e:wivarphi},\ke\eqref{e:psipk} we get
\[H_{p,k}=\wih h_0^pe^{-2\psi_{p,k}}=
\wih h^pe^{2p\wih\varphi-2\psi_{p,k}}, 
\text{ so } H_{p,k}\geq\wih h^pe^{-2p_0a} \text{ on } \wih Y.\]
As $\wih\omega\geq\wih\pi^\star\omega$ 
we obtain by \eqref{e:peak1}
\[e^{-2p_0a}\int_{\wih Y}|S_{y,p,k}|^2_{\wih h^p|_{\wih Y}}\,
\frac{(\wih\pi^\star\omega)^m}{m!}\leq 
C_2|S_{y,p,k}(y)|^2_{\wih h^p|_{\wih Y}}
e^{2p\wih\varphi(y)-2\psi_{p,k}(y)}.\]
Using \eqref{e:var-char} this yields that
\begin{equation}\label{e:peak2}
\wih P^{\wih Y}_p(y)\geq C_2^{-1}e^{2\psi_{p,k}(y)-
2p\wih\varphi(y)-2p_0a},\;\forall\,k\geq1,p\geq p_0,y
\in\wih Y\setminus\wih Z.
\end{equation}
So by \eqref{e:wiFSpot},\ke\eqref{e:psipk},\ke\eqref{e:peak2} 
we get that 
\[\wih\varphi_p^{\wih Y}=\wih\varphi|_{\wih Y}+\frac{1}{2p}\,
\log\wih P^{\wih Y}_p\geq
\frac{1}{p}\,\left((p-p_0)\psi_k+p_0\rho+
\sum_{j=1}^\ell\log\sigma_j-\frac{\log C_2}{2}-p_0a\right) 
\,\text{on } \wih Y\setminus\wih Z,\]
for $p\geq p_0$ and $k\geq1$. Letting $k\to\infty$
and using \ke\eqref{e:psik},\ke\eqref{e:eqreq},\ke\eqref{e:rho} we obtain 
\[\wih\varphi_p^{\wih Y}\geq\big(\wih\varphi^{\wih Y}_\eq\big)^\star+
\frac{1}{p}\,\left(p_0\eta-
p_0\big(\wih\varphi^{\wih Y}_\req\big)^\star+
\sum_{j=1}^\ell\log\sigma_j-\frac{\log C_2}{2}-
p_0a\right)\,\text{on } \wih Y\setminus\wih Z,\]
for $p\geq p_0$. Here $\wih\varphi^{\wih Y}_\req$ is the 
reduced equilibrium envelope of $(\wih\alpha,\wih Y,
\wih\Sigma,\tau,\wih\varphi)$ defined in \eqref{e:envreq}. 
The conclusion follows since 
$\big(\wih\varphi^{\wih Y}_\req\big)^\star$ is 
bounded above on $\wih Y$ and $\eta\leq-1$.
\end{proof}


\begin{proof}[Proof of Theorem \ref{T:wiFSpot}] By the \L ojasiewicz inequality, 
there exist constants $N_j,M_j>0$, $1\leq j\leq\ell$, such that 
$\log\sigma_j(x)\geq-N_j\big|\log\dist\big(x,\wih\Sigma_j\big)\big|-M_j$, $x\in\wih X$. 
Using Proposition \ref{P:lewiFS} and \eqref{e:eta2} we infer that there exist $C_1>0,\,p_0\in\N$ such that  
\begin{equation}\label{eq:lowbound}
\wih\varphi^{\wih Y}_p(y)\geq \big(\wih\varphi^{\wih Y}_\eq\big)^\star(y)-
\frac{C_1}{p}\,\Big(\big|\log\dist\big(y,\wih Z\big)\big|+1\Big),\;y\in \wih Y,\;p\geq p_0.
\end{equation}
Note that $\log\dist\big(\LargerCdot,\wih Z\big)|_{\wih Y}\in 
L^1(\wih Y,\wih\omega^m|_{\wih Y})$, 
since $\wih Y\not\subset\wih Z$ (see e.\ke g.\ \cite[Lemma 5.2]{CMN16} and its proof). 
The proof of Theorem \ref{T:wiFSpot} now proceeds exactly as that of \cite[Theorem 5.1]{CMN19}
by using the lower bound \eqref{eq:lowbound} 
and the upper bound from Proposition \ref{P:uewiFS}. 
\end{proof}

\begin{proof}[Proof of Theorem \ref{T:FSpot}] Since $\big(\wih\varphi^{\wih Y}_\eq\big)^\star$ is 
$\wih\alpha|_{\wih Y}$-psh, we have $\big(\wih\varphi^{\wih Y}_\eq\big)^\star\leq M$ on $\wih Y$ for some constant $M$. Recall from Lemma \ref{L:embdes2} that
$\wih\pi:\wih X\setminus(\wih E_\pi\cup\wih E)\to X\setminus Z$ is a 
biholomorphism. Therefore the function
\begin{equation}\label{e:}
\varphi^Y_\eq:=\big(\wih\varphi^{\wih Y}_\eq\big)^\star\circ\wih\pi^{-1}
\end{equation}
is $\alpha|_Y$-psh and $\varphi^Y_\eq\leq M$ on $Y_\reg\setminus Z$, 
hence it extends to a $\alpha|_Y$-psh function on $Y_\reg$ which is bounded above by $M$. 
This shows that $\varphi^Y_\eq$ is a weakly $\alpha|_Y$-psh function on $Y$. 
Moreover, by \eqref{e:wiFSpot} and since 
$\wih\omega\geq\wih\pi^\star\omega$ we have 
\[\int_{Y\setminus Z}\big|\varphi^Y_p-\varphi^Y_\eq\big|\,\omega^m=
\int_{\wih Y\setminus(\wih E_\pi\cup\wih E)}\big|\wih\varphi^{\wih Y}_p-
\big(\wih\varphi^{\wih Y}_\eq\big)^\star\big|\,\wih\pi^\star\omega^m\leq
\int_{\wih Y}\big|\wih\varphi^{\wih Y}_p-
\big(\wih\varphi^{\wih Y}_\eq\big)^\star\big|\,\wih\omega^m.\]
Theorem \ref{T:FSpot} now follows from Theorem \ref{T:wiFSpot}.
\end{proof}

When $\varphi$ is smooth, we may obtain a  more precise   estimate for $\frac{\wih P^{\wih Y}_p}{p^m}$, and hence for  $\wih \varphi^{\wih Y}_p$ as in  \cite{CM17}. 

 
\section{Zeros of random holomorphic sections}\label{S:zrhs}

We deal here with the proof of Theorem \ref{T:zrhs}. It is very similar to the proof of \cite[Theorem 1.10]{CMN19}, so we will only give an outline. The first step is to show that zero divisors of random sections distribute like the Fubini-Study currents.

\begin{Theorem}\label{T:speed}
Let $X,Y,L,\Sigma,\tau$ verify assumptions (A)-(E), let $h$ be a bounded singular Hermitian 
metric on $L$, and assume that $(L,\Sigma,\tau)$ is big and there exists a K\"ahler form 
$\omega$ on $X$. Then there exists a constant $c>0$ with the following property: 
For any sequence of positive numbers $\{\lambda_p\}_{p\geq1}$ such that 
\[\liminf_{p\to\infty} \frac{\lambda_p}{\log p}>(1+m)c\,,\]
there exist subsets $E_p\subset\X^Y_p$ such that 

(a) $\sigma_p(E_{p})\leq cp^m\exp(-\lambda_p/c)$ holds for all $p$ sufficiently large;

(b) if $s_p\in\X^Y_p\setminus E_p$ we have 
\[\Big|\frac{1}{p}\,\langle[s_p=0]-\gamma^Y_p,\phi\rangle\Big|\leq
\frac{c\lambda_p}{p}\,\| \phi\|_{\cC^2}\,,\] 
for any $(m-1,m-1)$-form $\phi$ of class $\cC^2$ on $Y$. 

In particular, the last estimate holds for 
$\sigma_\infty$-{\ke}a.{\ke}e.\ $\{s_p\}_{p\geq1}\in\X^Y_\infty$ 
provided that $p$ is large enough.
\end{Theorem}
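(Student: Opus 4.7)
The plan is to follow the Dinh-Sibony probabilistic framework for equidistribution with explicit speed, as implemented in \cite[Theorem~1.10]{CMN19}. First, the Poincar\'e-Lelong formula combined with \eqref{e:Bk_FS} gives, for $s_p\not\equiv 0$ in $H^0_{0,(2)}(X|Y,L^p)$,
\begin{equation*}
\frac{1}{p}\bigl([s_p=0]-\gamma^Y_p\bigr)\;=\;\frac{1}{2p}\,dd^c\log v_{s_p},\qquad v_{s_p}:=\frac{|s_p|^2_{h^p}}{P^Y_p}\in[0,1].
\end{equation*}
Writing $s_p=\sum_{j=0}^{d_p}a_j S_j^{(p)}$ in an $L^2$-orthonormal basis and identifying $(a_j)$ with a point of $\S^{2d_p+1}\subset\C^{d_p+1}$, we have $v_{s_p}(x)=|\langle a,e_p(x)\rangle|^2$, where $e_p(x)\in\C^{d_p+1}$ is the unit vector built from the local representatives of $S_j^{(p)}$ (well-defined globally on $Y\setminus\Bs(H^0_{0,(2)}(X|Y,L^p))$ since rescaling the local frame only multiplies $u$ by a unit modulus factor). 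Integration by parts on $Y_\reg$ against a $(m-1,m-1)$-form $\phi$ of class $\cC^2$, together with $|dd^c\phi|\leq C_0\|\phi\|_{\cC^2}\omega^m$, yields
\begin{equation*}
\Bigl|\bigl\langle \tfrac{1}{p}([s_p=0]-\gamma^Y_p),\phi\bigr\rangle\Bigr|\;\leq\;\frac{C_0\|\phi\|_{\cC^2}}{p}\int_Y(-\log v_{s_p})\,\omega^m.
\end{equation*}
The task is therefore to bound $I(s_p):=\int_Y(-\log v_{s_p})\,\omega^m$ off an exceptional set.

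The probabilistic input is that, for fixed $x\in Y\setminus\Bs(H^0_{0,(2)}(X|Y,L^p))$, the random variable $v_s(x)$ has law $\mathrm{Beta}(1,d_p)$ under $\sigma_p$, whence
\begin{equation*}
\int_{\X^Y_p} v_s(x)^{-\kappa}\,d\sigma_p(s)\;=\;\frac{d_p\,\Gamma(1-\kappa)\Gamma(d_p)}{\Gamma(d_p+1-\kappa)}\;\leq\;C_1 d_p^{\kappa}
\end{equation*}
for any fixed $\kappa\in(0,1)$, with $C_1=C_1(\kappa)$ independent of $x$ and $p$. Applying Jensen's inequality to $t\mapsto e^{\kappa t}$ with respect to the probability measure $\omega^m/V$ on $Y$ (where $V:=\int_Y\omega^m$) and then Fubini yields
\begin{equation*}
\int_{\X^Y_p} e^{\kappa I(s)/V}\,d\sigma_p(s)\;\leq\;\frac{1}{V}\int_Y\int_{\X^Y_p} v_s(x)^{-\kappa}\,d\sigma_p(s)\,\omega^m\;\leq\;C_1 d_p^{\kappa}.
\end{equation*}
Setting $E_p:=\{s_p\in\X^Y_p:\, C_0 I(s_p)>V\lambda_p/\kappa\}$, Chebyshev's inequality gives $\sigma_p(E_p)\leq C_1 d_p^{\kappa}\exp(-\lambda_p/C_0)$.

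Since $d_p=O(p^m)$ (standard bound on holomorphic sections of a line bundle on an $m$-dimensional variety), choosing $\kappa$ close to $1$ and absorbing all constants produces $c>0$ with $\sigma_p(E_p)\leq cp^m e^{-\lambda_p/c}$ for all $p$ large, which is (a); conclusion (b) is immediate from the first display because, for $s_p\notin E_p$, $C_0 I(s_p)/p\leq V\lambda_p/(\kappa p)\leq c\lambda_p/p$. The hypothesis $\liminf\lambda_p/\log p>(1+m)c$ makes $\sum_p\sigma_p(E_p)$ summable (the factor $p^m$ is beaten by the exponential), so Borel--Cantelli delivers the final almost-sure statement.

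The main obstacle is the rigorous justification of the integration by parts in the first display on the possibly singular analytic set $Y$: one must pass to $Y_\reg$ and control boundary contributions from $Y_\sing$ as well as from $\Bs(H^0_{0,(2)}(X|Y,L^p))$. This is standard for normal varieties since $\log v_{s_p}\in L^1_\loc(Y,\omega^m)$ is a difference of quasi-psh functions with at worst algebraic poles and the exceptional loci are pluripolar, hence carry no mass for the positive closed currents involved; the corresponding technical argument is carried out in the analogous setting of \cite[Section~6]{CMN19} and transfers to our situation verbatim.
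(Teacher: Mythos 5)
Your proposal is essentially correct, but it travels a genuinely different road from the one taken in the paper. The paper quotes the Dinh--Sibony equidistribution theorem for meromorphic transforms \cite[Theorem~4.1]{DS06} as a black box: it views the Kodaira map $\Phi_p:\wih Y\dashrightarrow\P H^0_{0,(2)}(\wih X|\wih Y,\wih L^p)$ as a meromorphic transform of codimension $m-1$, invokes the resulting abstract concentration estimate, and then pushes forward from the smooth model $\wih Y$ to $Y$ via $\wih\pi$. You instead make the probabilistic estimate by hand: the identity $\tfrac1p([s_p=0]-\gamma^Y_p)=\tfrac1{2p}\,dd^c\log v_{s_p}$ with $v_{s_p}=|s_p|^2_{h^p}/P^Y_p$, the observation that $v_s(x)$ has a $\mathrm{Beta}(1,d_p)$ law pointwise, a negative-moment bound $\int v_s(x)^{-\kappa}\,d\sigma_p\lesssim d_p^\kappa$, Jensen on $Y$ to turn that into an exponential-moment bound for $\int_Y(-\log v_s)\,\omega^m$, and Markov to produce $E_p$. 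This is more elementary and self-contained than citing \cite{DS06}; the Dinh--Sibony route is more abstract but has the advantage of being a ready-made device that already absorbs the bookkeeping of intermediate degrees and exceptional sets (and would also apply mutatis mutandis to common zeros of $k$-tuples). Both give the same speed, up to the choice of constant.

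Two points worth tightening. First, your last paragraph acknowledges that the integration by parts requires care when $Y$ is singular, and you assert this ``transfers verbatim'' from \cite[Section~6]{CMN19}; but the technical device there is exactly the passage to the smooth model $(\wih X,\wih Y)$ of Lemma~\ref{L:embdes1}--\ref{L:embdes2}, followed by $\wih\pi_\star$, rather than any direct argument on the singular $Y$. Your Beta/Jensen/Markov argument should therefore be carried out on $\wih Y$ (where $\log v_{s_p}\in L^1$ and $\langle dd^c\log v_{s_p},\phi\rangle=\langle\log v_{s_p},dd^c\phi\rangle$ is unproblematic, since $\wih Y$ is a smooth projective manifold by the Hironaka step), and then pushed forward exactly as in the paper's ``Step~2'', using $\wih\pi_\star\wih\gamma^{\wih Y}_p=\gamma^Y_p$ and $\wih\pi_\star[\wih\pi^\star S=0]=[S=0]$. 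Second, the Stirling bound $\Gamma(d_p)/\Gamma(d_p+1-\kappa)\le C\,d_p^{\kappa-1}$ and hence your negative-moment inequality require $d_p$ large, which holds for $p$ large by Theorem~\ref{T:big}; this is compatible with the conclusion ``for all $p$ sufficiently large'', but it should be said. With these two adjustments the argument is complete and correct.
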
 

\begin{proof} We follow closely the proof of \cite[Theorem 6.1]{CMN19} and apply  
the Dinh-Sibony equidistribution theorem for meromorphic transforms \cite[Theorem 4.1]{DS06}.
Recall by \eqref{e:iso} that the spaces $H^0_{0,(2)}(\wih X|\wih Y,\wih L^p)),H^0_{0,(2)}(X|Y,L^p)$ are isometric. Using the notation from Section \ref{SS:FSpot}, we first show that 
Theorem \ref{T:speed} holds on $\wih Y$ for the spaces 
\[\wih\X^{\wih Y}_p:=\P H^0_{0,(2)}(\wih X|\wih Y,\wih L^p),\;\sigma_p=
\omega_\FS^{d_p},\;(\wih\X^{\wih Y}_\infty,\sigma_\infty):= 
\prod_{p=1}^\infty (\wih\X^{\wih Y}_p,\sigma_p),\;d_p=\dim\wih\X^{\wih Y}_p=\dim\X^Y_p,\] 
and the Fubini-Study currents $\wih\gamma^{\wih Y}_p$.
This is done exactly as in the proof of Theorem 6.1, Step 1, from \cite{CMN19} (see also \cite[Section 4]{CMN16}), by applying \cite[Theorem 4.1]{DS06} to the Kodaira maps considered as meromorphic transforms of codimension $m-1$, 
$\Phi_p:\wih Y\dashrightarrow\P H^0_{0,(2)}(\wih X|\wih Y,\wih L^p)$, with graph 
\[\Gamma_p=\big\{(y,\hat s)\in\wih Y\times\P H^0_{0,(2)}(\wih X|\wih Y,\wih L^p):\,\hat s(y)=0\big\}.\]
Note that Siegel's lemma implies that $d_p=O(p^m)$.

We next show that Theorem \ref{T:speed} holds on $Y$ for the spaces $\X^Y_p$. Consider the restriction $\wih\pi:=\wih\pi|_{\wih Y}:\wih Y\to Y$. By Lemma \ref{L:embdes2} $\wih\pi:\wih Y\setminus(\wih E_\pi\cup\wih E)\to Y\setminus Z$ is a biholomorphism, and by \eqref{e:iso} $S\in H^0_{0,(2)}(X|Y,L^p)\to\wih\pi^\star S\in H^0_{0,(2)}(\wih X|\wih Y,\wih L^p)$ is an isometry. Using \eqref{e:wiFS} and \eqref{e:wiFSpot} we obtain  
\[\frac{1}{p}\,\wih\gamma^{\wih Y}_p=
\wih\alpha|_{\wih Y}+dd^c\wih\varphi^{\wih Y}_p=
\wih\pi^\star\alpha+dd^c(\varphi^Y_p\circ\wih\pi).\]
Since $\varphi^Y_p\in L^1(Y,\omega^m|_Y)$ and 
$\wih\varphi^{\wih Y}_p=\varphi^Y_p\circ\wih\pi\in L^1(\wih Y,\wih\omega^m|_{\wih Y})$
we infer that $\wih\pi_\star\wih\gamma^{\wih Y}_p=\gamma^Y_p$ as currents on $Y$. 
Similarly we can show that $\wih\pi_\star[\wih\pi^\star S=0]=[S=0]$ as currents on $Y$, 
for $S\in H^0_{0,(2)}(X|Y,L^p)$. Theorem \ref{T:speed} now follows from the above 
considerations by arguing as in the proof of Theorem 6.1, Step 2, from \cite{CMN19}.
\end{proof}

\begin{proof}[Proof of Theorem \ref{T:zrhs}] 
Theorem \ref{T:zrhs} follows easily from Theorems \ref{T:FSpot} and \ref{T:speed}, by 
proceeding as in the proof of \cite[Theorem 1.10]{CMN19}.
\end{proof}

\begin{Remark}\label{R:A}
Assume that $X,L,\Sigma,\tau$ verify (A)-(D), $X$ is smooth and $\dim\Sigma_j=n-1$, 
$1\leq j\leq\ell$. Then Theorems \ref{T:FSpot} and \ref{T:zrhs} hold for any analytic 
subset $Y\subset X$ that verifies assumption (E*) from Proposition \ref{P:A}.
\end{Remark}


\end{document}